\documentclass[12pt,a4paper,reqno]{amsart}
\usepackage{amsmath,amsthm,amssymb,mathtools,enumitem,setspace,multirow,diagbox}
\usepackage{a4wide}
\usepackage[margin=2cm]{geometry}

\usepackage[pagebackref=true,colorlinks=true,citecolor=blue,linkcolor=red,urlcolor=black]{hyperref}

\title[The non-commuting, non-generating graph of a non-simple group]{The non-commuting, non-generating graph\\of a non-simple group}

\author{Saul D. Freedman}
\address{ 
{School of Mathematics and Statistics, University of St Andrews, St Andrews, KY16 9SS, UK}}
\email{sdf8@st-andrews.ac.uk}

\newtheorem{thm}{Theorem}[section]
\newtheorem*{thm*}{Theorem}
\newtheorem*{conj*}{Conjecture}

\newtheorem{lem}[thm]{Lemma}
\newtheorem{ques}[thm]{Question}
\newtheorem{prop}[thm]{Proposition}
\newtheorem*{rem*}{Remark}
\theoremstyle{remark}

\theoremstyle{definition}
\newtheorem{rem}[thm]{Remark}
\newtheorem{eg}[thm]{Example}
\newtheorem{assump}[thm]{Assumption}

\newcounter{claim}[thm]

\numberwithin{equation}{section}

\newcommand\diam{\mathrm{diam}}
\newcommand\sd{\mkern1.5mu{:}\mkern1.5mu}

\newcommand\nc{\Xi}
\newcommand\nd{\Xi^+}

\makeatletter
\newcommand{\slteq}{
  \mathrel{\mathpalette\sl@unlhd\relax}
}

\newcommand{\sl@unlhd}[2]{
  \sbox\z@{$#1\lhd$}
  \sbox\tw@{$#1\leqslant$}
  \dimen@=\ht\tw@
  \advance\dimen@-\ht\z@
  \ifx#1\displaystyle
    \advance\dimen@ .2pt
  \else
    \ifx#1\textstyle
      \advance\dimen@ .2pt
    \fi
  \fi
  \ooalign{\raisebox{\dimen@}{$\m@th#1\lhd$}\cr$\m@th#1\leqslant$\cr}
}
\makeatother

\renewcommand{\le}{\leqslant}
\renewcommand{\ge}{\geqslant}
\renewcommand{\trianglelefteq}{\slteq}

\begin{document}

\onehalfspacing

\begin{abstract}
Let $G$ be a (finite or infinite) group such that $G/Z(G)$ is not simple. The non-commuting, non-generating graph $\nc(G)$ of $G$ has vertex set $G \setminus Z(G)$, with vertices $x$ and $y$ adjacent whenever $[x,y] \ne 1$ and $\langle x, y \rangle \ne G$. We investigate the relationship between the structure of $G$ and the connectedness and diameter of $\nc(G)$. In particular, we prove that the graph either: (i) is connected with diameter at most $4$; (ii) consists of isolated vertices and a connected component of diameter at most $4$; or (iii) is the union of two connected components of diameter $2$. We also describe in detail the finite groups with graphs of type (iii). In the companion paper \cite{simplepaper}, we consider the case where $G/Z(G)$ is finite and simple.
\end{abstract}

\maketitle

\section{Introduction}
\label{sec:intro}

Given a group $G$, knowledge about the generating pairs of $G$ and their statistics have found a vast number of applications throughout abstract and computational group theory. Similarly, it is useful to gain information about the pairs of elements of $G$ that \emph{do not} generate the group. If $G$ is non-abelian, then these clearly include all commuting pairs, and so our interest lies in the remaining non-generating pairs.

Information about these pairs of elements is encoded in the \emph{non-commuting, non-generating} graph $\nc(G)$ of $G$, which has vertex set $G \setminus Z(G)$, with two vertices $x$ and $y$ adjacent if and only if $[x,y] \ne 1$ and $\langle x, y \rangle \ne G$. Note that the central elements of $G$ are excluded from the graph's vertex set for convenience, as otherwise they would always be isolated. Aside from this redefinition of the vertex set, the graph $\nc(G)$ is the difference between two consecutive graphs in Cameron's \cite[\S2.6]{camerongraphs} hierarchy of graphs defined on the elements of a group, namely, the \emph{non-generating graph} and the \emph{commuting graph}. The same is true for the \emph{generating graph}, which was introduced in \cite{liebeckshalev2gen}, and for certain graphs studied in \cite{aalipour,powerdiff}.

Many authors (e.g.~\cite{bgk,burnessspread,crestani}) have studied the generating graph, and in particular its connectedness and diameter. For example, Burness, Guralnick and Harper \cite{burnessspread} recently showed that that the generating graph of a finite group is connected if and only if its diameter is at most $2$, and that this occurs precisely when every proper quotient of the group is cyclic.

In \cite{nilppaper}, we showed that the graph $\nd(G)$ induced by the non-isolated vertices of $\nc(G)$ has a similarly small diameter whenever the (finite or infinite) group $G$ is nilpotent, and more generally, when every maximal subgroup of $G$ is normal. In particular, we proved the following theorem, together with detailed structural relationships between $G$ and $\nc(G)$ in the finite case. Note that, in general, $\nc(G)$ has no edge precisely when every proper subgroup of $G$ is abelian.

\begin{thm}[{\cite[Theorem 13]{nilppaper}}]
\label{thm:nilpncgeneral}
Let $G$ be a group with every maximal subgroup normal. If $\nc(G)$ has an edge, then $\nd(G)$ is connected with diameter $2$ or $3$. Moreover, if $\nd(G)$ has diameter $3$, then $\nc(G) = \nd(G)$.
\end{thm}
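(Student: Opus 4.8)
The plan is to analyze adjacency through the Frattini quotient $\overline{G} = G/\Phi(G)$. Since every maximal subgroup of $G$ is normal, each has prime index and $G' \le \Phi(G)$, so $\overline{G}$ is abelian; moreover, for $x,y \in G$ one has $\langle x, y\rangle = G$ if and only if $\langle \overline{x}, \overline{y}\rangle = \overline{G}$, using that the elements of $\Phi(G)$ are non-generators (the degenerate infinite case $\Phi(G) = G$, where $G$ has no maximal subgroups and no pair generates $G$, I would treat separately). Thus adjacency of a non-commuting pair $x,y$ is controlled entirely by whether their images generate the \emph{abelian} group $\overline{G}$. I would first record that $\nc(G)$ has an edge exactly when some proper subgroup is non-abelian, and fix such a subgroup for later use.

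The argument then splits according to $d := d(\overline{G})$, the minimal number of generators of $\overline{G}$. If $d \ge 3$, then no pair generates $\overline{G}$, hence none generates $G$, so the non-generating condition is vacuous and $\nc(G)$ coincides with the non-commuting graph on $G \setminus Z(G)$; here every non-central vertex has a (necessarily non-central) non-commuting partner, so $\nd(G) = \nc(G)$, and the standard fact that a group is never the union of two proper centralizers $C_G(x) \cup C_G(y)$ produces a common neighbour of any two vertices, giving diameter at most $2$. If $d \le 1$ then $G$ is cyclic and $\nc(G)$ is empty, so the substantive case is $d = 2$, where $\overline{G}$ is non-cyclic.

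For $d = 2$ the key reduction is: if $M$ is a (normal, prime-index) maximal subgroup and $u,v \in M \setminus Z(G)$ satisfy $[u,v]\ne 1$, then $\langle u,v\rangle \le M \ne G$, so $u \sim v$; thus inside a single maximal subgroup the generating obstruction disappears and $\nc(G)$ restricts to a non-commuting graph. The plan is therefore to (a) connect vertices lying in a common $M$ via the union-of-centralizers trick inside $M$, and (b) connect distinct maximal subgroups through their common part. The natural hubs are the non-central elements of $\Phi(G)$: for $z \in \Phi(G)$ one has $\overline{z} = 1$, so $\langle \overline{z}, \overline{y}\rangle = \langle \overline{y}\rangle \ne \overline{G}$, whence $z$ is non-generating with every element and adjacent to everything it fails to centralise, while lying in every maximal subgroup. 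When $\Phi(G) \not\le Z(G)$ I would route paths through these hubs; when $\Phi(G) \le Z(G)$ the group is nilpotent of class at most $2$, with $\overline{G}$ governing both the commuting and the generating pattern, and I would argue directly that the non-isolated vertices share enough common structure to lie within distance $3$.

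The main obstacle is the bookkeeping in the case $d = 2$, and in particular the ``moreover'' clause. To handle it I would show that an isolated vertex $x$ imposes a severe restriction: every element not centralising $x$ must generate $G$ together with $x$, which forces $\overline{G}$ to have rank exactly $2$ of a specific elementary type and pins down the commuting/generating pattern of the remaining vertices. I expect this to collapse the non-isolated part into a configuration of diameter $2$, so that diameter $3$ can occur only when no vertex is isolated, i.e.\ when $\nc(G) = \nd(G)$. Verifying that this restriction genuinely bounds the diameter by $2$ across all the arithmetic possibilities for $\overline{G}$, and for the position of $\Phi(G)$ relative to $Z(G)$, is the part I expect to demand the most careful case analysis.
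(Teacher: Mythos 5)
Your Frattini-quotient reduction and the degenerate cases are fine: if no pair of elements generates $G$ (in particular if $d(G/\Phi(G)) \ge 3$), then $\nc(G)$ is the non-commuting graph and has diameter $2$, and non-commuting pairs inside one normal maximal subgroup are automatically adjacent (this is Proposition~\ref{prop:ncbasics}\ref{propernc}). But the whole theorem lives in your case $d=2$, and there the proposal has a genuine gap. The difficulty is concentrated at vertices $x \in Z(M) \setminus Z(G)$ for a \emph{non-abelian} normal maximal subgroup $M$: then $C_G(x) = M$, $G/Z(M)$ is non-cyclic, and (as in Lemma~\ref{lem:zneighbs}) the neighbourhood of $x$ is exactly $G \setminus M$. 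Since $\Phi(G) \le M = C_G(x)$, no Frattini ``hub'' is ever adjacent to such an $x$, and there are no hubs at all when $\Phi(G) \le Z(G)$; so your routing scheme does not reach the pairs that actually realise distance $3$. Concretely, if $y \in M \setminus Z(M)$ and $M$ is the only maximal subgroup containing but not centralising $y$, then every neighbour of $y$ lies in $M$ and hence is a non-neighbour of $x$, so $d(x,y) \ge 3$; establishing $d(x,y) \le 3$ requires producing a neighbour $w$ of $y$ inside $K \cap M$ for a second maximal subgroup $K$, chosen outside both $C_{K\cap M}(y)$ and $Z(K)\cap M$ so that $w$ in turn has a neighbour outside $M$. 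That construction, together with the exact characterisation of when it fails, is the content of the normal--normal case of Lemma~\ref{lem:ncmaxnormcombined} (i.e.\ of \cite[Lemma 12]{nilppaper}), and it is the step your proposal does not supply.

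The ``moreover'' clause is a second gap of the same kind. You plan to extract it from arithmetic constraints that an isolated vertex places on the abelian group $G/\Phi(G)$, but that quotient cannot see the invariant that actually decides the matter, namely which maximal subgroups of $G$ are abelian. The mechanism used in the source is: an isolated vertex forces some (normal) maximal subgroup to be abelian (Propositions~\ref{prop:ncbasics}\ref{isolvertnc} and~\ref{prop:isolabmax}), whereas a pair at distance $3$ forces a normal non-abelian maximal subgroup $M$ with $Z(G) < Z(M)$, which by Proposition~\ref{prop:nomaxab} forces \emph{every} maximal subgroup to be non-abelian; the two conclusions are incompatible, so diameter $3$ implies $\nc(G) = \nd(G)$. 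Groups with identical Frattini quotients can differ on whether they possess an abelian maximal subgroup, so no amount of case analysis on $G/\Phi(G)$ alone will recover this dichotomy. To repair the proposal you would need to import (or reprove) the characterisation of distance-$3$ pairs and the abelian/non-abelian maximal subgroup dichotomy, at which point you are essentially following the paper's route rather than a Frattini-quotient one.
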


In this paper, we extend the results of \cite{nilppaper} to the case where $G/Z(G)$ is an arbitrary non-simple group, via the following theorem. Here, an abstract group is \emph{primitive} if it has a core-free maximal subgroup $H$ (so that $G$ acts faithfully and primitively on the right cosets of $H$, which is a point stabiliser for the action). Finally, we denote the diameter of a graph $\Gamma$ by $\diam(\Gamma)$.

\begin{thm}
\label{thm:ncmainsummary}
Let $G$ be a group such that $\overline G := G/Z(G)$ is not simple and $\nc(G)$ has an edge. Then (at least) one of the following holds.
\begin{enumerate}[label={(\roman*)},font=\upshape]
\item \label{ncmainsummary3} $\nc(G)$ has an isolated vertex, and $\diam(\nd(G)) = 2$. If $\overline G$ has a proper non-cyclic quotient, then $G$ is soluble.
\item \label{ncmainsummary4} $\nc(\overline G)$ has an isolated vertex, and $\diam(\nd(G)) \in \{2,3,4\}$. Additionally, $\overline G$ is an infinite, insoluble primitive group with every proper quotient cyclic.
\item \label{ncmainsummary5} $\nc(G)$ is connected with diameter $2$ or $3$.
\item \label{ncmainsummary6} $\nc(G)$ is connected with diameter $4$, $G$ is infinite, and $\overline G$ has a proper non-cyclic quotient.
\item \label{ncmainsummary7} $\nc(G)$ is the union of two connected components of diameter $2$. 
\end{enumerate}
\end{thm}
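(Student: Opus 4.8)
The plan is to build the graph's long-range connectivity from a supply of \emph{hub} vertices coming from proper normal subgroups, and then to isolate the exceptional configurations. Write $Z = Z(G)$ and fix a normal subgroup $N$ of $G$ with $Z \le N < G$ and $N/Z \ne 1$, which exists because $\overline G$ is not simple. The first and most important observation is a hub lemma: if $G/N$ is non-cyclic, then for every $z \in N \setminus Z$ we have $\langle z, g\rangle \le \langle N, g\rangle < G$ for all $g \in G$, since $\langle N,g\rangle/N = \langle gN\rangle$ is a proper (cyclic) subgroup of the non-cyclic group $G/N$. Hence such a $z$ is adjacent in $\nc(G)$ to every non-central element it fails to commute with; that is, $z$ dominates its entire non-commuting neighbourhood. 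I would record the analogous statement for $C_G(N)$, which is itself a proper normal subgroup containing $Z$, so that whenever $G/C_G(N)$ is non-cyclic the elements of $C_G(N)\setminus Z$ are hubs as well.

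The main dichotomy is according to whether $\overline G$ has a proper non-cyclic quotient, equivalently whether some proper normal $N \ge Z$ has $G/N$ non-cyclic. In the first branch I would fix such an $N$ and set $D = C_G(N)$, noting $Z \le D < G$. A standard covering argument shows that if $x, y \in G\setminus D$ then $C_N(x)$ and $C_N(y)$ are proper subgroups of $N$, so $N \ne C_N(x)\cup C_N(y)$ and some hub $z\in N\setminus Z$ avoids both centralisers, giving a path $x - z - y$ of length $2$. Thus all vertices outside $D$ lie in a single component of radius at most $2$ around the hubs, and the induced graph on $N\setminus Z$ (where adjacency reduces to non-commutativity) glues everything together with diameter at most $4$; this yields outcomes (iii) and (iv). The vertices of $D \setminus Z$ are the only possible obstructions: each commutes with every hub in $N\setminus Z$, so it is isolated or connects only through a second family of hubs. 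Analysing when a vertex of $D\setminus Z$ becomes isolated — so that every non-commuting partner generates $G$ with it — forces severe structural constraints on $N$, $G/N$ and $D$, and pushing these through while excluding non-abelian simple sections is where I would prove the solubility assertion of (i) and locate the two-component case (v).

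In the second branch every proper quotient of $\overline G$ is cyclic. Since $\nc(G)$ has an edge, $G$ is non-abelian, and $\overline G$ is then a non-cyclic, non-simple, monolithic group whose monolith lies in its derived subgroup. Here I would first dispose of the case in which every maximal subgroup of $G$ is normal by invoking Theorem \ref{thm:nilpncgeneral}, which already gives $\nd(G)$ connected of diameter $2$ or $3$, and hence outcome (iii) once isolated vertices are accounted for. Otherwise $G$ has a core-free maximal subgroup, so $\overline G$ is primitive, and I would transfer the problem to $\nc(\overline G)$: the quotient map is a graph morphism away from the central fibres, and the monolithic primitive structure controls which pairs generate. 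The remaining work is to show that the surviving component has diameter at most $4$ and to single out the genuinely infinite, insoluble primitive examples, giving outcome (ii); for finite $G$ in this branch the stronger bounds of (iii) and the enumeration underlying (v) should then follow.

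I expect the principal obstacle to be this second branch, and within it the infinite insoluble primitive groups of outcome (ii). For these the hub lemma is unavailable — no proper normal subgroup has non-cyclic quotient — so connectivity must be extracted from the primitive action and the monolithic structure rather than from a dominating vertex, and one must control the possibly large set of generating pairs to keep the diameter from exceeding $4$. Establishing the exact diameter bounds in (ii), proving the solubility statement in (i) under a proper non-cyclic quotient, and completing the finite classification underlying (v) are the steps I would expect to require the most delicate, case-by-case work.
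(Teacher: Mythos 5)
Your proposal follows essentially the same route as the paper: the same top-level dichotomy on whether $\overline G$ has a proper non-cyclic quotient, the same hub mechanism (elements of a normal subgroup $N$ with $G/N$ non-cyclic dominate their non-commuting neighbourhoods, which is the paper's Lemma~\ref{lem:noncycedge} together with Lemma~\ref{lem:gnbasefacts}), the same identification of $C_G(N)$ as the locus of all obstructions, and the same trichotomy (all maximal subgroups normal / soluble primitive / insoluble primitive) in the cyclic-quotient branch, resolved via Theorem~\ref{thm:nilpncgeneral} and a transfer to $\nc(\overline G)$. The parts you flag as needing delicate case-by-case work are exactly where the paper's effort lies (Proposition~\ref{prop:gccycab}, Lemma~\ref{lem:gccycnonab}, and Lemmas~\ref{lem:ncimprimcyc}--\ref{lem:ncinsprimquo}), so your outline is a faithful skeleton of the actual argument.
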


Hence each component of $\nc(G)$ has diameter at most $4$, and if the graph has more than one nontrivial component (i.e., containing at least two vertices), then $\nc(G)$ is the union of two components of diameter $2$. The former restriction distinguishes $\nc(G)$ from the generating graph of a non-simple finite group, where a component can have arbitrarily large diameter \cite[Theorem 1.3]{crestani}. On the other hand, if $G$ is finite and soluble, then the subgraph of the generating graph induced by its non-isolated vertices has diameter at most $3$ \cite[Theorem 1]{lucchinisoluble}. The same holds for $\nc(G)$, unless Theorem~\ref{thm:ncmainsummary}\ref{ncmainsummary7} applies. We also show in \cite[Proposition 5.9.9]{saulthesis} that if $\overline G$ is finite, $G$ has an abelian maximal subgroup, and $\nc(G)$ has an edge, then $\diam(\nd(G)) = 2$.

Note that it is an open problem to determine whether cases (ii) and (iv) can occur; see Questions~\ref{ques:xz}, \ref{ques:xy} and \ref{ques:imprimnc}, and Remark~\ref{rem:dist4}. If $G$ does satisfy case (ii), then Lemma~\ref{lem:ncinsprimquo} yields further information about the structures of $\nc(G)$ and $\nc(\overline G)$.

In our companion paper \cite{simplepaper}, we explore the diameter of $\nc(G)$ when $G/Z(G)$ is finite and simple; in particular, we prove that $\nc(G)$ is always connected in this case. Hence each finite group $G$ with $\nd(G)$ not connected satisfies Theorem~\ref{thm:ncmainsummary}\ref{ncmainsummary7}. Our second main theorem precisely describes these finite groups. We write $\Phi(H)$ to denote the Frattini subgroup of a group $H$.

\begin{thm}
\label{thm:sumtwo}
Let $G$ be a finite group. Then $\nc(G)$ is the union of two connected components of diameter $2$ if and only if the following all hold:
\begin{enumerate}[label={(\roman*)},font=\upshape]
\item $G = P \sd Q$, where $P$ and $Q$ are nontrivial Sylow subgroups;
\item $Q$ is cyclic and acts irreducibly on $P/\Phi(P)$;
\item $\Phi(P) = Z(P) \not\le Z(G)$; and
\item the unique maximal subgroup of $Q$ is normal in $G$.
\end{enumerate}
\end{thm}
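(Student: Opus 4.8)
The plan is to prove the two implications separately: sufficiency of (i)--(iv) is a direct computation, while necessity is the substantial half and must be extracted from the structural analysis underlying Theorem~\ref{thm:ncmainsummary}. For sufficiency I would first record two consequences of the hypotheses that make the components transparent. Since $P \trianglelefteq G$, a standard result gives $\Phi(P) \le \Phi(G)$, so by (iii) we have $Z(P) = \Phi(P) \le \Phi(G)$; and condition (iv) forces $[P,Q_0] \le P \cap Q_0 = 1$, whence $Q_0$ centralises $G$ and $Q_0 \le Z(G) \cap \Phi(G)$. Consequently the unique maximal subgroup of $G$ containing $P$ is $M := P \times Q_0$, it is normal of prime index $q$, and $Z(M) = Z(P) \times Q_0$, which by (iii) is not contained in $Z(G)$.

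I would then show that $\nc(G) = \Delta_1 \sqcup \Delta_2$ with $\Delta_1 = M \setminus Z(M)$ and $\Delta_2 = (G \setminus M) \cup (Z(M)\setminus Z(G))$. On $\Delta_1$ the induced graph is exactly the non-commuting graph of the nonabelian group $M$, since no two elements of $M$ generate $G$; this is well known to be connected of diameter $2$. For $\Delta_2$, every $z \in Z(M)\setminus Z(G)$ lies in $\Phi(G)$ and fails to centralise any $g \in G \setminus M$ --- here one uses that $g$ projects to a generator of $Q/Q_0 \cong C_q$ while $C_Q(z) \le Q_0$ --- so $z$ is adjacent to all of $G \setminus M$, and this complete join makes $\Delta_2$ connected of diameter $2$. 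The crux of sufficiency is the absence of edges between $\Delta_1$ and $\Delta_2$. As $z \in Z(M)\setminus Z(G)$ commutes with $M \supseteq \Delta_1$, the only danger is a pair $x \in M \setminus Z(M)$, $g \in G \setminus M$. Writing $x = at$ with $a \in P \setminus Z(P)$ and $t \in Q_0 \le \Phi(G)$, I would pass to $G/\Phi(P) = (P/\Phi(P)) \rtimes Q$ and observe that $\langle \bar a, \bar g\rangle \cap (P/\Phi(P))$ is $Q$-invariant and nonzero; irreducibility (condition (ii)) forces it to equal $P/\Phi(P)$, so $\langle x, g\rangle \Phi(P) = G$, and since $\Phi(P) \le \Phi(G)$ we obtain $\langle x, g\rangle = G$. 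Thus $x$ and $g$ are non-adjacent, completing this direction.

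For necessity I would begin from the observation, noted after Theorem~\ref{thm:ncmainsummary}, that a finite group with disconnected graph falls under case~\ref{ncmainsummary7}; applying the methods that prove that theorem I would first deduce that $G$ is soluble and that its order involves only two primes, so that $G = P \sd Q$ with $P,Q$ Sylow, giving (i). Exactly one component, say $\Delta_1$, then generates a proper subgroup, and I would set $M := \langle \Delta_1 \rangle$ and show that $M$ is a normal maximal subgroup of prime index $q$ with $\Delta_1 = M \setminus Z(M)$, and that the normal Sylow $p$-subgroup is $P$. Cyclicity of $Q$ and condition (iv) would then follow from the complete-join structure of the dominating vertices $Z(M)\setminus Z(G)$ inside $\Delta_2$, which pins down $G/M \cong C_q$ and forces $Q_0$ to be central; and the requirement that no element of $P \setminus Z(P)$ be adjacent to an element outside $M$ reverses the computation of the previous paragraph to yield both irreducibility (ii) and $\Phi(P) = Z(P)$. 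Finally, $Z(P) \not\le Z(G)$ is precisely the statement that $\Delta_2 \neq \emptyset$, i.e.\ that the graph genuinely has two components.

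I expect the main obstacle to lie in the necessity direction, specifically in manufacturing the normal Sylow subgroup $P$ and the cyclic complement $Q$ --- and in particular the restriction to two prime divisors --- from the purely combinatorial hypothesis of ``two components of diameter~$2$''. Converting the separation of the components into the rigid arithmetic statement that $G$ has the stated $\{p,q\}$-form, rather than merely that $G$ is soluble with some normal maximal subgroup, is where the detailed case analysis behind Theorem~\ref{thm:ncmainsummary} (together with the companion fact that $\nc(G)$ is connected when $G/Z(G)$ is simple) must be brought to bear; once the Sylow structure is in hand, the verification of (ii)--(iv) is essentially the inverse of the sufficiency computation.
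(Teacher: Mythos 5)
Your sufficiency argument is correct and takes a genuinely more direct route than the paper, which obtains this implication by chaining Proposition~\ref{prop:22grouplink} through Lemma~\ref{lem:gccycnonab}\ref{gccycnonab1}, which in turn rests on Lemma~\ref{lem:maxnormconnected} and Propositions~\ref{prop:xzdist} and~\ref{prop:xydist}. Your direct verification --- $R \le Z(G)$, $M = P \times R$ with $Z(M) = \Phi(P) \times R = \Phi(G)$, the dominating vertices in $Z(M) \setminus Z(G)$, and the passage to $G/\Phi(P)$ where irreducibility forces $\langle x, g \rangle \Phi(P) = G$ for $x \in M \setminus Z(M)$ and $g \notin M$ --- is self-contained and avoids that machinery entirely. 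This is a worthwhile simplification of one half of the theorem.

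The necessity direction, however, has a genuine gap. You correctly identify that the hard point is extracting the arithmetic structure $G = P \sd Q$ (two prime divisors, $Q$ cyclic, irreducible action) from the combinatorial hypothesis, but you then say only that ``the detailed case analysis behind Theorem~\ref{thm:ncmainsummary}'' must be brought to bear, without naming the mechanism that actually produces this structure. The case analysis of Theorem~\ref{thm:ncmainsummary} by itself yields much less: it shows that disconnectedness forces $C := C_G(N)$ to be a normal, non-abelian maximal subgroup with $K \cap C = Z(C)$ for \emph{every} other maximal subgroup $K$. The step from there to conditions (i)--(ii) is not graph-theoretic at all: one deduces that $Z(C) = \Phi(G)$ is the intersection of every pair of distinct maximal subgroups (Lemma~\ref{lem:twoconjclass}), whence $G$ is soluble --- via Shi's classification of insoluble groups whose maximal subgroups have at most two orders --- and has exactly two conjugacy classes of maximal subgroups by Ore's theorem (Theorem~\ref{thm:finsolconjmax}); Adnan's classification of such groups (Theorem~\ref{thm:solmaxclass}\ref{solmaxclass1}) then delivers $G = P \sd Q$ with $Q$ cyclic acting irreducibly on $P/\Phi(P)$. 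None of these external inputs appears in your sketch, and your proposed substitutes are unsupported: ``exactly one component generates a proper subgroup'' is asserted without proof, and ``cyclicity of $Q$ \ldots would follow from the complete-join structure'' does not hold --- cyclicity and irreducibility come from the two-conjugacy-class classification, not from the adjacency structure of $\Delta_2$. (Condition (iv) does follow once the classification is in hand, via Theorem~\ref{thm:solmaxclass}\ref{solmaxclass2}\ref{solmaxclass2b}, since $\Phi(G) = M \cap \Phi(P)Q$; and $\Phi(P) = Z(P)$ then drops out of $Z(M) = \Phi(G)$. But these are downstream of the missing step.) You also correctly note the reliance on the companion paper and on Theorem~\ref{thm:ncmainsummary} to rule out the quotient-cyclic cases; that part of the reduction is fine.
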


We will observe in Theorem~\ref{thm:solmaxclass} below (see also \cite{adnan}) that conditions (i)--(ii) of Theorem~\ref{thm:sumtwo} hold if and only if the finite group $G$ has exactly two conjugacy classes of maximal subgroups.

The paper is organised as follows. In \S\ref{sec:prelims}, we present preliminary results on maximal subgroups of $G$ and on $\nc(G)$. Next, \S\ref{sec:twoconjmax} focuses on groups whose maximal subgroups satisfy certain conditions, including finite groups with exactly two conjugacy classes of maximal subgroups. In \S\ref{sec:normmaxzm}, we bound distances in $\nc(G)$ when $G$ has a normal, non-abelian maximal subgroup $M$ with $Z(G) < Z(M)$. These bounds are applied in \S\ref{sec:normalcent}, where we prove Theorem~\ref{thm:ncmainsummary} and \ref{thm:sumtwo} when $G/Z(G)$ has a proper non-cyclic quotient. We then complete the proof of our main theorems in \S\ref{sec:finitesol} by considering the remaining groups. In \S\ref{sec:normalcent}--\ref{sec:finitesol}, we also exhibit structural relationships between $G$ and $\nc(G)$ using concrete examples, many of which involve Magma \cite{magma} computations. 

\section{Preliminaries}
\label{sec:prelims}

In this section, we state several preliminary results related to maximal subgroups of a group $G$ and its non-commuting, non-generating graph $\nc(G)$. Given vertices $x$ and $y$ of a graph, $d(x,y)$ denotes their distance in the graph, and we write $x \sim y$ if the vertices are adjacent.

Throughout this paper, we will implicitly use the following proposition.

\begin{prop}[{\cite[Proposition 2.1.1]{cohn}}]
\label{prop:fingenmaxsub}
Suppose that $G$ is finitely generated, and let $H$ be a proper subgroup of $G$. Then $H$ is contained in a maximal subgroup of $G$.
\end{prop}

\begin{thm}[{\cite[Theorems I.4, IV.11, and IV.14]{ore}}]
\label{thm:finsolconjmax}
Suppose that $G$ is finite and soluble, and let $L$ and $M$ be distinct maximal subgroups of $G$. Then the following are equivalent:
\begin{enumerate}[label={(\roman*)},font=\upshape]
\item $L$ and $M$ are conjugate in $G$;
\item $\mathrm{Core}_G(L) = \mathrm{Core}_G(M)$; and
\item $LM \ne G$.
\end{enumerate}
\end{thm}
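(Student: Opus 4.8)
The plan is to establish the cycle of implications (i) $\Rightarrow$ (iii) $\Rightarrow$ (ii) $\Rightarrow$ (i); the implication (i) $\Rightarrow$ (ii) then comes for free. Solubility will be invoked only in passing between equality of cores and conjugacy, where it is genuinely indispensable: the equivalence (i) $\Leftrightarrow$ (ii) fails for $A_5$, whose core-free maximal subgroups split into three non-conjugate classes.

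First I would dispatch the two implications that use no solubility. For (iii) $\Rightarrow$ (ii) I argue contrapositively. If $\mathrm{Core}_G(L) \neq \mathrm{Core}_G(M)$, then we cannot have both $\mathrm{Core}_G(M) \le L$ and $\mathrm{Core}_G(L) \le M$: since each core is normal in $G$, these inclusions would give $\mathrm{Core}_G(M) \le \mathrm{Core}_G(L)$ and $\mathrm{Core}_G(L) \le \mathrm{Core}_G(M)$, forcing equality. So, say, $\mathrm{Core}_G(M) \not\le L$; maximality of $L$ then gives $L\,\mathrm{Core}_G(M) = G$, and as $\mathrm{Core}_G(M) \le M$ we obtain $LM = G$, i.e.\ the negation of (iii). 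The implication (i) $\Rightarrow$ (ii) is immediate, because the core of $L$ is invariant under conjugating $L$.

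The substance lies in (ii) $\Rightarrow$ (i). Writing $C := \mathrm{Core}_G(L) = \mathrm{Core}_G(M)$ and passing to $\overline{G} := G/C$, the images $\overline{L} := L/C$ and $\overline{M} := M/C$ are \emph{core-free} maximal subgroups, so $\overline{G}$ is a finite soluble primitive group. I would invoke the structure theory of such groups: $\overline{G}$ has a unique minimal normal subgroup $N$, which is an elementary abelian, self-centralising $p$-group, and every core-free maximal subgroup $\overline{K}$ is a complement to $N$. Indeed $N \not\le \overline{K}$ (else $N \le \mathrm{Core}_{\overline G}(\overline K) = 1$), so maximality gives $N\overline{K} = \overline{G}$, while $N \cap \overline{K}$, being normalised by both $N$ and $\overline{K}$, is normal in $\overline{G}$, whence $N \cap \overline{K} = 1$ by minimality of $N$. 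The key input, and the main obstacle, is that all these complements are conjugate in $\overline{G}$; this is precisely the soluble-group fact underlying the theorem. Granting it, $\overline{L}$ and $\overline{M}$ are conjugate in $\overline{G}$, and since $C \le L \cap M$ this conjugacy lifts to $G$, yielding (i).

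Finally, for (i) $\Rightarrow$ (iii) I descend again to $\overline{G} = N \rtimes \overline{L}$. As $\overline{G} = \overline{L}N$, any conjugate of $\overline{L}$ equals $\overline{L}^{\,n}$ for some $n \in N$, so I may write $\overline{M} = \overline{L}^{\,n}$ with $n \in N \setminus \{1\}$ (nontrivial since $\overline{M} \neq \overline{L}$). A short computation in the semidirect product identifies $\overline{L} \cap \overline{M}$ with the stabiliser $C_{\overline{L}}(n)$ of $n$ under the conjugation action of $\overline{L}$ on $N$; its index in $\overline{L}$ is the length of the orbit $n^{\overline{L}}$, which lies in $N \setminus \{1\}$ and so is at most $|N| - 1$. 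Since $\overline{L}$ and $\overline{M}$ have equal order, the product formula gives $|\overline{L}\,\overline{M}| = |\overline{L}| \cdot [\overline{L} : C_{\overline{L}}(n)] = |\overline{L}| \cdot |n^{\overline{L}}| \le |\overline{L}|(|N| - 1) < |N|\,|\overline{L}| = |\overline{G}|$, so $\overline{L}\,\overline{M} \neq \overline{G}$ and hence $LM \neq G$. I expect the structure-theoretic conjugacy of complements in the third step to be the one genuinely hard ingredient; the remaining arguments are elementary counting together with the normality of cores.
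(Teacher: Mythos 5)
The paper does not prove this theorem at all: it is quoted verbatim from Ore's paper (the bracketed citation \cite[Theorems I.4, IV.11, and IV.14]{ore} is the entire ``proof''), so any honest argument you give is necessarily a different route, and yours is a reasonable one. Your implications (iii)$\Rightarrow$(ii) and (i)$\Rightarrow$(ii) are correct and complete, and your reduction of (ii)$\Rightarrow$(i) to the primitive quotient $G/\mathrm{Core}_G(L)$ is the standard one. The one place where the proposal is not yet a proof is exactly the place you flag: after passing to $\overline{G}=G/C$, the implication (ii)$\Rightarrow$(i) \emph{is} the assertion that the core-free maximal subgroups of a finite soluble primitive group form a single conjugacy class, i.e.\ that all complements of the self-centralising minimal normal subgroup $N$ are conjugate. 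Granting that fact is therefore granting essentially all of the content of this direction; a complete write-up must either cite it (it is Galois's theorem on soluble primitive groups, e.g.\ Doerk--Hawkes, \emph{Finite Soluble Groups}, A.15.2 and A.15.6) or prove it, which is where the solubility of $\overline{G}/N$ genuinely enters (for insoluble primitive groups the statement fails, as your $A_5$ example shows). So: correct architecture, but the load-bearing wall is drawn in dotted lines.

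One simplification worth noting: your proof of (i)$\Rightarrow$(iii) via the semidirect-product computation is correct but far heavier than necessary. No group is the product of two distinct conjugates of a proper subgroup: if $G=L\,L^g$, write $g=l\,(g^{-1}l'g)$ with $l,l'\in L$; cancelling gives $1=l g^{-1} l'$, so $g\in L$ and $L^g=L$. This disposes of (i)$\Rightarrow$(iii) with no solubility, finiteness or maximality hypotheses, and makes the cycle (i)$\Rightarrow$(iii)$\Rightarrow$(ii)$\Rightarrow$(i) rest on the conjugacy-of-complements theorem alone.
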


\begin{lem}
\label{lem:nonabmax}
Let $(X,Y)$ be a pair of proper subgroups of $G$, with $X$ maximal and $Y \not\le X$. If $Z(X) \cap Y \not\le Z(G)$, then $Z(Y) \le X \cap Y$. If, in addition, $X$ is abelian, then $Z(Y) \le Z(G)$.
\end{lem}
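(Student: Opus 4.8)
The plan is to exploit the maximality of $X$ twice: once to identify $\langle X, Y \rangle$ with $G$, and once to pin down the centraliser of a suitable element. First I would observe that since $X$ is maximal and $Y \not\le X$, the subgroup $\langle X, Y \rangle$ properly contains $X$, and therefore $\langle X, Y \rangle = G$. This identity will be what lets me transfer centralising information between $X$ and $Y$ in the final step.

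The central step is the following observation. The hypothesis $Z(X) \cap Y \not\le Z(G)$ supplies an element $z \in Z(X) \cap Y$ with $z \notin Z(G)$. Because $z \in Z(X)$, the subgroup $X$ lies in $C_G(z)$; because $z \notin Z(G)$, the centraliser $C_G(z)$ is a proper subgroup of $G$. Maximality of $X$ then forces $C_G(z) = X$. This identification is the crux of the argument, and I expect it to be the only point requiring any care; everything else follows routinely.

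For the first conclusion, I would take an arbitrary $w \in Z(Y)$. Since $z \in Y$ and $w$ centralises $Y$, we have $[w,z] = 1$, so $w \in C_G(z) = X$. As $w \in Z(Y) \le Y$ as well, this gives $w \in X \cap Y$, and hence $Z(Y) \le X \cap Y$, as required.

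Finally, for the second conclusion, suppose in addition that $X$ is abelian, and again take $w \in Z(Y)$. By the first part $w \in X$, so $w$ centralises the abelian group $X$; moreover $w$ centralises $Y$ by definition of $Z(Y)$. Since $G = \langle X, Y \rangle$, it follows that $w$ centralises $G$, whence $w \in Z(G)$. Thus $Z(Y) \le Z(G)$, completing the argument.
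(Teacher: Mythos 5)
Your proposal is correct and follows essentially the same route as the paper's proof: the key step in both is that the element $z \in (Z(X)\cap Y)\setminus Z(G)$ forces $C_G(z) = X$ by maximality, giving $Z(Y) \le C_Y(z) = X\cap Y$, and the abelian case then follows since $Z(Y)$ centralises $\langle X, Y\rangle = G$.
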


\begin{proof}
Let $z \in (Z(X) \cap Y) \setminus Z(G)$. Then $X = C_G(z)$, and hence $Z(Y) \le C_Y(z) = X \cap Y$. If $X$ is abelian, then each element of $Z(Y)$ is centralised by $\langle X, Y \rangle = G$, and hence $Z(Y) \le Z(G)$.
\end{proof}

Our next result generalises an argument used in the proof of \cite[Proposition 10]{nilppaper}.

\begin{lem}
\label{lem:threemaxint}
Let $(W,X,Y)$ be a triple of distinct proper subgroups of $G$, with $X$ and $W \cap X$ normal in $G$, $X$ and $Y$ maximal in $G$, and $W \cap X \not\le Y$. Then $X \cap Y \not\le W$.
\end{lem}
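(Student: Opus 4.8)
The plan is to combine the maximality of $Y$ with the normality of $W \cap X$ to force a product decomposition of $G$, then transfer this to $X$ via Dedekind's modular law, and finally derive a contradiction from the maximality of $X$. The whole argument is short once the right decomposition is identified, so the real content is locating that decomposition.

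First I would note that, since $Y$ is maximal in $G$ and $W \cap X \not\le Y$, the subgroup $\langle W \cap X, Y \rangle$ properly contains $Y$ and hence equals $G$. Because $W \cap X$ is normal in $G$, the set product $(W \cap X)Y$ is itself a subgroup, so in fact $(W \cap X)Y = G$. Next, since $W \cap X \le X$, Dedekind's modular law (applied with $W \cap X \le X$ and the subgroup $Y$) gives $(W \cap X)(X \cap Y) = \bigl((W \cap X)Y\bigr) \cap X = G \cap X = X$. This factorisation $X = (W \cap X)(X \cap Y)$ is the crux of the proof.

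Finally I would argue by contradiction. Suppose $X \cap Y \le W$; since also $X \cap Y \le X$, this gives $X \cap Y \le W \cap X$, so substituting into the factorisation yields $X = (W \cap X)(X \cap Y) = W \cap X \le W$. But $X$ is maximal in $G$ while $W$ is a proper subgroup distinct from $X$, so $X \le W$ is impossible. Hence $X \cap Y \not\le W$, as required. I expect the main obstacle to be spotting the modular-law step: once $(W \cap X)Y = G$ is established, intersecting with $X$ to obtain the factorisation of $X$ is the decisive and least obvious move, and the normality hypothesis on $W \cap X$ is exactly what makes that set product a subgroup equal to $G$.
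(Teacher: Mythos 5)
Your proof is correct, but it takes a genuinely different route from the paper's. Both arguments begin identically, using the normality of $W \cap X$ and the maximality of $Y$ to obtain $(W \cap X)Y = G$. The paper then passes to quotients: assuming $X \cap Y \le W$, it computes $G/(W \cap X) \cong Y/(X \cap Y) \cong XY/X = G/X$ and derives a contradiction from the fact that $G/X$ is simple (as $X$ is a normal maximal subgroup) while $G/(W \cap X)$ is not (since $W \cap X < X < G$ with both containments normal). Your argument instead stays at the level of subgroups, using Dedekind's modular law to obtain the factorisation $X = (W \cap X)(X \cap Y)$ and collapsing it to $X \le W$ under the contradiction hypothesis, which violates the maximality of $X$ together with $W$ being proper and distinct from $X$. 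Your version is more elementary (no isomorphism theorems, no appeal to simplicity) and is in fact slightly more general: it never uses the normality of $X$, only the normality of $W \cap X$ and the maximality of $X$ and $Y$. The paper's version, by contrast, makes the structural reason for the contradiction explicit --- the two quotients $G/(W\cap X)$ and $G/X$ cannot be isomorphic --- which fits the way the lemma is applied elsewhere (e.g.\ in the proof of Theorem~\ref{thm:solmaxclass}, where the quotient-comparison viewpoint recurs). Either proof is acceptable.
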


\begin{proof}
Assume for a contradiction that $X \cap Y \le W$. Then $$G/(W \cap X) = (W \cap X)Y/(W \cap X) \cong Y/(W \cap X \cap Y) = Y/(X \cap Y) \cong XY/X = G/X.$$ This contradicts the fact that $G/X$ is simple, while $G/(W \cap X)$ is not.
\end{proof}

\begin{prop}[{\cite[\S2]{nilppaper}}]
\label{prop:ncbasics}
Suppose that $G$ is non-abelian.
\begin{enumerate}[label={(\roman*)},font=\upshape]
\item \label{nodiamonenc} No connected component of $\nc(G)$ has diameter $1$.
\item \label{highgengp} If $G$ is not $2$-generated, then $\nc(G)$ is connected with diameter $2$.
\item \label{isolvertnc} Suppose that $G$ is $2$-generated, and let $g \in G \setminus Z(G)$. Then $g$ is an isolated vertex of $\nc(G)$ if and only if $g$ lies in a unique maximal subgroup $M$ of $G$ and in $Z(M)$. Moreover, if $g$ is not isolated, then $g \in L \setminus Z(L)$ for some maximal subgroup $L$ of $G$.
\item \label{propernc} Let $H$ be a proper non-abelian subgroup of $G$. Then the induced subgraph of $\nc(G)$ corresponding to $H \setminus Z(H)$ is connected with diameter $2$.
\end{enumerate}
\end{prop}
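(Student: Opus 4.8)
The plan is to handle the four parts in the order (i), then (ii) and (iv) together, then (iii); the middle two parts both reduce to the non-commuting graph of a non-abelian group, while (i) and (iii) are self-contained arguments.

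For \ref{nodiamonenc} I would start from an arbitrary edge $x \sim y$ of $\nc(G)$ and produce, inside its component, two vertices at distance $2$, exploiting the identities $[x,xy]=[x,y]$ and $[x,yx]=[x,y]^{x}$ together with the fact that a vertex and its inverse commute and so are never adjacent. If $x$ has order greater than $2$, then $x^{-1}\neq x$ is a non-central vertex with $[x^{-1},y]\neq 1$ and $\langle x^{-1},y\rangle=\langle x,y\rangle\neq G$, so $x^{-1}\sim y$; since $x\not\sim x^{-1}$, the vertices $x$ and $x^{-1}$ lie at distance $2$. If $x$ has order $2$ but $y$ does not, I apply the same argument with the roles of $x$ and $y$ reversed. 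Finally, if $x$ and $y$ are both involutions, then $xy$ and $yx=(xy)^{-1}$ are distinct non-central vertices, each adjacent to $x$ but, being mutual inverses, not to each other, again giving distance $2$. In every case the component has diameter at least $2$.

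The key observation for \ref{highgengp} and \ref{propernc} is that the non-generation condition becomes automatic, so the relevant graph is simply the non-commuting graph of a non-abelian group. Indeed, if $G$ is not $2$-generated then $\langle x,y\rangle\neq G$ for all $x,y$, so $x\sim y$ in $\nc(G)$ exactly when $[x,y]\neq 1$; and for a proper subgroup $H$ we have $\langle x,y\rangle\le H<G$ for all $x,y\in H$, so the induced subgraph on $H\setminus Z(H)$ has the same adjacency (note $H\setminus Z(H)\subseteq G\setminus Z(G)$, since a central element of $G$ lying in $H$ is central in $H$). I would then prove the standard fact that the non-commuting graph of a non-abelian group $K$ has diameter at most $2$: given commuting non-central $x,y$, choose $a\notin C_K(x)$ and $b\notin C_K(y)$; if $[y,a]\neq 1$ then $a$ is a common neighbour and if $[x,b]\neq 1$ then $b$ is, while otherwise $[y,a]=[x,b]=1$ and the identity $[x,ab]=[x,b][x,a]^{b}$ shows that $z=ab$ fails to commute with both $x$ and $y$, so it is a non-central common neighbour. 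Since $K$ is non-abelian the graph has an edge, hence is connected; the argument of \ref{nodiamonenc} (which used only commutator conditions, so applies verbatim inside $K$) rules out diameter $1$, forcing diameter exactly $2$. Taking $K=G$ gives \ref{highgengp} and $K=H$ gives \ref{propernc}.

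For \ref{isolvertnc} I would argue via maximal subgroups, using that $G$ is finitely generated so that Proposition~\ref{prop:fingenmaxsub} places every proper subgroup inside a maximal one. Suppose $g$ is isolated. As $G$ is non-cyclic we have $\langle g\rangle\ne G$, so $g$ lies in some maximal subgroup; in any maximal $M\ni g$ we must have $g\in Z(M)$, since otherwise an $m\in M$ with $[g,m]\neq 1$ is a non-central vertex with $\langle g,m\rangle\le M\neq G$, contradicting isolation. If $g$ lay in distinct maximal $M_1,M_2$, then $M_1,M_2\le C_G(g)$ gives $C_G(g)\ge\langle M_1,M_2\rangle=G$, so $g\in Z(G)$, a contradiction; hence the maximal subgroup is unique. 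Conversely, if $g$ lies in a unique maximal subgroup $M$ with $g\in Z(M)$, then any vertex $h$ with $[g,h]\neq 1$ satisfies $h\notin C_G(g)\ge M$; were $\langle g,h\rangle\neq G$ it would lie in a maximal subgroup, necessarily $M$ by uniqueness, forcing the contradiction $h\in M$, so $\langle g,h\rangle=G$ and $g$ is isolated. The ``moreover'' clause is immediate: a neighbour $h$ of a non-isolated $g$ gives $\langle g,h\rangle\le L$ for some maximal $L$ with $[g,h]\neq 1$, whence $g\in L\setminus Z(L)$. I expect the only delicate points to be the involution case of \ref{nodiamonenc} and the product construction $z=ab$ in the common-neighbour argument; both are elementary but require careful bookkeeping with commutator identities.
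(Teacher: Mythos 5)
Your proof is correct in all four parts. Note that the paper itself gives no proof of this proposition: it is imported wholesale from \cite[\S2]{nilppaper}, so there is no argument in this text to compare against line by line. Your route is the natural one and almost certainly mirrors the cited source: parts (ii) and (iv) reduce to the statement that the non-commuting graph of a non-abelian group is connected of diameter $2$, which the paper records separately as Proposition~\ref{prop:noncomdiam} (citing \cite{abdollahi}) and which you reprove from scratch via the $a$, $b$, $ab$ common-neighbour argument; part (iii) is the expected maximal-subgroup argument, and you correctly invoke Proposition~\ref{prop:fingenmaxsub} (with $G$ $2$-generated, hence finitely generated) where it is needed. The only place requiring real care is part \ref{nodiamonenc}, and your case analysis there is complete: you correctly verify that when $x$ and $y$ are both involutions the elements $xy$ and $yx$ are distinct, non-central, adjacent to $x$, and mutually commuting, which is exactly the bookkeeping one has to get right. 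One tiny presentational point: in part (iv) you should (and do) observe that $H\setminus Z(H)\subseteq G\setminus Z(G)$, so that the induced subgraph is genuinely the non-commuting graph of $H$; omitting that check is the most common slip in this argument, and you avoided it.
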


\begin{prop}
\label{prop:isolabmax}
Suppose that $G$ is $2$-generated, and that an element $g \in G$ lies in a unique maximal subgroup $M$ of $G$ and in $Z(M)$. If $G$ is finite or $M \trianglelefteq G$, then $M$ is abelian.
\end{prop}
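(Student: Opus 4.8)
The plan is to reduce everything to showing that $M/Z(M)$ is cyclic, since this standard condition forces $M$ to be abelian. First I would record the basic consequence of the hypothesis that $g$ lies in a unique maximal subgroup: as $G$ is $2$-generated, every subgroup $\langle g,h \rangle$ is finitely generated, so Proposition~\ref{prop:fingenmaxsub} gives $\langle g, h\rangle = G$ for every $h \in G \setminus M$ (otherwise $\langle g,h\rangle$ would lie in a maximal subgroup, necessarily $M$, contradicting $h \notin M$). In particular, if $g \in Z(G)$ then, choosing $h \notin M$, the group $G = \langle g, h\rangle$ is generated by two commuting elements and is abelian, so $M$ is abelian; I may therefore assume $g \notin Z(G)$, whence $M \le C_G(g) < G$ and maximality of $M$ give $C_G(g) = M$.

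Next I would treat the case $M \trianglelefteq G$, which covers the hypothesis $M \trianglelefteq G$ (finite or not) as well as the finite normal situation. Here $M$ being maximal and normal forces $G/M \cong C_p$ for a prime $p$; fixing $t \in G \setminus M$ we have $G = \langle g, t\rangle$. Since $M$ is normal and $g \in Z(M)$, each conjugate $t^{j} g t^{-j}$ lies in $Z(M)$. A short argument shows that $N := \langle t^{p},\, t^{j} g t^{-j} : j \in \mathbb{Z}\rangle$ is normalised by $t$, contains $g$, and satisfies $G/N \cong C_p$, so that $N = M$. Thus $M$ is generated by $t^{p}$ together with central elements, whence $M/Z(M)$ is cyclic and $M$ is abelian.

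The remaining, and hardest, case is $G$ finite with $M \not\trianglelefteq G$; then $M = N_G(M)$ is self-normalising, and by the first paragraph $g \notin Z(G)$ and $C_G(g) = M$. The key geometric input is that distinct conjugates of $M$ meet centrally: for $x \in G \setminus M$ the element $g^{x}$ lies in the unique maximal subgroup $M^{x} \ne M$, so $g^{x} \notin M$ (else $M = M^x$ by uniqueness) and hence $\langle g, g^{x}\rangle = G$; since $g$ and $g^{x}$ centralise $M$ and $M^x$ respectively, they both centralise $M \cap M^{x}$, which therefore lies in $C_G(G) = Z(G)$. Passing to $\overline G := G/Z(G)$, the image $\overline M$ is a proper, nontrivial, self-normalising subgroup with $\overline M \cap \overline M^{\,\overline x} = 1$ for all $\overline x \in \overline G \setminus \overline M$, so $\overline G$ is a Frobenius group with complement $\overline M$; by Frobenius' theorem $\overline G = \overline K \rtimes \overline M$ for a nontrivial normal kernel $\overline K$.

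Finally I would exploit the uniqueness hypothesis inside this Frobenius structure. Using $Z(G) \le M$ and the correspondence theorem, $\overline g$ lies in a unique maximal subgroup of $\overline G$, namely $\overline M$. If $\langle \overline g\rangle$ were proper in the finite group $\overline M$, it would lie in a maximal subgroup $\overline{M}_0 < \overline M$; but then $\overline K \rtimes \overline{M}_0$ is a maximal subgroup of $\overline G$ containing $\overline g$ and distinct from $\overline M$, contradicting uniqueness. Hence $\overline M = \langle \overline g\rangle$ is cyclic, so $M/Z(M)$ (a quotient of $M/Z(G) = \overline M$, as $Z(G) \le Z(M)$) is cyclic, and $M$ is abelian. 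I expect this last case to be the main obstacle: recognising the Frobenius configuration from the centraliser condition, and then converting the ``unique maximal subgroup'' hypothesis into cyclicity of the complement via the kernel-built maximal subgroup $\overline K \rtimes \overline{M}_0$.
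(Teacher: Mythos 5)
Your argument is correct, and in the substantive case (finite $G$ with $M \not\trianglelefteq G$) it is essentially the paper's proof: both establish $C_G(g)=M$, use $\langle g,g^x\rangle=G$ to get $M\cap M^x=Z(G)$, recognise $G/Z(G)$ as a Frobenius group with complement $M/Z(G)$, and invoke Frobenius' theorem for the kernel. The two points where you diverge are minor but worth noting. First, for $M \trianglelefteq G$ the paper simply cites \cite[Proposition 11]{nilppaper}, whereas you give a self-contained argument via the normal subgroup $N=\langle t^p,\, t^jgt^{-j}: j\in\mathbb{Z}\rangle$; this checks out ($g\in Z(M)$ centralises every generator, $t$ permutes them, $N\le M$ has index $p$, so $N=M$ and $M/Z(M)$ is cyclic) and works for infinite $G$ as required. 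Second, your endgame in the Frobenius case differs: the paper notes $\langle N,g\rangle=G$ (as $N\not\le M$), so $G=N\langle g\rangle$ and $M=(M\cap N)\langle g\rangle=Z(G)\langle g\rangle$ by Dedekind, while you instead derive cyclicity of $\overline M$ by producing the competing maximal subgroup $\overline K\,\overline{M}_0$ from a hypothetical maximal $\overline{M}_0<\overline M$ over $\langle\overline g\rangle$. Both endgames are valid and of comparable length; the paper's is marginally more direct, while yours makes the role of the uniqueness hypothesis more visible.
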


\begin{proof}
If $M \trianglelefteq G$, then $M$ is abelian by \cite[Proposition 11]{nilppaper}. Assume therefore that $G$ is finite and $M \not\trianglelefteq G$. Then $g \notin Z(G)$ and $M = C_G(g)$. Moreover, $g^h \in M^h \setminus M$ for each $h \in G \setminus M$, yielding $\langle g, g^h \rangle = G$ and $Z(G) = C_G(g) \cap C_G(g^h) = M \cap M^h$. Thus $G/Z(G)$ is a Frobenius group with Frobenius complement $M/Z(G)$, and so $G$ has a nontrivial normal subgroup $N$ with $G = NM$ and $N \cap M = Z(G)$. Observe that $G = \langle N, g \rangle = N \langle g \rangle$, and hence $M$ is the abelian group $\langle Z(G), g \rangle$.
\end{proof}

Our next two results involve the \emph{non-commuting graph} of $G$, which has vertex set $G \setminus Z(G)$, with two vertices adjacent if and only if they do not commute.

\begin{prop}[{\cite[Proposition 2.1]{abdollahi}}]
\label{prop:noncomdiam}
The non-commuting graph of a non-abelian group is connected with diameter $2$.
\end{prop}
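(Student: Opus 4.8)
The non-commuting graph of a non-abelian group is connected with diameter 2.

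Let me think about this.The plan is to show that any two distinct vertices $x, y$ of the non-commuting graph are joined by a path of length at most $2$, and then to rule out diameter $0$ or $1$. Recall that the vertex set is $G \setminus Z(G)$, with adjacency meaning non-commutation. Since $G$ is non-abelian, $G \setminus Z(G)$ is non-empty; and since a single vertex is never adjacent to itself, the graph has at least one edge (indeed any two non-commuting elements are adjacent), so the diameter is at least $1$. To see that it is not exactly $1$, I would note that a non-abelian group always contains two distinct commuting non-central elements (for instance, $g$ and $g^2$ when $g$ has order at least $3$, or two distinct non-central elements generating an abelian subgroup), so not every pair of vertices is adjacent; hence the diameter is exactly $2$ once connectedness with distance $\le 2$ is established.

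The heart of the argument is the distance-$\le 2$ claim. First I would dispose of the adjacent case: if $[x,y] \ne 1$ then $d(x,y) = 1$. So assume $[x,y] = 1$, i.e.\ $x$ and $y$ commute but are non-central. The goal is to produce a common neighbour $z \in G \setminus Z(G)$ with $[z,x] \ne 1 \ne [z,y]$.

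The main obstacle is finding such a common non-neighbour of the centralisers — equivalently, an element $z$ outside both $C_G(x)$ and $C_G(y)$. Since $x \notin Z(G)$, we have $C_G(x) \ne G$, so there exists $a$ with $[a,x] \ne 1$; similarly there exists $b$ with $[b,y] \ne 1$. If some single element fails to commute with both $x$ and $y$, we are done. The difficulty is the case where $a$ commutes with $y$ and $b$ commutes with $x$, so neither $a$ nor $b$ individually works. The standard resolution is to test the product: one examines whether $ab$, or a suitable modification, fails to commute with both $x$ and $y$. Concretely, with $a \in C_G(y) \setminus C_G(x)$ and $b \in C_G(x) \setminus C_G(y)$, the element $ab$ satisfies $[ab, x] = [a,x]^{b} \ne 1$ (using $[b,x]=1$) and $[ab,y] = [b,y] \ne 1$ (using $[a,y]=1$); one must only check $ab \notin Z(G)$, which holds because $ab$ does not commute with $x$. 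This supplies the common neighbour $z = ab$, giving the path $x \sim ab \sim y$ of length $2$.

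Finally I would assemble these pieces: the graph is non-empty and has an edge (so diameter $\ge 1$); every pair of vertices is at distance $\le 2$ (so it is connected); and there exist non-adjacent pairs (so diameter $\ne 1$). Hence the diameter is exactly $2$. The one point requiring slight care is verifying that the auxiliary elements $a, b$ and the product $ab$ genuinely lie in $G \setminus Z(G)$ rather than merely in $G$, but this follows automatically since any element not commuting with $x$ cannot be central.
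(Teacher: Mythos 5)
The paper does not prove this proposition itself; it is quoted from Abdollahi--Akbari--Maimani, and your argument is exactly the standard one from that source: given commuting non-central $x,y$, pick $a\notin C_G(x)$ and $b\notin C_G(y)$, and if neither works alone then (after reducing to $a\in C_G(y)$, $b\in C_G(x)$) the product $ab$ is a common neighbour. That part, including the commutator computation and the observation that $ab$ is automatically non-central, is correct.

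One small repair is needed in your argument that the diameter is not $1$: the pair $g, g^2$ can fail because $g^2$ may be central even when $g$ is not (e.g.\ $g=i$ in $Q_8$); the pair $g, g^{-1}$ works whenever some non-central $g$ has order at least $3$. In the remaining case every non-central element is an involution, and then for non-commuting involutions $x,y$ the element $xy$ is non-central (if $xy$ were central then $x(xy)=(xy)x$ forces $xy=yx$) and has order at least $3$, a contradiction; so non-adjacent pairs of vertices always exist and the diameter is exactly $2$.
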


\begin{lem}
\label{lem:noncycedge}
Let $N$ be a normal subgroup of $G$, and suppose that $G/N$ is not cyclic. Additionally, let $n \in N$ and $g \in G$. Then $\{n,g\}$ is an edge of $\nc(G)$ if and only if $[n,g] \ne 1$, i.e., if and only if $\{n,g\}$ is an edge of the non-commuting graph of $G$.
\end{lem}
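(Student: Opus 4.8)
The plan is to prove the equivalence in Lemma~\ref{lem:noncycedge} by showing that the non-generating condition $\langle n, g \rangle \ne G$ is automatic under the hypotheses, so that the only genuine requirement for $\{n,g\}$ to be an edge of $\nc(G)$ is the non-commuting condition $[n,g] \ne 1$. Since $\nc(G)$ is by definition a subgraph of the non-commuting graph (every edge of $\nc(G)$ is an edge of the non-commuting graph), the ``only if'' direction is immediate; the content is entirely in the ``if'' direction.

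So the main step is: assuming $[n,g] \ne 1$, show that $\langle n, g \rangle \ne G$. First I would observe that $n \in N$ and $g \in G$ both have well-defined images in the quotient $G/N$, namely $\overline n = N$ (the identity) and $\overline g = gN$. Therefore the image of $\langle n, g \rangle$ in $G/N$ is contained in $\langle \overline n, \overline g \rangle = \langle \overline g \rangle$, which is a cyclic subgroup of $G/N$. If we had $\langle n, g \rangle = G$, then projecting to $G/N$ would force $G/N = \langle \overline g \rangle$ to be cyclic, contradicting the hypothesis that $G/N$ is not cyclic. Hence $\langle n, g \rangle \ne G$, and combined with $[n,g] \ne 1$ this makes $\{n,g\}$ an edge of $\nc(G)$.

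A couple of bookkeeping points need care. I should note that for $\{n,g\}$ to even be a pair of vertices, we implicitly need $n, g \in G \setminus Z(G)$; but this is subsumed by the standing assumption that $\{n,g\}$ is being considered as a potential edge, and in any case $[n,g] \ne 1$ forces both $n$ and $g$ to be non-central, so no separate verification is required. I should also make explicit the elementary fact that the image of a subgroup generated by a set is generated by the images of that set, which justifies $\langle n, g \rangle N / N = \langle gN \rangle$.

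I do not anticipate a serious obstacle here: the argument is a short quotient-image computation, and the non-cyclicity of $G/N$ is exactly the hypothesis that rules out the generating case. The only thing to be careful about is stating the equivalence cleanly, since the lemma packages the conclusion both as ``an edge of $\nc(G)$'' and as ``an edge of the non-commuting graph,'' so the write-up should make clear that under these hypotheses these two notions of edge coincide on pairs $\{n, g\}$ with $n \in N$.
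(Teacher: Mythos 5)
Your argument is correct and is essentially the paper's proof: the paper likewise notes that $\langle N, g\rangle < G$ because $G/N$ is not cyclic (so $\langle n,g\rangle \le \langle N,g\rangle$ cannot be all of $G$), and then the equivalence follows from the definitions. Your extra bookkeeping about non-centrality and images of generated subgroups is fine but adds nothing beyond the paper's one-line version.
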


\begin{proof}
Since $G/N$ is not cyclic, it is clear that $\langle N, g \rangle < G$. The result now follows from the definitions of $\nc(G)$ and the non-commuting graph of $G$.
\end{proof}

\begin{lem}
\label{lem:quotientnc}
Let $N$ be a normal subgroup of $G$.
\begin{enumerate}[label={(\roman*)},font=\upshape]
\item \label{quotientnc1} Let $x,y \in G$. Then $\{Nx,Ny\}$ is an edge of $\nc(G/N)$ if and only if $[x,y] \notin N$ and $\langle x, y, N \rangle < G$.
\item \label{quotientnc2} Suppose that $\nc(G/N)$ has a connected component $C$ containing at least two vertices. Then the subgraph of $\nc(G)$ induced by the vertices in $\{x \in G \setminus Z(G) \mid Nx \in C\}$ is connected with diameter at most $\diam(C)$.
\end{enumerate}
\end{lem}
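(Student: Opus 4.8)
The plan is to prove part~\ref{quotientnc1} by directly unwinding the definition of adjacency in $\nc(G/N)$, and then to prove part~\ref{quotientnc2} by lifting paths from $\nc(G/N)$ back to $\nc(G)$, invoking part~\ref{quotientnc1} to recognise the lifted edges.

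For part~\ref{quotientnc1}, the commutator condition $[Nx,Ny] \ne N$ in $G/N$ is plainly equivalent to $[x,y] \notin N$, and the generation condition $\langle Nx, Ny \rangle \ne G/N$ is equivalent to $\langle x,y,N \rangle < G$, because $\langle Nx, Ny \rangle = \langle x,y,N \rangle / N$. The only point worth recording is that the vertex conditions $Nx, Ny \notin Z(G/N)$ are automatic once $[x,y] \notin N$: if $Nx$ and $Ny$ fail to commute in $G/N$, then neither can be central, and indeed $Nx \ne Ny$. Hence no separate vertex hypothesis is needed, and the two directions of the stated equivalence follow at once from the definitions.

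For part~\ref{quotientnc2}, the key observation is that both conditions in part~\ref{quotientnc1} depend only on the cosets and not on the chosen representatives: replacing $x,y$ by $nx,my$ with $n,m \in N$ alters neither $[x,y]N$ nor $\langle x,y,N \rangle$. Consequently, whenever $\{Nx,Ny\}$ is an edge of $\nc(G/N)$, \emph{every} pair of representatives $(x,y)$ satisfies $[x,y] \ne 1$ (as $[x,y] \notin N$) and $\langle x,y \rangle \le \langle x,y,N \rangle < G$, so $\{x,y\}$ is an edge of $\nc(G)$; moreover each representative lies in $G \setminus Z(G)$, since a non-central coset of $G/N$ has only non-central representatives (a central representative would project to a central coset). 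Thus every representative of a vertex of $C$ lies in the set $\{x \in G \setminus Z(G) \mid Nx \in C\}$. Given two such vertices $x,y$ with $Nx \ne Ny$, I would take a shortest path $Nx = c_0, c_1, \dots, c_k = Ny$ in $C$ (with $k \le \diam(C)$) and lift it: choosing any representatives $x_i$ of $c_i$ with $x_0 = x$ and $x_k = y$, the above shows $x_0 \sim x_1 \sim \cdots \sim x_k$ is a path of length $k$ in the induced subgraph of $\nc(G)$, giving $d(x,y) \le k \le \diam(C)$.

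The one case the lift does not immediately cover, and the main obstacle, is a pair of distinct vertices $x,y$ lying in the \emph{same} coset of $C$, where the projected distance is $0$. Here I would use that $C$ contains at least two vertices and is connected, so $Nx = Ny$ has a neighbour $c'$ in $C$; for any representative $z$ of $c'$ we then have $x \sim z \sim y$, a path of length $2$ (with $z \ne x, y$ since $Nz \ne Nx$). Since no component of $\nc(G/N)$ has diameter $1$ by Proposition~\ref{prop:ncbasics}\ref{nodiamonenc}, we have $\diam(C) \ge 2$, so this path also respects the required bound. Combining the two cases shows the induced subgraph is connected with diameter at most $\diam(C)$.
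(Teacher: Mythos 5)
Your proof is correct and follows essentially the same route as the paper: part~(i) by unwinding the definitions of commuting and generating in $G/N$, and part~(ii) by lifting a shortest path in $C$ back to $\nc(G)$ edge-by-edge via part~(i), using $\diam(C) \ge 2$ from Proposition~\ref{prop:ncbasics}\ref{nodiamonenc}. You are in fact slightly more explicit than the paper about the case of two distinct vertices of $\nc(G)$ lying in the same coset, which the paper's wording absorbs into its choice of a walk $(Nx, Ng_1, \ldots, Ng_n)$ with $g_n = y$.
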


\begin{proof} Observe that $[Nx,Ny]$ is the identity in $G/N$ if and only if $[x,y] \in N$, and $\langle Nx,Ny \rangle < G/N$ if and only if $\langle x, y, N \rangle < G$. Thus we obtain (i).

To prove (ii), let $x, y \in G \setminus Z(G)$ with $Nx, Ny \in C$. As $k:=\diam(C) \ge 2$ by Proposition~\ref{prop:ncbasics}\ref{nodiamonenc}, there exist $g_1,\ldots,g_n \in G \setminus Z(G)$, with $n \le k$ and $g_n = y$, such that $\nc(G/N)$ contains the path $(N x, N g_1, \ldots, N g_n)$. By (i), $(x,g_1,\ldots,g_n)$ is a path in $\nc(G)$ of length $n \le k$.
\end{proof}

\begin{lem}
\label{lem:centremaxmax}
Let $(x,J,K)$ be such that $J$ and $K$ are proper subgroups of $G$, with $x \in J \setminus Z(J)$ and $x \notin K$. In addition, suppose that $H:=J \cap K$ is a maximal subgroup of $J$, or that $K$ is a normal maximal subgroup of $G$.
\begin{enumerate}[label={(\roman*)},font=\upshape]
\item \label{centremaxmax1} There exists $h \in H$ such that $\{x,h\}$ is an edge of $\nc(G)$, and in particular $C_H(x) < H$.
\item \label{centremaxmax2} Suppose that there exists $y \in K \setminus Z(K)$ with $y \notin J$. If $H$ is a maximal subgroup of $K$, or if $J$ is a normal maximal subgroup of $G$, then there exists an element $g \in H$ such that $(x,g,y)$ is a path in $\nc(G)$.
\end{enumerate}
\end{lem}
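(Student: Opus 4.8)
The plan is to route both parts through a single observation: in every case under consideration, $H = J \cap K$ is a maximal subgroup of $J$ (and, in part (ii), of $K$), so that $x$ together with $H$ generates $J$.

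For part (i), I would first argue that the two alternative hypotheses both reduce to ``$H$ is maximal in $J$''. This is immediate in the first alternative. In the second, where $K \trianglelefteq G$ is maximal, I would use the standard fact that a maximal \emph{normal} subgroup has $G/K$ devoid of proper nontrivial subgroups, so $G/K$ is cyclic of prime order. Since $x \in J \setminus K$ forces $J \not\le K$ and hence $JK = G$, the second isomorphism theorem gives $J/H \cong G/K$ of prime index, whence $H$ is maximal in $J$. With $H$ maximal in $J$ in hand, note that $x \in J \setminus H$ (as $x \notin K \supseteq H$), so $\langle H, x \rangle = J$. If $x$ centralised all of $H$, it would centralise $\langle H, x \rangle = J$, contradicting $x \notin Z(J)$; thus $C_H(x) < H$, and any $h \in H \setminus C_H(x)$ satisfies $[x,h] \ne 1$ while $\langle x, h \rangle \le J < G$, producing the required edge of $\nc(G)$.

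For part (ii), the key structural point is that its two alternative hypotheses are exactly those of part (i) with the roles of $J$ and $K$ interchanged. I would therefore invoke part (i) twice: once on $(x,J,K)$ (using the lemma's blanket hypotheses) to get $C_H(x) < H$, and once on $(y,K,J)$ (using $y \in K \setminus Z(K)$, $y \notin J$, and ``$H$ maximal in $K$ or $J$ normal maximal in $G$'') to get $C_H(y) < H$. Now $C_H(x)$ and $C_H(y)$ are two proper subgroups of $H$, and since no group is the union of two of its proper subgroups, there is some $g \in H \setminus (C_H(x) \cup C_H(y))$. Such a $g$ has $[x,g] \ne 1$ and $[y,g] \ne 1$, while $\langle x, g \rangle \le J < G$ and $\langle g, y \rangle \le K < G$; hence $\{x,g\}$ and $\{g,y\}$ are edges and $(x,g,y)$ is the desired path.

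The only point requiring genuine care is the reduction in part (i): recognising that a normal maximal subgroup forces prime index and so collapses the second alternative into the first. After that the argument is routine, the only other ingredient being the elementary fact that a group cannot be covered by two proper subgroups, which furnishes the common neighbour $g$ in part (ii).
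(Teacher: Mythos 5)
Your proposal is correct and follows essentially the same route as the paper: reduce both alternatives to ``$H$ is maximal in $J$ (resp.\ $K$)'', deduce $C_H(x) < H$ from $\langle H, x\rangle = J$ and $x \notin Z(J)$, and in part (ii) pick a common neighbour in $H$ outside the union of the two proper centralisers. The only difference is that you spell out (via the second isomorphism theorem) the reduction step that the paper merely asserts.
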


\begin{proof}
Note that if $K$ is a normal maximal subgroup of $G$, then $H$ is maximal in $J$. Thus we may assume in general that $H$ is maximal in $J$, and similarly, that $H$ is maximal in $K$ in (ii).

Observe that $C_H(x) < H$, as otherwise $\langle H, x \rangle = J$ would centralise $x$. For each $h \in H \setminus C_H(x)$, the subgroup $\langle x,h \rangle$ lies in $J < G$. Hence $x \sim h$, and we obtain (i).

Now suppose that $H$ is maximal in $K$, and let $y$ be as in (ii). Arguing as above, $C_H(y) < H$. There exists $g \in H \setminus (C_H(x) \cup C_H(y))$, as the union of two proper subgroups of $H$ is a proper subset. Furthermore, $\langle x, g \rangle \le J < G$ and $\langle g, y \rangle \le K < G$, so that $x \sim g \sim y$, yielding (ii).
\end{proof}

\begin{lem}
\label{lem:ncmaxnormcombined}
Suppose that $G$ is $2$-generated, and let $(x,L,y,M)$ be such that $L$ and $M$ are non-abelian maximal subgroups of $G$, with $L \trianglelefteq G$, $x \in L \setminus Z(L)$ and $y \in M \setminus Z(M)$. Suppose also that $C_L(x) \trianglelefteq G$ or $M \trianglelefteq G$. Then $d(x,y) \le 3$. Moreover, $d(x,y) = 3$ if and only if either:
\begin{enumerate}[label={(\roman*)},font=\upshape]
\item $x \in Z(M)$, $y \notin L$, and $M$ is the only maximal subgroup of $G$ containing but not centralising $y$; or
\item $y \in Z(L)$, $x \notin M$, and $L$ is the only maximal subgroup of $G$ containing but not centralising $x$.
\end{enumerate}
\end{lem}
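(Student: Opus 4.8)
The plan is to extract the structural consequences of the hypotheses, then split into cases according to whether $x \in M$ and whether $y \in L$. Since $L \trianglelefteq G$ is maximal, $G/L$ has no proper nontrivial subgroup, so $G/L \cong C_p$ for a prime $p$; as $M \neq L$ is also maximal, $M \not\le L$ gives $LM = G$, whence $L \cap M \trianglelefteq M$ is maximal in $M$ with $M/(L \cap M) \cong G/L$. I would also note that $x,y \notin Z(G)$, and that $x \in Z(M)$ forces $C_G(x) = M$ (as $M \le C_G(x) < G$ with $M$ maximal), and symmetrically $y \in Z(L)$ forces $C_G(y) = L$. Finally, by Proposition~\ref{prop:ncbasics}\ref{propernc}, if $x$ and $y$ lie together in $H \setminus Z(H)$ for some proper non-abelian $H$, then $d(x,y) \le 2$; applying this with $H = L$ and with $H = M$ lets me assume henceforth that $y \in L \Rightarrow y \in Z(L)$ and $x \in M \Rightarrow x \in Z(M)$.

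The core case is $x \notin M$, $y \notin L$ (so $x \notin Z(M)$, $y \notin Z(L)$), where I claim $d(x,y) \le 2$. Here I look for a common neighbour $g \in L \cap M$: since $g,x \in L$ and $g,y \in M$, the generation conditions $\langle x,g\rangle, \langle y,g\rangle \neq G$ hold automatically, so it suffices to find $g \in (L \cap M) \setminus (C_{L\cap M}(x) \cup C_{L\cap M}(y))$. Maximality of $L \cap M$ in $M$ gives $C_{L\cap M}(y) < L \cap M$ (else $\langle L \cap M, y\rangle = M$ would centralise $y$). For the $x$-side I invoke the hypothesis: if $M \trianglelefteq G$ then $L \cap M$ is also maximal in $L$ and the same argument gives $C_{L\cap M}(x) < L \cap M$ (this is essentially Lemma~\ref{lem:centremaxmax}\ref{centremaxmax2}); if instead $C_L(x) \trianglelefteq G$ and $L \cap M \le C_L(x)$, then $G = C_L(x)M$ with $C_L(x) \cap M = L \cap M$, so $[G : C_L(x)] = [M : L \cap M] = p = [G:L]$ while $C_L(x) \le L$, forcing $C_L(x) = L$ and contradicting $x \notin Z(L)$. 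Either way the common neighbour exists.

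Next I treat the two configurations of condition (i) and (ii). Assuming (i) (so $x \in Z(M)$, $C_G(x) = M$, $y \notin L$, and $M$ is the unique maximal subgroup containing but not centralising $y$): here $[x,y] = 1$, and any common neighbour $g$ would satisfy $g \notin C_G(x) = M$, yet $\langle g,y\rangle < G$ lies in a maximal subgroup $N$ with $y \in N$ and $[g,y] \neq 1$, so by uniqueness $N = M$ and $g \in M$, a contradiction; hence $d(x,y) \ge 3$. For the matching upper bound I build the length-$3$ path $x \sim a \sim b \sim y$ entirely inside $L$ and $M$: choose $b \in (L \cap M) \setminus (Z(L) \cup C_M(y))$, which exists because $x \in (L \cap M) \setminus Z(L)$ shows $L \cap M \not\le Z(L)$ and $C_{L\cap M}(y) < L \cap M$, and then $a \in L \setminus (C_L(x) \cup C_L(b))$, both centralisers being proper in $L$. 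Condition (ii) is handled symmetrically, now placing the two $M$-edges first and using the hypothesis (exactly as in the core case) to guarantee $C_{L\cap M}(x) < L \cap M$.

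It remains to prove $d(x,y) \le 2$ in the residual configurations where $x \in Z(M)$ or $y \in Z(L)$ but neither (i) nor (ii) holds: namely (a) $x \in Z(M)$, $y \notin L$, with some maximal $N \neq M$ containing but not centralising $y$; and (b) $x \in Z(M)$ and $y \in Z(L)$ (so $x,y \in L \cap M$, $C_G(x) = M$, $C_G(y) = L$). In both cases the hypothesis yields $L \cap M \trianglelefteq G$ (since $C_L(x) = L \cap M$ here, or since $L,M \trianglelefteq G$), and my main tool is Lemma~\ref{lem:noncycedge}: applied with the normal subgroup $L \cap M$, whenever $G/(L \cap M)$ is \emph{non-cyclic} it promotes the non-commuting conditions $g \notin M$ and $g \notin L$ directly to adjacencies, instantly producing a common neighbour ($g \in N \setminus (M \cup C_N(y))$ in (a), any $g \notin L \cup M$ in (b)). The hard part will be the remaining case in which $G/(L \cap M)$ is cyclic, necessarily $\cong C_{pq}$ with $L$ and $M$ its only overgroups, where Lemma~\ref{lem:noncycedge} is unavailable: one must produce $g$ whose image generates $G/(L \cap M)$ yet for which $\langle x,g\rangle$ and $\langle y,g\rangle$ both remain proper. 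I expect to resolve this by exploiting that $L$ and $M$ are non-abelian together with the normality of $L \cap M$, choosing $g$ so that each of $\langle x,g\rangle$ and $\langle y,g\rangle$ fails to surject onto the relevant simple quotient; this is precisely the point at which the assumption ``$C_L(x) \trianglelefteq G$ or $M \trianglelefteq G$'' is indispensable.
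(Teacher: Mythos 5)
Your case division (reduce via Proposition~\ref{prop:ncbasics}\ref{propernc} to the four configurations of $x$ relative to $M,Z(M)$ and $y$ relative to $L,Z(L)$) matches the paper's, and most of the individual steps are sound: the index computation replacing the paper's use of Lemma~\ref{lem:threemaxint} in the case $x\notin M$, $y\notin L$ is correct, and your explicit three-edge paths and the ``unique maximal subgroup'' obstruction for the lower bound in configurations (i)--(ii) are exactly right. (The paper in fact disposes of the entire case $M\trianglelefteq G$ by citing \cite[Lemma 12]{nilppaper} and only argues the case $C_L(x)\trianglelefteq G$, $M\not\trianglelefteq G$ from scratch, so your attempt to treat both branches uniformly is more self-contained.)

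However, there is a genuine gap in your final step, and it occurs in a non-vacuous case. In the residual configurations you apply Lemma~\ref{lem:noncycedge} with the normal subgroup $L\cap M$, which requires $G/(L\cap M)$ to be non-cyclic. When $M\not\trianglelefteq G$ this is automatic (as $M/(L\cap M)$ is then a non-normal subgroup of $G/(L\cap M)$), but when $M\trianglelefteq G$ one has $[G:L\cap M]=[G:L]\cdot[G:M]=pq$, so $G/(L\cap M)\cong C_{pq}$ is cyclic whenever the two prime indices differ --- a completely ordinary situation. For that case you offer only a plan (``I expect to resolve this by \dots''), not an argument, and the plan as stated (forcing $\langle x,g\rangle$ and $\langle y,g\rangle$ to miss the simple quotients) is not carried out. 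The repair is to apply Lemma~\ref{lem:noncycedge} with the \emph{smaller} normal subgroup $\langle x\rangle^G$ (resp.\ $\langle y\rangle^G$) instead of $L\cap M$: if $M\trianglelefteq G$ then $\langle x\rangle^G\le Z(M)\trianglelefteq G$, and $G/Z(M)$ is non-cyclic because $M$ is non-abelian and normal, so $G/\langle x\rangle^G$ is non-cyclic; if instead $M\not\trianglelefteq G$ and $C_L(x)\trianglelefteq G$, then $\langle x\rangle^G\le C_L(x)=L\cap M$ and $G/(L\cap M)$ is non-cyclic as above. This is precisely the observation made in the first paragraph of the paper's proof, and with it every $g\notin M=C_G(x)$ becomes a neighbour of $x$, after which your choices of $g$ in cases (a) and (b) go through verbatim.
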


\begin{proof} If $M \trianglelefteq G$, then the result is precisely \cite[Lemma 12]{nilppaper}. Assume therefore that $C_L(x) \trianglelefteq G$ and $M \not\trianglelefteq G$. Additionally, let $\{(f,A),(g,B)\} = \{(x,L),(y,M)\}$. We claim that if $f \in Z(B)$, then $G/\langle f \rangle^G$ is not cyclic. This is clear if $B = L$. If instead $B = M$, then $C_L(x) = L \cap C_G(x) = L \cap M$, and so $\langle x \rangle^G \le L \cap M$. Furthermore, $M/(L \cap M) \not\trianglelefteq G/(L \cap M)$, since $M \not\trianglelefteq G$. Hence $G/(L \cap M)$ is not cyclic, and it follows that $G/ \langle x \rangle^G$ is also not cyclic, as claimed.

We split the remainder of the proof into four cases, corresponding to where $x$ lies with respect to $M$ and $Z(M)$ and where $y$ lies with respect to $L$ and $Z(L)$.

\medskip

\noindent \textbf{Case (a)}: $x \in M \setminus Z(M)$ or $y \in L \setminus Z(L)$. Here, we obtain $d(x,y) \le 2$ from Proposition~\ref{prop:ncbasics}\ref{propernc}.

\medskip

\noindent \textbf{Case (b)}: $x \notin M$ and $y \notin L$. Since $x$ lies in $L$ and in $C_G(x)$ but not in $M$, we see that $C_G(x) \cap L \not\le M$. Applying Lemma~\ref{lem:threemaxint} to $(C_G(x),L,M)$ therefore gives $L \cap M \not\le C_G(x)$. Hence $C_{L \cap M}(x) < L \cap M$, and applying Lemma~\ref{lem:centremaxmax} to $(y,M,L)$ yields $C_{L \cap M}(y) < L \cap M$. Thus there exists $h \in L \cap M$ that centralises neither $x$ nor $y$. It follows that $x \sim h \sim y$, and so $d(x,y) \le 2$.

\medskip

\noindent \textbf{Case (c)}: $x \in Z(M)$ and $y \in Z(L)$. Here, $[x,y] = 1$. As the non-commuting graph of $G$ is connected with diameter $2$ by Proposition~\ref{prop:noncomdiam}, this graph contains the path $(x,r,y)$ for some $r \in G \setminus Z(G)$. In addition, $G/\langle x \rangle^G$ and $G/\langle y \rangle^G$ are non-cyclic, by the first paragraph of the proof. It follows from Lemma~\ref{lem:noncycedge} that $(x,r,y)$ is also a path in $\nc(G)$, and hence $d(x,y) = 2$.

\medskip

\noindent \textbf{Case (d)}: $x \in Z(M)$ and $y \notin L$, or $y \in Z(L)$ and $x \notin M$. Here, $f \in Z(B)$ and $g \notin A$, where $\{(f,A),(g,B)\} = \{(x,L),(y,M)\}$ as above. We claim that $C_{A \cap B}(g) < H:=A \cap B$. Indeed, if $B = M$, then applying Lemma~\ref{lem:centremaxmax}\ref{centremaxmax1} to $(g,B,A)$ yields the claim. Otherwise, the claim follows by applying Lemma~\ref{lem:threemaxint} to $(C_G(g),B,A)$, as in the proof of Case (b). In general, as $f \in H \setminus Z(A)$, we see that $Z(A) \cap B < H$. Thus there exists $k \in H \setminus (C_H(g) \cup C_H(A))$. Observe that $g \sim k$, while $d(f,k) \le 2$ by Proposition~\ref{prop:ncbasics}\ref{propernc}. Hence $d(f,g) \le 3$.

It remains to show that $d(f,g) = 3$ if and only if $B$ is the unique maximal subgroup of $G$ that contains but does not centralise $g$. If $B$ is the unique such maximal subgroup, then $B$ contains the neighbourhood of $g$ in $\nc(G)$, while no element of $B$ is a neighbour of $f \in Z(B)$. Thus $d(f,g) > 2$, and so $d(f,g) = 3$ by the previous paragraph.

If instead $g \in K \setminus Z(K)$ for some maximal subgroup $K \ne B$ of $G$, then $K \cap B$ and $C_K(g)$ are proper subgroups of $K$. Hence there exists $s \in K \setminus (B \cup C_K(g))$, and in particular, $s \sim g$. Additionally, since $f \in Z(B)$, the quotient $G/\langle f \rangle^G$ is non-cyclic, by the first paragraph of the proof. As $s \notin B = C_G(f)$, it follows from Lemma~\ref{lem:noncycedge} that $f \sim s$, and thus $d(f,g) \le 2$.
\end{proof}

\section{Groups with two conjugacy classes of maximal subgroups}
\label{sec:twoconjmax}

Here, we consider groups whose maximal subgroups satisfy certain conditions, and in particular, finite groups with exactly two conjugacy classes of maximal subgroups.

\begin{lem}
\label{lem:twoconjclass}
Suppose that $G$ is finitely generated and non-cyclic. Moreover, assume that $G$ contains a normal maximal subgroup $M$, and that $K \cap M = L \cap M$ for all maximal subgroups $K$ and $L$ of $G$ distinct from $M$. Then $K \cap M = K \cap L = \Phi(G)$. Moreover, if $G$ is finite, then $G$ is soluble. If, in addition, $M$ is the unique normal maximal subgroup of $G$, then $G$ contains exactly two conjugacy classes of maximal subgroups.
\end{lem}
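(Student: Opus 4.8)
The plan is to pin down the two intersections as $\Phi(G)$, then reduce modulo $\Phi(G)$ to a Frobenius-type configuration from which solubility and the conjugacy count both follow. Throughout, write $D := K \cap M$, which by hypothesis is independent of the maximal subgroup $K \neq M$. Since $G$ is non-cyclic and finitely generated, Proposition~\ref{prop:fingenmaxsub} guarantees that a maximal subgroup $K \neq M$ exists: if $M$ were the only maximal subgroup then every element outside $M$ would generate $G$, forcing $G$ to be cyclic. As $D$ lies in $M$ and in every maximal subgroup $K' \neq M$ (because $D = K' \cap M \leq K'$), it lies in $\Phi(G)$; conversely $\Phi(G) \leq K \cap M = D$, so $D = \Phi(G)$. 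For distinct maximal $K, L \neq M$ I would establish $K \cap L = \Phi(G)$ via the Dedekind modular law: since $K \not\leq M$ and $M \trianglelefteq G$ we have $KM = G$, and were $K \cap L \not\leq M$ then $(K \cap L)M = G$, whence $K = K \cap (K \cap L)M = (K \cap L)(K \cap M) = K \cap L \leq L$, contradicting $K \neq L$. Hence $K \cap L \leq M$, so $K \cap L = (K \cap L) \cap M = \Phi(G)$.

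For solubility I would assume $G$ finite and pass to $\widetilde{G} := G/\Phi(G)$, with $\widetilde{M} := M/\Phi(G)$; the maximal subgroups of $\widetilde{G}$ are the images of those of $G$, and the first paragraph shows that every maximal subgroup of $\widetilde{G}$ other than $\widetilde{M}$ meets $\widetilde{M}$ trivially, hence is a complement to $\widetilde{M}$. The crux is to identify $\widetilde{G}/\widetilde{M} \cong G/M$: this quotient is simple (as $M$ is a normal maximal subgroup) and a complement $\widetilde{K}$, which exists, is isomorphic to it. If $G/M$ were non-abelian simple it would possess a proper nontrivial maximal subgroup, whose preimage would be a maximal subgroup of $\widetilde{G}$ properly containing $\widetilde{M}$, i.e.\ a maximal subgroup $\neq \widetilde{M}$ meeting $\widetilde{M}$ non-trivially, contradicting the uniform triviality just established. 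Thus $G/M \cong C_p$ for a prime $p$, every maximal subgroup $\neq \widetilde{M}$ is a complement of order $p$, and distinct such complements intersect trivially. Consequently $\widetilde{G}$ is either elementary abelian of order $p^2$ or a Frobenius group with kernel $\widetilde{M}$ and complement of order $p$; in either case $\widetilde{M}$ is nilpotent (trivially, or by the classical fact that a Frobenius kernel is nilpotent), so $\widetilde{G}$ is soluble. Since $\Phi(G)$ is nilpotent for finite $G$, this gives $G$ soluble.

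Finally I would assume, in addition, that $M$ is the unique normal maximal subgroup, so that every maximal subgroup $\neq M$ is non-normal and hence has at least two conjugates. For such a $K$, distinct conjugates are distinct maximal subgroups $\neq M$ and so intersect pairwise in $\Phi(G)$; combined with $\Phi(G) \leq \mathrm{Core}_G(K)$ this yields $\mathrm{Core}_G(K) = \Phi(G)$. Thus all non-normal maximal subgroups share the core $\Phi(G)$, and since $G$ is finite and soluble, Theorem~\ref{thm:finsolconjmax} shows that they are pairwise conjugate, forming a single conjugacy class; together with the class $\{M\}$ this produces exactly two conjugacy classes of maximal subgroups.

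The main obstacle is the solubility step, and specifically the passage forcing $G/M \cong C_p$ from the common-intersection pattern and then extracting the Frobenius structure on $\widetilde{G}$; once that configuration is identified, nilpotency of the Frobenius kernel finishes solubility, and the conjugacy count reduces to a direct application of Ore's theorem. The earlier intersection computation and the final counting are comparatively routine given the framework established in the excerpt.
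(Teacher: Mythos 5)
Your proposal is correct. The identification $K \cap M = K \cap L = \Phi(G)$ and the final conjugacy count via Ore's theorem (Theorem~\ref{thm:finsolconjmax}) match the paper's argument in substance; for the first of these the paper derives $K \cap L = K \cap M$ from the maximality of $K \cap M$ in $K$ together with $K \cap M \le K \cap L < K$, whereas you use the Dedekind modular law, but the two computations are interchangeable.

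The solubility step is genuinely different. The paper notes that all maximal subgroups distinct from $M$ have equal order, so $G$ has at most two orders of maximal subgroups, and then quotes Shi's classification \cite{shi} of the insoluble groups with that property to produce maximal subgroups of unequal order, a contradiction; your route instead extracts a Frobenius (or elementary abelian) structure on $\widetilde G := G/\Phi(G)$ and invokes Thompson's nilpotency of Frobenius kernels, trading the classification-flavoured input for a classical theorem. Your route does work, but two steps must be made explicit, because as literally stated ``distinct complements intersect trivially'' does not imply Frobenius: the transposition-generated complements of $A_4$ in $S_4$ pairwise intersect trivially, yet $S_4$ is not Frobenius over $A_4$. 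What rescues your argument is that your complements are \emph{maximal}. First, if $\widetilde K \not\trianglelefteq \widetilde G$ then maximality forces $N_{\widetilde G}(\widetilde K) = \widetilde K$, so $\widetilde K \cap \widetilde K^g = 1$ for all $g \notin \widetilde K$ and $\widetilde G$ is Frobenius; moreover $\widetilde M$ really is the kernel because it meets every conjugate of $\widetilde K$ trivially and has the kernel's order $|\widetilde G|/|\widetilde K|$. Second, if $\widetilde K \trianglelefteq \widetilde G$ then $\widetilde G = \widetilde M \times \widetilde K$ with $\widetilde M \cong \widetilde G/\widetilde K$ simple, and you must still rule out $\widetilde M$ non-abelian (a maximal $H < \widetilde M$ yields the maximal subgroup $H \times \widetilde K \ne \widetilde M$ meeting $\widetilde M$ nontrivially) and $\widetilde M \cong C_q$ with $q \ne p$ (which would make $\widetilde G$, hence $G$, cyclic) before landing on $C_p^2$. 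You should also record that $\widetilde M \ne 1$, i.e.\ $M \ne \Phi(G) = K \cap M$ (clear since $M \not\le K$), as this is what turns ``the preimage meets $\widetilde M$ nontrivially'' into an actual contradiction in your reduction to $G/M \cong C_p$. With these details supplied, your argument is a complete and arguably more self-contained alternative to the paper's.
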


\begin{proof}
Let $K$ and $L$ be distinct maximal subgroups of $G$ that are not equal to $M$ (these exist as $G$ is finitely generated and not cyclic). As $K \cap M = L \cap M$, we observe that $K \cap M \le K \cap L < K$. Moreover, $K \cap M$ is maximal in $K$ (by the normality of $M$), and thus $K \cap M = K \cap L$. Hence $K \cap M$ is the intersection of each pair of distinct maximal subgroups of $G$, and so $K \cap M = \Phi(G)$.

We assume from now on that $G$ is finite. Then $|K| = |L|$, and so the set $S$ of orders of maximal subgroups of $G$ has size at most $2$. Suppose first that $G$ is insoluble. Since $|S| \le 2$, the quotient $G/\Phi(G)$ is isomorphic to $H:=(C_2^{3i} \sd \mathrm{PSL}(2,7)) \times C_7^j$, where $i$ and $j$ are non-negative integers \cite{shi}. As $\mathrm{PSL}(2,7)$ contains maximal subgroups of index $7$ and $8$, we deduce from the simplicity of that group that $G$ contains non-normal maximal subgroups $A$ and $B$ of index $7$ and $8$, respectively, contradicting the requirement $|A| = |B|$.

Hence $G$ is soluble. Assume now that $M$ is the unique normal maximal subgroup of $G$. Then $K \not \trianglelefteq G$, and $\mathrm{Core}_G(K) = K \cap M = \Phi(G)$, for each maximal subgroup $K \ne M$. Thus Theorem~\ref{thm:finsolconjmax} shows that $G$ has exactly two conjugacy classes of maximal subgroups.
\end{proof}

We now examine the finite groups satisfying the final conclusion of the previous lemma.

\begin{thm}
\label{thm:solmaxclass}
Suppose that $G$ is finite. Then the following statements hold.
\begin{enumerate}[label={(\roman*)},font=\upshape]
\item \label{solmaxclass1} $G$ contains exactly two conjugacy classes of maximal subgroups if and only if:
\begin{enumerate}[label={(\alph*)},font=\upshape]
\item $G = P \sd Q$, where $P$ and $Q$ are nontrivial Sylow subgroups; and
\item $Q$ is cyclic and acts irreducibly on $P/\Phi(P)$.
\end{enumerate}
\item \label{solmaxclass2} Suppose that \textnormal{(i)(a)} and \textnormal{(b)} hold, and let $R$ be the unique maximal subgroup of $Q$. Then:
\begin{enumerate}[label={(\alph*)},font=\upshape]
\item \label{solmaxclass2a} the maximal subgroups of $G$ are $M:=P
R$ and the conjugates of $\Phi(P)Q$;
\item \label{solmaxclass2b} $R \trianglelefteq G$ if and only if $\Phi(G) = M \cap \Phi(P)Q$;
\item \label{solmaxclass2c} if $R \trianglelefteq G$, then $M = P \times R$, and $\Phi(G) = \Phi(P) \times R$ is the intersection of each pair of distinct maximal subgroups of $G$; and
\item \label{solmaxclass2d} if $R \trianglelefteq G$, $C_M(\Phi(G)) \not\le \Phi(G)$, and $M$ is non-abelian, then $\Phi(G) = Z(M)$, i.e., $\Phi(P) = Z(P)$.
\end{enumerate}
\end{enumerate}
\end{thm}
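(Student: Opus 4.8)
The plan is to prove part \ref{solmaxclass2}\ref{solmaxclass2a} first, since determining the maximal subgroups under hypotheses (i)(a)--(b) immediately yields the ``if'' direction of \ref{solmaxclass1}; then I would establish the harder ``only if'' direction of \ref{solmaxclass1}, and finally deduce \ref{solmaxclass2b}--\ref{solmaxclass2d}. Throughout, write $V := P/\Phi(P)$, and recall the standard fact that $\Phi(P) \le \Phi(G)$ because $P \trianglelefteq G$.

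For \ref{solmaxclass2a}, since $\Phi(P) \le \Phi(G)$ every maximal subgroup of $G$ contains $\Phi(P)$, so the maximal subgroups of $G$ correspond to those of $\overline{G} := G/\Phi(P) = V \rtimes Q$, where $V$ is a nontrivial irreducible $Q$-module. A maximal subgroup of $\overline G$ either contains $V$ --- and then corresponds to the unique maximal subgroup $R$ of the cyclic group $Q$, giving $M = PR$, which is normal since it contains $P$ with $G/P$ abelian --- or meets $V$ in a proper $Q$-submodule, hence trivially by irreducibility, and so is a complement to $V$. As $\gcd(|V|,|Q|) = 1$, all complements are conjugate, corresponding to the conjugates of $\Phi(P)Q$. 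This proves \ref{solmaxclass2a}; moreover $M$ (which contains $P$) is not conjugate to $\Phi(P)Q$ (which does not), so $G$ has exactly two conjugacy classes of maximal subgroups, giving the ``if'' direction of \ref{solmaxclass1}.

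For the ``only if'' direction --- the main obstacle --- suppose $G$ has exactly two conjugacy classes of maximal subgroups. Conjugate subgroups have equal order, so at most two orders occur, and I would rule out insolubility exactly as in Lemma~\ref{lem:twoconjclass}: by \cite{shi} the only insoluble possibility has $G/\Phi(G) \cong (C_2^{3i} \rtimes \mathrm{PSL}(2,7)) \times C_7^j$, but $\mathrm{PSL}(2,7)$ already contributes three classes of maximal subgroups, a contradiction. Hence $G$ is soluble, and Theorem~\ref{thm:finsolconjmax} lets me replace ``conjugacy class'' by ``core'': $G$ has exactly two distinct cores of maximal subgroups. If both are non-normal then $G$ is perfect, contradicting solubility; if both are normal maximal subgroups then $G$ is nilpotent with exactly two maximal subgroups, whence $G \cong C_{p^a} \times C_{q^b}$, which has the required form. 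So I may assume exactly one core is a normal maximal subgroup $M$, and its uniqueness forces $G/G'$ to be cyclic of prime-power order $q^c$. The second core $C$ yields a primitive quotient $G/C = \overline V \rtimes \overline K$ with $\overline V = \mathrm{soc}(G/C)$ an elementary abelian $p$-group. The technical heart is to promote this to a normal Sylow $p$-subgroup $P$ with $G = P \rtimes Q$ (so that only the two primes $p,q$ divide $|G|$) and $\overline V \cong P/\Phi(P)$; I expect to do this by induction on $|G|$, reducing via a Frattini minimal normal subgroup to the case $\Phi(G) = 1$, where $\mathrm{soc}(G) = F(G) = C_G(\mathrm{soc}(G))$ is complemented and the two-core condition pins down a single prime $p \ne q$ and a single complemented $p$-chief factor. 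Once $G = P \rtimes Q$ is known, $Q \cong G/P$ is a quotient of $G$, so $Q^{\mathrm{ab}}$ is cyclic and hence $Q$ is cyclic; irreducibility of the $Q$-action on $P/\Phi(P)$ then follows since a proper nonzero $Q$-submodule would produce a third core. This step, which also appears in \cite{adnan}, is where essentially all the difficulty lies.

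Finally I would turn to \ref{solmaxclass2b}--\ref{solmaxclass2d}, all under (i)(a)--(b) with $R$ the unique maximal subgroup of $Q$. Using the decomposition $G = P \rtimes Q$ one computes $M \cap \Phi(P)Q = \Phi(P)R$, while $\Phi(G) = M \cap \mathrm{Core}_G(\Phi(P)Q) \le \Phi(P)R$. If $R \trianglelefteq G$ then $\Phi(P)R \trianglelefteq G$ lies in every conjugate of $\Phi(P)Q$, forcing $\Phi(G) = \Phi(P)R = M \cap \Phi(P)Q$; conversely, if this equality holds then $\Phi(P)R \trianglelefteq G$ has $R$ as a Sylow $q$-subgroup, and a Frattini argument together with the non-generator property of $\Phi(G)$ gives $N_G(R) = G$, yielding \ref{solmaxclass2b}. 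For \ref{solmaxclass2c}, coprimality gives $M = P \times R$ and $\Phi(G) = \Phi(P) \times R$; when two distinct conjugates of $\Phi(P)Q$ exist, the faithful irreducible action of $Q/R$ on $V$ ensures no nonidentity element fixes a nonzero vector, so they meet precisely in $R$, while $M$ meets each such conjugate in $\Phi(G)$ by normality --- hence every pair of distinct maximal subgroups intersects in $\Phi(G)$. For \ref{solmaxclass2d}, from $M = P \times R$ we get $Z(M) = Z(P) \times R$, so it suffices to show $\Phi(P) = Z(P)$; both $Z(P)$ and $C_P(\Phi(P))$ are normal in $G$, so their images in $V$ are $Q$-submodules, and by irreducibility together with the non-generator property, non-abelianness of $M$ forces $Z(P) \le \Phi(P)$ while $C_M(\Phi(G)) \not\le \Phi(G)$ forces $\Phi(P) \le Z(P)$. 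Thus $\Phi(P) = Z(P)$ and $Z(M) = \Phi(G)$, completing the proof.
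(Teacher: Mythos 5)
Your treatment of part (ii) and of the ``if'' direction of (i) is correct, and in places cleaner than the paper's: the paper establishes (ii)(a) by a direct element-wise argument (any further maximal subgroup $T$ must contain a conjugate of $Q$, after which Theorem~\ref{thm:finsolconjmax} forces $G = T\Phi(P)Q$ and hence $P \le T$), whereas you pass to $G/\Phi(P) = V \rtimes Q$ and invoke irreducibility plus conjugacy of complements to the normal Sylow subgroup $V$; for (ii)(c) the paper merely asserts that distinct conjugates of $\Phi(P)Q$ meet in $\Phi(P) \times R$, while you supply the fixed-point argument justifying it; and for (ii)(d) the paper argues via the subgroup $C_P(\Phi(P))Q$ and Lemma~\ref{lem:threemaxint}, where you use that the images of $Z(P)$ and $C_P(\Phi(P))$ in $V$ are $Q$-submodules. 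All of these variants are sound.

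The one genuine gap is the ``only if'' direction of (i). The paper does not prove this direction at all: it cites Adnan \cite{adnan} for the implication that two conjugacy classes of maximal subgroups force (i)(a)--(b). You instead sketch a from-scratch proof, and the sketch is fine up to the point where you have a unique normal maximal subgroup $M$, a cyclic $q$-group $G/G'$, and a soluble primitive quotient $G/C = \overline V \rtimes \overline K$; but the crucial step --- producing a normal Sylow $p$-subgroup $P$ with $G = P \sd Q$ and $\overline V \cong P/\Phi(P)$, so that in particular only two primes divide $|G|$ --- is only announced (``I expect to do this by induction on $|G|$''), not carried out, and you yourself identify it as where essentially all the difficulty lies. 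As written this is a plan rather than a proof; to match the paper you would simply cite \cite{adnan} at this point, and the insolubility reduction via \cite{shi} that you borrow from Lemma~\ref{lem:twoconjclass} would then not be needed for this theorem.
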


\begin{proof}
We will begin by proving (i) and (ii)(a). Adnan \cite{adnan} proved that if $G$ contains exactly two conjugacy classes of maximal subgroups, then $G$ satisfies (i)(a) and (b). We will therefore assume that (i)(a) and (b) hold. 
The irreducibility of the action of $Q$ on $P/\Phi(P)$ implies that 
$\langle \Phi(P), Q, x \rangle = G$ for each $x \in P \setminus \Phi(P)$, and so $\Phi(P)Q$ and its $G$-conjugates are maximal subgroups of $G$. Since $R$ is maximal in $Q$ and $P \trianglelefteq G$, we deduce that $M:=PR$ is also a maximal subgroup of $G$. As $G/P$ is cyclic, its subgroup $M/P$ is normal, and hence $M \trianglelefteq G$.

To complete the proofs of (i) and (ii)(a), it suffices to show that we have described all maximal subgroups of $G$. Suppose, for a contradiction, that $G$ contains a maximal subgroup $T$ that is neither equal to $M$ nor conjugate to $\Phi(P)Q$. Then $T$ contains an element $xy$, where $x \in P$ and (without loss of generality) $y$ is a generator for $Q$. For each integer $k$, the projection of $(xy)^k$ onto $Q$ is equal to $y^k$. Thus $|Q|$ divides $|xy|$, and it follows that $T$ contains an element of order $|Q|$. Hence $T$ contains a Sylow subgroup of $G$ of order $|Q|$, and we may assume that $Q \le T$.

Let $S$ be the projection of $T$ onto $P$, i.e., the set of elements $v \in P$ such that there exists $w \in Q$ with $vw \in T$. By the previous paragraph, $S = T \cap P$. Additionally, Theorem~\ref{thm:finsolconjmax} implies that $G = T\Phi(P)Q=TQ\Phi(P)=SQ\Phi(P) = S\Phi(P)Q$. As $G = P \sd Q$, we deduce that $\langle S, \Phi(P) \rangle = P$, hence $S = P$ and $P \le T$. Thus $T$ contains $\langle P, Q \rangle = G$. This contradicts the maximality of $T$, and we obtain (i) and (ii)(a).

We now prove (ii)(b)--(c). Assume first that $\Phi(G) = M \cap \Phi(P)Q$. Then this intersection, which is equal to $\Phi(P)R$, is normal in $G$. Since $R$ is a Sylow $q$-subgroup of $\Phi(P)R$, the Frattini Argument yields $G = \Phi(P)R N_G(R) = \Phi(P)N_G(R)$. Thus $P = \Phi(P)N_G(R) \cap P = \Phi(P)(N_G(R) \cap P)$. Hence $P = N_G(R) \cap P$, i.e., $P \le N_G(R)$. Therefore, $R \trianglelefteq P \sd Q = G$. 

Conversely, assume that $R \trianglelefteq G$. Since $P \cap R = 1$, it is clear that $M = P \times R$. Additionally, as $G=P \sd Q^g$ for each $g \in G$, we see that $$M \cap (\Phi(P)Q)^g = (P \times R) \cap \Phi(P)Q^g = (P \cap \Phi(P))(R \cap Q^g) =  \Phi(P) \times R.$$ As $\Phi(P) \times R$ is the intersection of any two distinct $G$-conjugates of $\Phi(P)Q$, (ii)(b)--(c) hold.

To prove (ii)(d), assume again that $R \trianglelefteq G$. Observe that $C_M(\Phi(G)) = C_{P \times R}(\Phi(P) \times R) = C_P(\Phi(P)) \times R$. Note that $C_P(\Phi(P))Q$ is a subgroup of $G$, since $C_P(\Phi(P))$ is characteristic in $P$. As $\Phi(P)Q$ is the unique maximal subgroup of $G$ containing $Q$, and as $P \cap Q = 1$, it follows that either $C_P(\Phi(P)) \le \Phi(P)$ or $C_P(\Phi(P)) = P$. In the former case, $C_M(\Phi(G)) \le \Phi(P) \times R = \Phi(G)$.

Assume now that $M$ is non-abelian and $C_M(\Phi(G)) \not\le \Phi(G)$. Then $C_M(\Phi(G)) = P \times R = M$ and $\Phi(G) \le Z(M)$. Suppose for a contradiction that $\Phi(G) \ne Z(M)$, so that $Z(M) \not\le \Phi(G) = M \cap \Phi(P)Q$. Then $Z(M) \not\le \Phi(P)Q$, and applying Lemma~\ref{lem:threemaxint} to $(Z(M),M,\Phi(P)Q)$ yields $\Phi(G) = {M \cap \Phi(P)Q} \not\le Z(M)$, a contradiction. Thus $\Phi(G) = Z(M)$, and (ii)(d) follows.
\end{proof}

Note that the above theorem is closely related to Theorem~\ref{thm:sumtwo}. For convenience, we will collect conditions (i)--(iv) of the latter theorem in the following assumption, together with the condition on maximal subgroups that Theorem~\ref{thm:solmaxclass}\ref{solmaxclass1} shows is equivalent to (i)--(ii).

\begin{assump}
\label{assump:22g}
Assume that $G$ is finite and contains exactly two conjugacy classes of maximal subgroups, i.e., that $G = P \sd Q$, where $P$ and $Q$ are nontrivial Sylow subgroups such that $Q$ is cyclic and acts irreducibly on $P/\Phi(P)$. In addition, assume that $\Phi(P) = Z(P) \not\le Z(G)$, and that the unique maximal subgroup of $Q$ is normal in $G$.
\end{assump}

\section{Normal, non-abelian maximal subgroups with large centres}
\label{sec:normmaxzm}

We now focus on the case where $G$ contains a normal, non-abelian maximal subgroup $M$ satisfying $Z(G) < Z(M)$ (equivalently, $Z(M) \not\le Z(G)$). In particular, we determine upper bounds for the distance in $\nc(G)$ between an element of $M \setminus Z(M)$ and an element of $G \setminus M$ or $Z(M) \setminus Z(G)$. We will apply these results in \S\ref{sec:normalcent} in order to bound $\diam(\nc(G))$.

\begin{prop}[{\cite[Proposition 10]{nilppaper}}]
\label{prop:nomaxab}
Suppose that $G$ contains a normal non-abelian maximal subgroup $M$, with $Z(G) < Z(M)$. Then each maximal subgroup of $G$ is non-abelian.
\end{prop}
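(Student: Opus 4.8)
The plan is to argue by contradiction: suppose $G$ has an abelian maximal subgroup $L$. Since $M$ is non-abelian, $L \ne M$, and as distinct maximal subgroups satisfy $M \not\le L$, the normality of $M$ gives $G = LM$ and hence $G/M \cong L/(L \cap M)$. First I would record two easy facts. Because $L$ is abelian, $L \le C_G(L)$, and also $Z(G) \le C_G(L)$; maximality of $L$ forces $C_G(L) \in \{L, G\}$, and $C_G(L) = G$ would give $L \le Z(G) \le M$, contradicting $L \ne M$. Hence $C_G(L) = L$ and $Z(G) \le L$; since also $Z(G) \le Z(M) \le M$, we obtain $Z(G) \le L \cap M$.

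The heart of the argument is to apply Lemma~\ref{lem:nonabmax} with $X = L$ and $Y = M$: the hypotheses that $X$ is maximal and $Y \not\le X$ hold, and $Z(X) \cap Y = L \cap M$ since $L$ is abelian. Thus, provided $L \cap M \not\le Z(G)$, the lemma (using that $X = L$ is abelian) yields $Z(M) \le Z(G)$, contradicting $Z(G) < Z(M)$. So it remains only to rule out the case $L \cap M \le Z(G)$, which, combined with $Z(G) \le L \cap M$, means $L \cap M = Z(G)$.

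To handle this remaining case, I would exploit that $G/M \cong L/(L \cap M) = L/Z(G)$ is abelian; since $M$ is a normal maximal subgroup, $G/M$ is simple, and an abelian simple group is cyclic of prime order, so $G/M \cong C_p$. I then claim that no normal subgroup $N$ of $G$ satisfies $Z(G) < N < M$. Indeed, $NL$ is a subgroup (as $N \trianglelefteq G$) containing the maximal subgroup $L$, so $NL = L$ or $NL = G$; the former gives $N \le L \cap M = Z(G)$, while the latter, via the Dedekind modular law (as $N \le M$), gives $M = M \cap NL = N(M \cap L) = N\,Z(G) = N$. Both possibilities contradict $Z(G) < N < M$. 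But $Z(M)$ is a normal (indeed characteristic in $M$) subgroup of $G$ with $Z(G) < Z(M) < M$ --- the last inequality because $M$ is non-abelian --- so taking $N = Z(M)$ gives the desired contradiction. This argument remains valid for infinite $G$, since neither the Dedekind law nor the classification of abelian simple groups as the $C_p$ requires finiteness.

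I expect the main obstacle to be precisely the case $L \cap M \le Z(G)$: the straightforward application of Lemma~\ref{lem:nonabmax} fails there, because its hypothesis $Z(X) \cap Y \not\le Z(G)$ is not met, so one cannot directly contradict $Z(G) < Z(M)$. The resolution is the structural observation above --- that $L \cap M = Z(G)$ forces $G/M$ to be cyclic of prime order, and hence $M/Z(G)$ to be a minimal normal subgroup of $G/Z(G)$ --- after which the forbidden intermediate subgroup $Z(M)$ delivers the contradiction without appealing to the centralizer of any specific central element of $M$.
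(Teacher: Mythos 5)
Your proof is correct. Note that this paper does not actually reprove the proposition---it is quoted from \cite{nilppaper}---so there is no in-text argument to match; however, the remark preceding Lemma~\ref{lem:threemaxint} indicates that the original proof rests on the argument which that lemma generalises, and this gives a uniform, one-case treatment: for $z \in Z(M) \setminus Z(G)$ one has $C_G(z) = M$, so an abelian maximal subgroup $L$ cannot contain $Z(M)$ (else $L \le C_G(z) = M$, forcing $L = M$, which is non-abelian); applying Lemma~\ref{lem:threemaxint} to $(Z(M), M, L)$ then yields an element $m \in (L \cap M) \setminus Z(M)$, whence $C_G(m) = L$ and $Z(M) \le C_M(m) = L \cap M \le L$, a contradiction. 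Your route instead splits on whether $L \cap M \le Z(G)$: the generic case is dispatched by Lemma~\ref{lem:nonabmax} exactly as you say, and the boundary case $L \cap M = Z(G)$ by showing that $M/Z(G)$ is a minimal normal subgroup of $G/Z(G)$ (via the maximality of $L$ and the Dedekind modular law) and then exhibiting $Z(M)/Z(G)$ as a forbidden intermediate normal subgroup. Both cases check out and remain valid for infinite $G$. The only blemish is cosmetic: the observation that $G/M \cong C_p$ plays no role in your second case---the Dedekind argument uses only $L \cap M = Z(G)$, the normality of $N$, and the maximality of $L$---so the ``hence'' in your closing paragraph, deriving the minimal normality of $M/Z(G)$ from $G/M \cong C_p$, misstates the logical dependence, though the argument you actually run is sound.
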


\begin{lem}
\label{lem:zneighbs}
Suppose that $G$ contains a normal, non-abelian maximal subgroup $M$ with $Z(G) < Z(M)$, and let $z \in Z(M) \setminus Z(G)$. Then $G \setminus M$ is the set of neighbours of $z$ in $\nc(G)$.
\end{lem}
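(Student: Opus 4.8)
The plan is to verify the two adjacency conditions defining $\nc(G)$ separately, after splitting $G$ into $M$ and $G \setminus M$. First I would record the basic facts that make the statement sensible. Since $z \in Z(M)$ we have $M \le C_G(z)$, and since $z \notin Z(G)$ we have $C_G(z) \ne G$; as $M$ is maximal, this forces $C_G(z) = M$. Moreover, $Z(G) < Z(M) \le M$ gives $Z(G) \le M$, so that every element of $G \setminus M$ is a genuine vertex of $\nc(G)$. It therefore suffices to show that the neighbours of $z$ are exactly the elements of $G \setminus M$.

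One containment is immediate: if $w \in M$ is any vertex, then $[z,w] = 1$ because $z \in Z(M)$, so $w$ is not adjacent to $z$. Hence every neighbour of $z$ lies in $G \setminus M$. For the reverse containment, I would fix $w \in G \setminus M$. Since $C_G(z) = M$ and $w \notin M$, we have $[z,w] \ne 1$, so the non-commuting condition holds. It then remains to establish $\langle z, w \rangle \ne G$, which is the non-generating condition and the heart of the argument.

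I expect this last step to be the main obstacle, and it is exactly where the non-abelianness of $M$ must be used. I would argue by contradiction: suppose $\langle z, w \rangle = G$. Writing $N := \langle z \rangle^G$ for the normal closure of $z$, the image of $z$ in $G/N$ is trivial, so $G/N = \langle Nw \rangle$ is cyclic; consequently its subgroup $M/N$ is cyclic, say $M = N\langle u \rangle$ for some $u \in M$. Now $Z(M)$ is characteristic in $M$ and $M \trianglelefteq G$, so $Z(M) \trianglelefteq G$, and since $z \in Z(M)$ we obtain $N = \langle z \rangle^G \le Z(M)$. Thus $M$ is generated by the central subgroup $N$ together with the cyclic subgroup $\langle u \rangle$, which forces $M$ to be abelian (any product of a central element with a power of $u$ commutes with any other such product) — contradicting the hypothesis that $M$ is non-abelian. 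Hence $\langle z, w \rangle \ne G$, so $w$ is adjacent to $z$, and this completes the reverse containment and the proof.
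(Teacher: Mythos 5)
Your proof is correct and follows essentially the same route as the paper: the paper simply notes that $G/Z(M)$ is non-cyclic (since a cyclic $M/Z(M)$ would force $M$ abelian) and invokes Lemma~\ref{lem:noncycedge} with $N = Z(M)$, whereas you unpack that lemma's argument directly using the normal closure $\langle z \rangle^G \le Z(M)$ in place of $Z(M)$ itself. The key step in both cases is identical — if $\langle z, w\rangle = G$ then a central-by-cyclic argument makes $M$ abelian — so your write-up is just a self-contained version of the paper's two-line proof.
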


\begin{proof}
As $M$ is non-abelian, $G/Z(M)$ is not cyclic. Thus Lemma~\ref{lem:noncycedge} yields the result.
\end{proof}

\begin{lem}
\label{lem:maxsubgendist}
Suppose that $G$ contains a normal, non-abelian maximal subgroup $M$ with $Z(G) < Z(M)$. In addition, let $x \in M \setminus Z(M)$ and $z \in Z(M) \setminus Z(G)$, and let $\mathcal{J}_M$ be the set of maximal subgroups of $G$ distinct from $M$. If $\langle I \cap M \mid I \in \mathcal{I} \rangle = M$ for some $\mathcal{I} \subseteq \mathcal{J}_M$, then there exists $I \in \mathcal{J}_M$ such that $x \notin C_M(I \cap M)$. More generally, if such $I$ exists, then $d(x,z) \le 3$.
\end{lem}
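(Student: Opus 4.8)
The plan is to handle the two assertions in turn. The first is a short contradiction argument: if $x\in C_M(I\cap M)$ for every $I\in\mathcal I$, then $x$ centralises $\langle I\cap M\mid I\in\mathcal I\rangle=M$, so $x\in Z(M)$, contrary to $x\in M\setminus Z(M)$. Hence some $I\in\mathcal I\subseteq\mathcal J_M$ satisfies $x\notin C_M(I\cap M)$, giving the first conclusion.

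For the distance bound, I would fix such an $I$ together with an element $h\in I\cap M$ satisfying $[x,h]\ne1$. Since $M$ is maximal and $I\ne M$, we have $I\not\le M$, so $IM=G$ and $I\cap M$ is maximal in $I$ with $I/(I\cap M)\cong G/M$. By Lemma~\ref{lem:zneighbs} the neighbours of $z$ in $\nc(G)$ are exactly the elements of $G\setminus M$, so it suffices to find $w\in G\setminus M$ with $d(x,w)\le2$: appending the edge $\{w,z\}$ then gives a path of length at most $3$. I would now split according to whether $I\cap M\le Z(I)$.

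In the case $I\cap M\not\le Z(I)$, the complement $I\setminus M=I\setminus(I\cap M)$ generates $I$ (as $I\cap M$ is maximal in $I$), so it is not contained in the proper subgroup $C_I(I\cap M)$; thus some $w\in I\setminus M$ does not centralise $I\cap M$. Then $C_{I\cap M}(x)$ and $C_{I\cap M}(w)$ are both proper in $I\cap M$ (the former because $h$ witnesses $[x,h]\ne1$), and since a group is never the union of two proper subgroups, I can pick $g\in I\cap M$ with $[x,g]\ne1$ and $[g,w]\ne1$. As $\langle x,g\rangle\le M<G$ and $\langle g,w\rangle\le I<G$, this yields the path $x\sim g\sim w\sim z$, so $d(x,z)\le3$.

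The case $I\cap M\le Z(I)$ is where the key observation enters, and handling it is the step I expect to require the most care. Here $I$ is non-abelian by Proposition~\ref{prop:nomaxab}, so its central quotient $I/Z(I)$ is non-cyclic; since $I\cap M\le Z(I)$, this $I/Z(I)$ is a homomorphic image of $I/(I\cap M)\cong G/M$, forcing $G/M$ to be non-cyclic. Lemma~\ref{lem:noncycedge} then applies with the normal subgroup $M$: for $x\in M$, any $w$ with $[x,w]\ne1$ is automatically adjacent to $x$ in $\nc(G)$, the generation condition being vacuous. As $x\notin Z(G)$, the group $G$ is not the union of the proper subgroups $M$ and $C_G(x)$, so I can choose $w\in G\setminus M$ with $[x,w]\ne1$; then $x\sim w\sim z$ gives $d(x,z)\le2$. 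The crux is thus recognising that the configuration blocking the first construction (namely $I\cap M\le Z(I)$) is precisely the one that makes $G/M$ non-cyclic, at which point Lemma~\ref{lem:noncycedge} trivialises the connection from $x$ to $G\setminus M$.
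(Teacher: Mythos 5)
Your proof is correct and follows essentially the same route as the paper: both build the path $x \sim g \sim w \sim z$ with $g \in I\cap M$ and $w \in I\setminus M$, using Lemma~\ref{lem:zneighbs} for the edge to $z$ and the fact that a group is never the union of two proper subgroups to choose $g$. The only divergence is that the paper produces a suitable $w$ uniformly, by taking $s\in I\setminus(Z(I)\cup M)$ and applying Lemma~\ref{lem:centremaxmax}\ref{centremaxmax1} to $(s,I,M)$, whereas you split off the case $I\cap M\le Z(I)$ --- which that same lemma shows is actually vacuous, since it would force $I=Z(I)\cup(I\cap M)$ --- though your separate treatment of that case (deducing that $G/M$ is non-cyclic and getting $d(x,z)\le 2$ via Lemma~\ref{lem:noncycedge}) is also correct.
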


\begin{proof}
First, if ${\langle I \cap M \mid I \in \mathcal{I} \rangle} = M$ for some $\mathcal{I} \subseteq \mathcal{J}_M$, then $\bigcap_{I \in \mathcal{I}} C_M(I \cap M) = Z(M)$, and so there exists $I \in \mathcal{I}$ such that $x \notin C_M(I \cap M)$.

We now assume, more generally, that there exists $I \in \mathcal{J}_M$ such that $x \notin {C_M(I \cap M)}$. Then $C_{I \cap M}(x) < I \cap M$. Additionally, by Proposition~\ref{prop:nomaxab}, $I$ is non-abelian, and so there exists $s \in {I \setminus (Z(I) \cup M)}$. Applying Lemma~\ref{lem:centremaxmax}\ref{centremaxmax1} to $(s,I,M)$ now yields $C_{I \cap M}(s) < I \cap M$. Therefore, there exists an element $t \in I \cap M$ that centralises neither $x$ nor $s$. In addition, $s \sim z$ by Lemma~\ref{lem:zneighbs}. Thus $(x,t,s,z)$ is a path in $\nc(G)$, and $d(x,z) \le 3$.
\end{proof}

We can now bound distances in $\nc(G)$ between elements of $M \setminus Z(M)$ and elements of ${Z(M) \setminus Z(G)}$. Here, and in much of what follows, we will assume that $G$ is $2$-generated, as otherwise $\diam(\nc(G)) = 2$ by Proposition~\ref{prop:ncbasics}\ref{highgengp}.

\begin{prop}
\label{prop:xzdist}
Suppose that $G$ is $2$-generated and contains a normal, non-abelian maximal subgroup $M$ with $Z(G) < Z(M)$. In addition, let $x \in M \setminus Z(M)$ and $z \in Z(M) \setminus Z(G)$. Then the following statements hold.
\begin{enumerate}[label={(\roman*)},font=\upshape]
\item \label{xzdist1} $x$ and $z$ lie in distinct connected components of $\nc(G)$ if and only if $K \cap M = Z(M)$ for every maximal subgroup $K$ of $G$ distinct from $M$. Otherwise, $d(x,z) \le 4$.
\item \label{xzdist2} Suppose that $d(x,z) < \infty$. Then $d(x,z) = 4$ if and only if, for each maximal subgroup $K$ of $G$ distinct from $M$:
\begin{enumerate}[label={(\alph*)},font=\upshape]
\item $x \notin K$; and
\item $x \in C_M(K \cap M)$.
\end{enumerate}
\item \label{xzdist3} Suppose that \textnormal{(ii)(a)}--\textnormal{(b)} hold for each maximal subgroup $K$ of $G$ distinct from $M$. Then $K \cap M = \Phi(G)$ for all such $K$.
\item Suppose that $d(x,z) < \infty$, and that $G$ is finite. Then $d(x,z) \le 3$.
\end{enumerate}
\end{prop}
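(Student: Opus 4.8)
The plan is to use part~\ref{xzdist1} to reduce everything to ruling out $d(x,z)=4$, and then to show that this value would force $x$ and $z$ into distinct components, contradicting $d(x,z)<\infty$. Since $d(x,z)<\infty$, part~\ref{xzdist1} already gives $d(x,z)\le 4$, so suppose for a contradiction that $d(x,z)=4$. Then part~\ref{xzdist2} applies, and combined with part~\ref{xzdist3} it shows that every maximal subgroup $K\ne M$ satisfies $x\notin K$ and $K\cap M=\Phi(G)$, while $x\in C_M(\Phi(G))$. As $\Phi(G)\le K$ and $x\notin K$ we get $x\notin\Phi(G)$, so $x$ witnesses $C_M(\Phi(G))\not\le\Phi(G)$, and $M$ is the only maximal subgroup containing $x$. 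Moreover, since $d(x,z)$ is finite, $x$ and $z$ are non-isolated vertices of $\nc(G)$.

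I would then determine the structure of $G$. If every maximal subgroup of $G$ were normal, then (as $\nc(G)$ has an edge) Theorem~\ref{thm:nilpncgeneral} would give $\diam(\nd(G))\le 3$, and since $x$ and $z$ are non-isolated and lie in a common component this would force $d(x,z)\le 3$, a contradiction; hence $G$ has a non-normal maximal subgroup. I next claim that $M$ is the unique normal maximal subgroup of $G$. Indeed, a second normal maximal subgroup $N\ne M$ would satisfy $N\cap M=\Phi(G)$ and $NM=G$, so that $G/\Phi(G)=(M/\Phi(G))\times(N/\Phi(G))$ would be abelian; this forces $G$ to be nilpotent and hence every maximal subgroup of $G$ to be normal, contradicting the previous sentence.

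Finally, since $M$ is non-abelian the group $G$ is non-cyclic; as all intersections $K\cap M$ (over maximal $K\ne M$) are equal and $M$ is the unique normal maximal subgroup, Lemma~\ref{lem:twoconjclass} (with $G$ finite) shows that $G$ has exactly two conjugacy classes of maximal subgroups. Thus Theorem~\ref{thm:solmaxclass}(i) gives $G=P\sd Q$ with $Q$ cyclic acting irreducibly on $P/\Phi(P)$; by Theorem~\ref{thm:solmaxclass}(ii)(a) the maximal subgroups are $PR$, where $R$ is the maximal subgroup of $Q$, together with the conjugates of $\Phi(P)Q$, and since $PR$ is the normal one we have $M=PR$. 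Taking $K=\Phi(P)Q$ in $K\cap M=\Phi(G)$ gives $\Phi(G)=M\cap\Phi(P)Q$, so Theorem~\ref{thm:solmaxclass}(ii)(b) yields $R\trianglelefteq G$; since $M$ is non-abelian with $C_M(\Phi(G))\not\le\Phi(G)$, Theorem~\ref{thm:solmaxclass}(ii)(d) then gives $\Phi(G)=Z(M)$. But now $K\cap M=Z(M)$ for every maximal $K\ne M$, so part~\ref{xzdist1} places $x$ and $z$ in distinct components of $\nc(G)$, contradicting $d(x,z)<\infty$; therefore $d(x,z)\le 3$. The main obstacle is this middle step — proving that $M$ is the only normal maximal subgroup — since that is precisely what unlocks the rigid $P\sd Q$ description through Lemma~\ref{lem:twoconjclass} and Theorem~\ref{thm:solmaxclass}; once the structure is available, the contradiction via part~\ref{xzdist1} and Theorem~\ref{thm:solmaxclass}(ii)(d) is immediate.
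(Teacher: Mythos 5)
Your overall strategy is the same as the paper's: assume $d(x,z)=4$, combine parts (ii) and (iii) to get $x\in C_M(\Phi(G))\setminus\Phi(G)$ with $K\cap M=\Phi(G)$ for all maximal $K\ne M$, feed this into Lemma~\ref{lem:twoconjclass} and Theorem~\ref{thm:solmaxclass}, and derive a contradiction from Theorem~\ref{thm:solmaxclass}\ref{solmaxclass2}\ref{solmaxclass2d}. The endgame is fine. But the step you yourself flag as the main obstacle --- showing that $M$ is the \emph{unique} normal maximal subgroup, which is exactly what Lemma~\ref{lem:twoconjclass} needs before it will deliver two conjugacy classes --- contains a genuine gap. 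If $N\ne M$ is a second normal maximal subgroup, you correctly get $N\cap M=\Phi(G)$ and $NM=G$, hence $G/\Phi(G)\cong (M/\Phi(G))\times(N/\Phi(G))$. However, the two factors are isomorphic to $G/N$ and $G/M$, which are simple because $N$ and $M$ are normal and maximal, but need not be \emph{abelian} simple. A direct product of two non-abelian simple groups is certainly not abelian, so the inference ``hence $G/\Phi(G)$ is abelian, so $G$ is nilpotent'' does not follow; compare $A_5\times A_5$, where the two obvious normal maximal subgroups intersect in $\Phi(G)=1$. Nothing in your argument rules this shape out at that point in the proof.

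The repair is short and is what the paper does (it is established in the proof of part (iii) and merely cited in part (iv)): if $N\ne M$ were a normal maximal subgroup, then $x\notin N$ by (ii)(a), so Lemma~\ref{lem:centremaxmax}\ref{centremaxmax1} applied to the triple $(x,M,N)$ --- which is legitimate precisely because $N$ is a \emph{normal} maximal subgroup --- gives $C_{N\cap M}(x)<N\cap M$, i.e.\ $x\notin C_M(N\cap M)$, directly contradicting (ii)(b). This also makes your appeal to Theorem~\ref{thm:nilpncgeneral} unnecessary. With that substitution the rest of your argument goes through; your closing contradiction (deduce $\Phi(G)=Z(M)$ from Theorem~\ref{thm:solmaxclass}\ref{solmaxclass2}\ref{solmaxclass2d} and then invoke part (i)) is just the contrapositive of the paper's (deduce $C_M(\Phi(G))\le\Phi(G)$ from $\Phi(G)\ne Z(M)$), and either version is acceptable.
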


\begin{proof}
We first note that Proposition~\ref{prop:nomaxab} implies that $G$ contains no abelian maximal subgroups, while Lemma~\ref{lem:zneighbs} shows that $G \setminus M$ is the set of neighbours of $z$ in $\nc(G)$.

\medskip

\noindent (i) Suppose first that $K \cap M = Z(M)$ for every maximal subgroup $K$ of $G$ distinct from $M$, and let $y \in (G \setminus M)  \cup Z(M)$. Then $M$ is the unique maximal subgroup of $G$ containing $x$, and so if $\langle x, y \rangle < G$, then $y \in Z(M)$, and hence $[x,y] = 1$. Thus there is no edge in $\nc(G)$ between any element of $M \setminus Z(M)$ and any element of $(G \setminus M)  \cup Z(M) = G \setminus (M \setminus Z(M))$. In particular, the connected component of $\nc(G)$ containing $x$ consists only of elements of $M \setminus Z(M)$, and so this component does not contain $z \in Z(M)$.

Conversely, suppose that there exists a maximal subgroup $L$ of $G$ distinct from $M$ that satisfies $L \cap M \ne Z(M)$. We claim that $L \cap M \not\le Z(M)$. Indeed, either $Z(M) \not\le L \cap M$ or $L \cap M \not\le Z(M)$, and if the former holds, then applying Lemma~\ref{lem:threemaxint} to $(Z(M),M,L)$ yields $L \cap M \not\le Z(M)$. Additionally, as $L$ is non-abelian, there exists $r \in L \setminus (Z(L) \cup M)$. Applying Lemma~\ref{lem:centremaxmax}\ref{centremaxmax1} to $(r,L,M)$ yields $C_{L \cap M}(r) < L \cap M$, and so $Z(L) \cap M < L \cap M$. We also see, since $L \cap M \not\le Z(M)$, that $L \cap Z(M) < L \cap M$. Thus there exists an element $s \in (L \cap M) \setminus (Z(L) \cup Z(M))$, and an element $t \in L \setminus (C_L(s) \cup M)$. Proposition~\ref{prop:ncbasics}\ref{propernc} gives $d(x,s) \le 2$, and as $G \setminus M$ is the neighbourhood of $z$ in $\nc(G)$, we observe that $s \sim t \sim z$. Therefore, $d(x,z) \le d(x,s)+d(s,z) \le 4$.

\medskip

\noindent (ii) Assume first that (a) and (b) hold for each maximal subgroup $K$ of $G$ distinct from $M$. As $G \setminus M$ is the set of neighbours of $z$ in $\nc(G)$, it suffices by (i) to show that $d(x,t) \ge 3$ for all $t \in G \setminus M$. Suppose for a contradiction that $d(x,t) \le 2$ for some $t$. By (a), $\langle x, t \rangle = G$, and so $d(x,t) = 2$. Thus there exists $s \in M$ such that $x \sim s \sim t$, and so $\langle s, t \rangle$ lies in a maximal subgroup $R$ of $G$. However, $x$ centralises $s \in R \cap M$ by (b), a contradiction. Thus $d(x,z) = 4$.

Conversely, suppose that some maximal subgroup $K$ of $G$ distinct from $M$ fails to satisfy either (a) or (b). We will prove that $d(x,z) \le 3$. If $K$ does not satisfy (b), i.e., if $x \notin C_M(K \cap M)$, then this is an immediate consequence of Lemma~\ref{lem:maxsubgendist}, with $I = K$.

Assume therefore that $K$ does not satisfy (a), i.e., that $x \in K$. If $x \notin Z(K)$, then $C_K(x)$ and $C_K(z) = K \cap M$ are proper subgroups of $K$. Hence there exists $r \in K \setminus (M \cup C_K(x))$. As $G \setminus M$ is the neighbourhood of $z$ in $\nc(G)$, it follows that $x \sim r \sim z$ and $d(x,z) = 2$.

Suppose now that $x \in Z(K)$. Then $K = C_G(x)$, and since $x$ also lies in $M \setminus Z(G)$, applying Lemma~\ref{lem:nonabmax} to $(K,M)$ implies that $Z(M) \le K \cap M$, and it is now clear that $z \in K \setminus Z(K)$. If $K \trianglelefteq G$, then we obtain $d(x,z) \le 3$ by applying Lemma~\ref{lem:ncmaxnormcombined} to $(x,M,z,K)$.

If instead $K \not\trianglelefteq G$, then let $g \in G \setminus K$. If $x \in K^g$, then since $x \notin Z(K^g)$, applying the second last paragraph with $K^g$ replacing $K$ yields $d(x,z) = 2$. Otherwise, $x \notin K^g = C_G(x^g)$, and since $x^g \in K^g \cap M^g = K^g \cap M$, setting $I = K^g$ in Lemma~\ref{lem:maxsubgendist} gives $d(x,z) \le 3$.

\medskip

\noindent (iii) Let $\mathcal{J}_M$ be the set of maximal subgroups of $G$ distinct from $M$. Note that no $R \in \mathcal{J}_M$ is normal in $G$; otherwise, applying Lemma~\ref{lem:centremaxmax}\ref{centremaxmax1} to $(x,M,R)$ would imply that $x \notin C_{M}(R \cap M)$, contradicting (ii)(b). Suppose for a contradiction that $K \cap M \ne \Phi(G)$ for some $K \in \mathcal{J}_M$. We will show that there exists a subset $\mathcal{I}$ of $\mathcal{J}_M$ such that $\langle I \cap M \mid I \in \mathcal{I} \rangle = M$, and it will follow from Lemma~\ref{lem:maxsubgendist} that (ii)(b) does not hold for some $I \in \mathcal{J}_M$, a contradiction.

Suppose first that there exists $R \in \mathcal{J}_M$ such that $R \cap M \not\trianglelefteq G$. Since $R \cap M$ is a maximal subgroup of $R$, which is not normal in $G$, we see that $\langle R \cap M \rangle^G \not\le R$. However, $\langle R \cap M \rangle^G \le M$, since $M \trianglelefteq G$. If ${\langle R \cap M \rangle^G} \ne M$, then we can apply Lemma~\ref{lem:threemaxint} to the triple ${(\langle R \cap M \rangle^G,M,R)}$ of distinct subgroups, and this yields $R \cap M \not\le \langle R \cap M \rangle^G$, a contradiction. Therefore, $M = \langle R \cap M \rangle^G = {\langle R^g \cap M \mid g \in G \rangle}$, and so we can set $\mathcal{I} = \{R^g \mid g \in G\}$.

Assume finally that $R \cap M \trianglelefteq G$ for all $R \in \mathcal{J}_M$. Since $K \cap M \ne \Phi(G)$, Lemma~\ref{lem:twoconjclass} implies that there exists a maximal subgroup $L \in \mathcal{J}_M$ such that $L \cap M \ne K \cap M$. Either $K \cap M \not\le L$ or $L \cap M \not\le K$, and if the former holds, then since $K \cap M \trianglelefteq G$, applying Lemma~\ref{lem:threemaxint} to $(K,M,L)$ shows that the latter holds too. Thus, in general, $L \cap M \not\le K$. As $L \cap M \trianglelefteq G$, we obtain $(L \cap M)(K \cap M) = ((L \cap M)K)\cap M = G \cap M = M$. We can therefore set $\mathcal{I} = \{K,L\}$.

\medskip

\noindent (iv) By (i), $d(x,z) \le 4$, and there exists a maximal subgroup $K$ of $G$ distinct from $M$ with $K \cap M \ne Z(M)$. Suppose for a contradiction that $d(x,z) = 4$. Then (ii) shows that each maximal subgroup $L$ of $G$ distinct from $M$ satisfies $x \in C_M(L \cap M) \setminus L$. Additionally, (iii) implies that $\Phi(G) = {L \cap M}$ for each $L$, and so $x \in C_M(\Phi(G)) \setminus \Phi(G)$. Furthermore, Lemma~\ref{lem:twoconjclass} implies that the finite group $G$ contains exactly two conjugacy classes of maximal subgroups. Since $M$ is the unique normal maximal subgroup of $G$ (as in the proof of (iii)), and since $\Phi(G) = K \cap M$, the equivalent conditions of part (b) of Theorem~\ref{thm:solmaxclass}\ref{solmaxclass2} hold. In addition, $M$ is non-abelian and $\Phi(G) = K \cap M \ne Z(M)$, and so part (d) of that theorem shows that $C_M(\Phi(G)) \le \Phi(G)$, contradicting $x \in C_M(\Phi(G)) \setminus \Phi(G)$. Thus $d(x,z) \le 3$.
\end{proof}

\begin{ques}
\label{ques:xz}
Does there exist an infinite $2$-generated group $G$ that contains a normal, non-abelian maximal subgroup $M$ with $Z(G) < Z(M)$, and that satisfies the equivalent conditions of Proposition~\ref{prop:xzdist}\ref{xzdist2} for some $x \in M \setminus Z(M)$, so that $d(x,z) = 4$ for $z \in Z(M) \setminus Z(G)$?
\end{ques}

In the following result, we write $\hat H:=H/Z(M)$ when $H$ is a subgroup of $G$ containing $Z(M)$. Recall that an abstract group is primitive if it contains a core-free maximal subgroup, which is a point stabiliser for the corresponding primitive coset action.

\begin{prop}
\label{prop:gkzmprim}
Suppose that $G$ contains a normal, non-abelian maximal subgroup $M$, and a maximal subgroup $K$ with $K \cap M = Z(M)$. Then the following statements hold.
\begin{enumerate}[label={(\roman*)},font=\upshape]
\item \label{gkzmprim1} $\hat G$ is primitive, and is the semidirect product of its unique minimal normal subgroup $\hat M$ by its point stabiliser $\hat K$, which has prime order.
\item $\hat G$ is finite if and only if it is soluble. Hence $G$ is soluble if and only if $\hat G$ is finite.
\item \label{gkzmprim3} If $\hat G$ is infinite, then $\hat M$ is an infinite simple group, and $|\hat K|$ is odd.
\item \label{gkzmprim4} If $G$ contains a maximal subgroup $L$ with $L \ne M$ and $Z(M) < L \cap M$, then $\hat G$ is infinite.
\end{enumerate}
\end{prop}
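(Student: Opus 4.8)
The plan is to work throughout in $\hat G = G/Z(M)$, which is legitimate because $Z(M)$ is characteristic in $M\trianglelefteq G$ and hence normal in $G$. For (i), the key observation is that since $M$ is a \emph{normal} maximal subgroup, the correspondence theorem shows $G/M$ has no proper nontrivial subgroup, so $G/M\cong C_p$ for a prime $p$; thus $\hat G/\hat M\cong C_p$. As $K\supseteq K\cap M=Z(M)$, the quotient $\hat K=K/Z(M)$ is a maximal subgroup of $\hat G$ with $\hat K\cap\hat M=(K\cap M)/Z(M)=1$, and $\hat K\ne 1$ (since $Z(M)<M<G$ forces $K\ne Z(M)$). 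Hence $\hat K$ embeds into $\hat G/\hat M\cong C_p$, giving $\hat K\cong C_p$ and $\hat G=\hat M\rtimes\hat K$. A short Dedekind-modular-law argument shows $\hat M$ is minimal normal: any $1\ne N\trianglelefteq\hat G$ with $N\le\hat M$ satisfies $N\hat K=\hat G$ by maximality of $\hat K$, whence $\hat M=N(\hat M\cap\hat K)=N$. For primitivity and uniqueness I would use that $\hat M$ is non-cyclic (as $M$ is non-abelian, $M/Z(M)$ cannot be cyclic): if $\hat K$ were normal in $\hat G$, or if $\hat G$ had a minimal normal subgroup $N\ne\hat M$ (which then satisfies $N\cong C_p$ and is a normal complement of $\hat M$), one obtains $\hat G=\hat M\times C_p$ in either case, and the maximality of the prime-order complement $\hat K$ is incompatible with $\hat M$ being non-cyclic. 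Thus $\mathrm{Core}_{\hat G}(\hat K)=1$, so $\hat G$ is primitive with point stabiliser $\hat K$, and $\hat M$ is its unique minimal normal subgroup.

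For (ii), the recurring tool is that, because $\hat K$ is \emph{maximal}, $\hat M$ has no proper nontrivial $\hat K$-invariant subgroup at all (any such $H$ would yield an intermediate subgroup $H\hat K$). For ``soluble $\Rightarrow$ finite'': a soluble minimal normal subgroup is abelian, so $\hat M$ is abelian with no proper nontrivial $\hat K$-invariant subgroup; ruling out invariant torsion and divisible parts forces $\hat M$ to be a finite elementary abelian irreducible $\mathbb F_\ell[C_p]$-module, so $\hat G$ is finite. For ``finite $\Rightarrow$ soluble'': if $\hat G$ is finite then $\hat M=T^k$ for a simple group $T$, with $\hat K$ transitively permuting the factors, so $k\in\{1,p\}$. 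If $T$ were non-abelian, then for $k=1$ the fixed-point subgroup $C_T(\hat K)\ne 1$ (by Thompson's theorem, since $T$ is not nilpotent and $\hat K$ acts nontrivially), while for $k=p$ the diagonal of $T^p$ is $\hat K$-invariant; either way this is a proper nontrivial $\hat K$-invariant subgroup, contradicting the maximality of $\hat K$. Hence $T$ is abelian and $\hat G$ is soluble. Finally $G$ is soluble iff $\hat G$ is soluble (as $Z(M)$ is abelian), iff $\hat G$ is finite.

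Part (iii) is where the real work lies. Writing $\hat K=\langle\bar k\rangle$, the subgroup $C_{\hat M}(\bar k)$ is $\hat K$-invariant and proper (as $\hat K$ is core-free, it does not centralise $\hat M$), so $C_{\hat M}(\bar k)=1$: the action of $\bar k$ is fixed-point-free. To show $|\hat K|$ is odd I would exclude $p=2$ directly: for any $1\ne m$, the subgroup $\langle m,m^{\bar k}\rangle$ is $\hat K$-invariant, hence equals $\hat M$; and $[m,m^{\bar k}]^{\bar k}=[m^{\bar k},m]=[m,m^{\bar k}]^{-1}$, so $\langle[m,m^{\bar k}]\rangle$ is $\hat K$-invariant and therefore trivial (it cannot be the non-cyclic group $\hat M$). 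Thus $m$ and $m^{\bar k}$ commute and $\hat M=\langle m,m^{\bar k}\rangle$ is abelian, forcing $\hat G$ soluble and hence finite by (ii) -- contrary to hypothesis. For simplicity, note that an infinite $\hat G$ is insoluble by (ii), so $\hat M$ is non-abelian with $Z(\hat M)=1$. Supposing $\hat M$ had a proper nontrivial normal subgroup $R$, its $\hat G$-conjugates would be exactly $R,R^{\bar k},\dots,R^{\bar k^{p-1}}$ (the orbit has size $p$, since size $1$ would make $R$ a forbidden $\hat K$-invariant subgroup), their product is the $\hat K$-invariant group $\hat M$, and their intersection is trivial. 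The strategy is to show these conjugates pairwise commute -- by controlling the $\hat K$-invariant intersection-products $\prod_i\bigl(R^{\bar k^i}\cap R^{\bar k^{i+d}}\bigr)$ -- and then to observe that the ``diagonal'' $\{\,r\,r^{\bar k}\cdots r^{\bar k^{p-1}}:r\in R\,\}$ is a $\hat K$-fixed subgroup, hence lies in $C_{\hat M}(\bar k)=1$, producing relations that collapse $\hat M$ and yield a contradiction.

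The hardest point, and the main obstacle, is exactly this last step in the infinite case: an infinite $\hat M$ need not possess a minimal normal subgroup, so one cannot simply decompose it as a direct product of isomorphic simple groups as in the finite case, and establishing that the $p$ conjugates of $R$ commute (equivalently, that the relevant intersection-products vanish) is the crux. Part (iv) is then comparatively routine, and I would prove it by contraposition: if $\hat G$ is finite then, by (ii)--(iii), $\hat M$ is finite elementary abelian and $\hat K$ acts irreducibly, so every maximal subgroup of $\hat G$ other than $\hat M$ is a complement of order $p$ meeting $\hat M$ trivially. A maximal subgroup $L\ne M$ of $G$ with $Z(M)<L\cap M$ would contain $Z(M)$, so $\hat L=L/Z(M)$ would be a maximal subgroup of $\hat G$ distinct from $\hat M$ with $\hat L\cap\hat M=(L\cap M)/Z(M)\ne 1$ -- impossible. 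Hence the existence of such an $L$ forces $\hat G$ to be infinite.
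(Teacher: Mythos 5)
Your parts (i), (ii) and (iv) are correct, and (ii) and (iv) take genuinely different routes from the paper: for ``finite implies soluble'' the paper simply cites Herstein's theorem that a finite group with an abelian maximal subgroup is soluble, whereas you exploit the observation that $\hat M$ has no proper nontrivial $\hat K$-invariant subgroup at all (a nice strengthening of minimal normality) together with Thompson's fixed-point-free automorphism theorem; for (iv) the paper invokes Ore's theorem (conjugacy of maximal subgroups of a finite soluble group is detected by their cores) to see that the core-free maximal subgroups $\hat K$ and $\hat L$ cannot coexist in a finite soluble $\hat G$, whereas you classify the maximal subgroups of $\hat G$ directly. Your direct exclusion of $p=2$ via the relation $[m,m^{\bar k}]^{\bar k}=[m,m^{\bar k}]^{-1}$ is also a complete and pleasingly elementary substitute for part of what the paper extracts from Herzog's argument.

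The genuine gap is the simplicity of $\hat M$ in part (iii), and you have correctly identified it as the crux: what you offer there is only a strategy (show that the $p$ conjugates of a putative proper nontrivial normal subgroup $R\trianglelefteq\hat M$ pairwise commute, then run a diagonal argument inside $C_{\hat M}(\bar k)=1$), and the decisive step --- that the conjugates commute --- is asserted as a goal, not proved. This cannot be waved through: in the infinite setting a minimal normal subgroup need not be a direct product of simple groups (characteristically simple infinite groups such as McLain's groups have no minimal normal subgroups), so there is no structural decomposition of $\hat M$ available for free, and your own solubility-implies-finiteness argument shows that $\hat M$ is non-abelian here, which blocks the easy abelian-case tricks. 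The paper closes exactly this hole by citing Smith's theorem on infinite groups with a finite maximal subgroup, which yields that $\hat M$ is a direct product of isomorphic infinite simple groups, and then follows the argument of Herzog et al.\ to reduce the number of factors to one (and to obtain the oddness of $|\hat K|$). Without such an external input, or a complete proof that the conjugates of $R$ commute, the first assertion of (iii) remains unestablished; note also that the paper's claim in (i)--(ii) that $\hat M$ is the \emph{unique} minimal normal subgroup in the infinite case leans on the same structure theorem.
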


\begin{proof}
As $Z(M)$ is not a maximal subgroup of $M$, we deduce that $K$ is not normal in $G$. However, $Z(M) = K \cap M$ is a maximal subgroup of $K$, and so $\mathrm{Core}_G(K) = Z(M)$. Thus $\hat{G}$ is primitive with a point stabiliser $\hat{K}$ of prime order, and $\hat G = \hat M \sd \hat K$. Furthermore, each nontrivial normal subgroup $\hat N$ of $\hat{G}$ contained in $\hat{M}$ intersects $\hat{K}$ trivially, and $\hat N \hat K = \hat G$. We therefore deduce that $\hat N = \hat M$, and so $\hat M$ is a minimal normal subgroup of $\hat G$.

Now, each finite group with an abelian maximal subgroup is soluble \cite{herstein}. Hence if $\hat{G}$ is finite, then it is soluble, as is $G$. Hence in this case $\hat M$ is the unique minimal normal subgroup of $\hat{G}$. If instead $\hat{G}$ is infinite, then since $\hat{K}$ is finite, \cite[Theorem 1.1]{smith} shows that $\hat{M}$ is a direct product of isomorphic infinite simple groups, and is again the unique minimal normal subgroup of $\hat G$. Hence $G$ is insoluble. Arguing as in the proof of \cite[Theorem 4.1]{herzog}, we deduce that $\hat M$ is simple and $|\hat K|$ is odd. Thus we have proved (i)--(iii).

Finally, suppose that $G$ has a maximal subgroup $L$ as in (iv). As $\hat M$ is maximal and the unique minimal normal subgroup of $\hat G$, the maximal subgroup $\hat L$ is core-free. Additionally, $\hat K \cap \hat M = 1 \ne \hat L \cap \hat M$, and thus the core-free maximal subgroup $\hat K$ is not conjugate to $\hat L$. Theorem~\ref{thm:finsolconjmax} therefore implies that $\hat G$ is either infinite or insoluble, and (iv) follows from (ii).
\end{proof}

\begin{lem}
\label{lem:rmaxdist}
Suppose that $G$ contains a normal, non-abelian maximal subgroup $M$. In addition, let $x \in M \setminus Z(M)$ and $y \in G \setminus M$. Finally, suppose that $R$ is a maximal subgroup of $G$ containing $y$, with $C_{R \cap M}(y) < R \cap M \ne Z(M)$. Then $d(x,y) \le 3$.
\end{lem}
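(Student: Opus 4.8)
The plan is to produce the bound by finding a single vertex $h \in R \cap M$ that is adjacent to $y$ and lies in $M \setminus Z(M)$; then $x$ and $h$ are connected by a path of length at most $2$ inside the induced subgraph on $M \setminus Z(M)$, and appending the edge $h \sim y$ gives a path of length at most $3$. So everything reduces to choosing $h$ with the right two properties simultaneously.

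First I would show that $R \cap M \not\le Z(M)$. Since $y \in R \setminus M$ we have $R \ne M$, so $Z(M)$, $M$, and $R$ are distinct proper subgroups. By hypothesis $R \cap M \ne Z(M)$, which forces either $R \cap M \not\le Z(M)$ (the desired conclusion) or $Z(M) \not\le R \cap M$. In the second case, as $Z(M) \le M$, this says $Z(M) \not\le R$; now $M$ is a normal maximal subgroup, $Z(M) = Z(M) \cap M$ is normal in $G$, and $M, R$ are maximal, so applying Lemma~\ref{lem:threemaxint} to the triple $(Z(M), M, R)$ yields $M \cap R \not\le Z(M)$. Hence $R \cap M \not\le Z(M)$ in every case.

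Next I would select the intermediate vertex. Both $(R \cap M) \cap Z(M)$ and $C_{R \cap M}(y)$ are proper subgroups of $R \cap M$: the former because $R \cap M \not\le Z(M)$ (from the previous step), the latter by the hypothesis $C_{R \cap M}(y) < R \cap M$. Since a group is never the union of two proper subgroups, there exists $h \in (R \cap M) \setminus (Z(M) \cup C_{R \cap M}(y))$. Then $h \in M \setminus Z(M)$ and $[h,y] \ne 1$, and since $\langle h, y \rangle \le R < G$, we conclude $h \sim y$ in $\nc(G)$. Finally, $x$ and $h$ both lie in $M \setminus Z(M)$, and $M$ is a proper non-abelian subgroup of $G$, so Proposition~\ref{prop:ncbasics}\ref{propernc} gives $d(x,h) \le 2$ (with $d(x,h)=0$ in the degenerate case $x=h$). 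Therefore $d(x,y) \le d(x,h) + d(h,y) \le 3$.

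I do not anticipate a serious obstacle; the one point requiring care is that the connecting element $h$ must simultaneously avoid $Z(M)$ (so that the diameter-$2$ bound for the induced subgraph on $M \setminus Z(M)$ applies to the pair $\{x,h\}$) and avoid $C_{R\cap M}(y)$ (so that $h \sim y$). This is precisely why the preliminary step establishing $R \cap M \not\le Z(M)$ must be carried out before the two-proper-subgroups argument is invoked, and it is where the hypothesis $R \cap M \ne Z(M)$ gets used.
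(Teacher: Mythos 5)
Your proof is correct and follows essentially the same route as the paper's: establish $R \cap M \not\le Z(M)$ via the dichotomy and Lemma~\ref{lem:threemaxint} applied to $(Z(M),M,R)$, then pick $h \in (R\cap M)\setminus(Z(M)\cup C_{R\cap M}(y))$ using the two-proper-subgroups fact, and combine $h \sim y$ with $d(x,h)\le 2$ from Proposition~\ref{prop:ncbasics}\ref{propernc}. No gaps.
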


\begin{proof}
Either $Z(M) \not\le R$ or $R \cap M \not\le Z(M)$, and if the former holds, then applying Lemma~\ref{lem:threemaxint} to $(Z(M),M,R)$ shows that the latter also holds. Thus, in general, $R \cap M \not\le Z(M)$, and so $R \cap Z(M) < R \cap M$. Since $C_{R \cap M}(y) < R \cap M$, there exists $h \in R \cap M$ with $h \notin C_{R \cap M}(y) \cup Z(M)$. We see that $h \sim y$, while $d(x,h) \le 2$ by Proposition~\ref{prop:ncbasics}\ref{propernc}. Therefore, $d(x,y) \le 3$.
\end{proof}

\begin{lem}
\label{lem:primgpint}
Suppose that $G$ contains a normal, non-abelian maximal subgroup $M$, with $Z(G) < Z(M)$. In addition, suppose that $G$ contains maximal subgroups $K$ and $L$, with $K \cap M = Z(M) \not\le L$ and $L \not\trianglelefteq G$. Then the following statements hold.
\begin{enumerate}[label={(\roman*)},font=\upshape]
\item $U:=L \cap M$ is a normal subgroup of $G$.
\item Let $s \in L \cap (K \setminus Z(M))$. Then $S:=\{[s,r] \mid r \in Z(M)\} \trianglelefteq G$, and $SU = M$.
\item \label{primgpint2} Each element of $K \setminus Z(M)$ lies in some $G$-conjugate of $L$.
\end{enumerate}
\end{lem}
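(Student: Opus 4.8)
The plan is to extract the structural backbone first. Since $Z(M)$ is characteristic in $M\trianglelefteq G$, it is normal in $G$; as $L$ is maximal with $Z(M)\not\le L$, this forces $G=LZ(M)$, and Dedekind's modular law (valid because $Z(M)\le M$) gives $U:=L\cap M$ with $UZ(M)=M$ and $U_0:=U\cap Z(M)=L\cap Z(M)\trianglelefteq G$. Proposition~\ref{prop:gkzmprim}\ref{gkzmprim1} contributes the arithmetic: $\hat G=\hat M\rtimes\hat K$ with $|\hat K|=p$ prime, so $[G:M]=p$, $G=MK$, and $K=\langle s\rangle Z(M)$. Part~(i) is then immediate: $L$ normalises $U=L\cap M$ (because $M\trianglelefteq G$ gives $U^{l}=L\cap M^{l}=L\cap M$), while $Z(M)$ centralises $U\le M$; as $G=LZ(M)$, it follows that $U\trianglelefteq G$.

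For part~(ii), I would first handle the normality of $S$ by identifying it with $[G,Z(M)]$. The map $r\mapsto[s,r]$ is a homomorphism from the abelian group $Z(M)$ into $Z(M)$, so $S$ is a subgroup of $Z(M)$; and using $G=MK$, the fact that $M$ centralises $Z(M)$, and $K=\langle s\rangle Z(M)$, one verifies that every commutator $[g,r]$ with $g\in G$ and $r\in Z(M)$ already lies in $S$. Hence $S=[G,Z(M)]$, which is normal in $G$ as a commutator of normal subgroups. The substantive claim is $SU=M$. Via the modular law this is equivalent to $SU_0=Z(M)$, i.e.\ to surjectivity of $1-\alpha$ on $Z(M)/U_0$, where $\alpha$ denotes conjugation by $s$.

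This surjectivity is the step I expect to be the main obstacle: when $Z(M)$ is infinite, a direct computation with $1-\alpha$ need not succeed. My plan is to bypass it by passing to $\bar G:=G/U$. There $\bar M:=M/U\cong Z(M)/U_0$ is abelian, $\bar L:=L/U$ has order $[L:L\cap M]=[G:M]=p$, and $\bar M\cap\bar L=1$, $\bar M\bar L=\bar G$ with $\bar L$ maximal. A short argument shows $\bar M$ is minimal normal in $\bar G$ (any nontrivial $\bar G$-normal subgroup of $\bar M$, together with $\bar L$, generates $\bar G$), so $\bar M$ is an irreducible $\bar L$-module; and it is a \emph{nontrivial} module, since $\bar L\trianglelefteq\bar G$ would be equivalent to $L\trianglelefteq G$, which is false. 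A nontrivial irreducible module for a group of prime order has no trivial quotient, so $[\bar L,\bar M]=\bar M$. As $[\bar L,\bar M]$ is precisely the image of $S$ in $\bar M$, this yields $SU=M$. The conceptual point is that factoring out $U$ converts the awkward surjectivity of $1-\alpha$ on a possibly infinite group into the irreducibility of a single module for $C_p$.

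Finally, I would derive part~(iii) from~(ii) by a direct calculation in $G$. Given $k\in K\setminus Z(M)$, write $k=z_0s^{j}$ with $z_0\in Z(M)$ and $p\nmid j$ (possible since $\hat k$ generates $\hat K\cong C_p$). Conjugating by $r\in Z(M)$ and moving the central commutator past $s^{j}$ gives $k^{r}=w\,s^{j}$ with $w=z_0[s^{j},r]^{s^{-j}}\in Z(M)$; since $s^{j}\in L$, we have $k^{r}\in L$ exactly when $w\in L\cap Z(M)=U_0$. As $p\nmid j$ and $\alpha^{p}=1$, the set $\{[s^{j},r]:r\in Z(M)\}$ equals $S$, and $S\trianglelefteq G$, so as $r$ varies $w$ runs over the coset $z_0S$. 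Because $SU_0=Z(M)$ by~(ii), this coset meets $U_0$, so some $r$ gives $w\in U_0$ and therefore $k\in L^{r^{-1}}$. This shows that every element of $K\setminus Z(M)$ lies in a $G$-conjugate of $L$, completing the proof.
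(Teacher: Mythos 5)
Your proof is correct, and although it shares the paper's overall architecture for part (i) and for the normality of $S$ (which the paper gets from $S\le Z(M)$ and $S^s=S$, and you get from the cleaner identification $S=[G,Z(M)]$), the two decisive steps are handled by genuinely different arguments. For $SU=M$, the paper also passes to $G/U$, but then picks a concrete $r\in Z(M)\setminus L$, uses primitivity to show $C_{G/U}(Ur)=M/U$, deduces $[s,r]\notin U$ and hence $S\not\le L$, and finishes with $SL=G$ and the modular law. You instead observe that $M/U$ is an abelian minimal normal subgroup complemented by $L/U\cong C_p$, that $[L/U,M/U]$ is a $G/U$-invariant subgroup which is nontrivial precisely because $L\not\trianglelefteq G$, and conclude by irreducibility; both routes exploit the same facts ($U\trianglelefteq G$, $L$ maximal and non-normal) and neither needs finiteness, but yours packages the argument as module theory where the paper's is a direct maximality computation. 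For (iii), the paper tailors the element of $L\cap(K\setminus Z(M))$ to each $k$ (writing $k=zf$ and applying (ii) to $f^{-1}$, then using the identity $z[f^{-1},r]f^r=zf$), whereas you fix a single $s$ and need the additional observation that $\{[s^j,r]:r\in Z(M)\}=S$ for $p\nmid j$ --- which is correct, via $1-\alpha^j=(1-\alpha)(1+\alpha+\cdots+\alpha^{j-1})$ together with $jj'\equiv 1\pmod p$ and $\alpha^p=1$. The one point you should make explicit: part (iii) presupposes that $L\cap(K\setminus Z(M))$ is nonempty (your $s$ must lie in $L$ for ``$s^j\in L$'' to be available), and you never verify this. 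It follows in one line from facts you already established --- take $k_0\in K\setminus Z(M)$ and write $k_0=\ell z$ with $\ell\in L$, $z\in Z(M)$ using $G=LZ(M)$; then $\ell=k_0z^{-1}$ lies in $K\setminus Z(M)$ because $Z(M)\le K$ --- but as written it is an unstated assumption.
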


\begin{proof}\leavevmode

\noindent (i) As $U \trianglelefteq L$ and $U \le M = C_M(Z(M))$, we obtain $U \trianglelefteq Z(M)L = G$.

\medskip

\noindent (ii) Let $x,y \in Z(M)$. As $Z(M) \trianglelefteq G$, it follows that $[s,y] \in Z(M)$. We calculate $[s,x][s,y] = [s,yx]$, and it follows that $S \le Z(M)$, and hence $S \trianglelefteq M$. Additionally, $x^s \in Z(M)$, and so $[s,x]^s = [s,x^s] \in S$. Thus $S^s = S$. As $s \in K \setminus Z(M)$, we see that $s \notin M$ and $S \trianglelefteq \langle M,s \rangle = G$.

Now, by (i), $U \trianglelefteq G$. For a subgroup $T$ of $G$, let $\overline T:=TU/U$, and for an element $g \in G$, let $\overline g:=Ug$. We observe that $U$ is maximal in $L$, and since $L \not\trianglelefteq G$, it follows that $\overline G$ is primitive with point stabiliser $\overline L$. Since $|\overline G|$ is not prime, it follows that $Z(\overline G) = 1$.

Let $r \in Z(M) \setminus L$. Then $C_{\overline G}(\overline r) = \overline M$. As $s \notin M$, we deduce that $\overline{[s,r]} = [\overline s, \overline r] \ne 1$, and thus $[s,r] \notin U = L \cap M$. Since $S \le M$, it follows that $S \not\le L$. Thus $SL = G$, and we conclude that $SU = S(L \cap M) = SL \cap M = G \cap M = M$.

\medskip

\noindent (iii) Let $k \in K \setminus Z(M)$. As $G = Z(M)L$, it follows that $k = zf$ for some ${z \in Z(M)}$ and some $f \in L \setminus Z(M)$. In fact, since $Z(M) \le K$, we see that $f = z^{-1}k \in K \setminus Z(M)$. Hence $f^{-1} \in L \cap (K \setminus Z(M))$.

Finally, let $S:={\{[f^{-1},r] \mid r \in Z(M)\}}$. As $U \trianglelefteq G$ by (i), we deduce that $Z(M)U/U \le M/U$, which is equal to $SU/U$ by (ii). Thus there exists $r \in Z(M)$ such that $U z^{-1} = U[f^{-1},r]$, and hence $[f^{-1},r]z = z[f^{-1},r] \in U$. As ${U = U^r \le L^r}$, it follows that $k = zf = z[f^{-1},r]f^r \in L^r$.
\end{proof}

We now bound distances in $\nc(G)$ between elements of $M \setminus Z(M)$ and elements of $G \setminus M$.

\begin{prop}
\label{prop:xydist}
Suppose that $G$ is $2$-generated and contains a normal, non-abelian maximal subgroup $M$, with $Z(G) < Z(M)$. In addition, let $x \in M \setminus Z(M)$ and $y \in G \setminus M$.
\begin{enumerate}[label={(\roman*)},font=\upshape]
\item \label{xydist1} $x$ and $y$ lie in distinct connected components of $\nc(G)$ if and only if $K \cap M = Z(M)$ for every maximal subgroup $K$ of $G$ distinct from $M$.
\item \label{xydist2} If $x$ and $y$ lie in the same connected component of $\nc(G)$, then $d(x,y) \le 4$.
\item \label{xydist3} If $d(x,y) = 4$, then $\Phi(G) = Z(M)$, and $G/Z(M)$ is primitive with unique minimal normal subgroup $M/Z(M)$, which is infinite and simple. Moreover, each maximal subgroup $K$ of $G$ containing $y$ satisfies $K \cap M = Z(M)$, and $K/Z(M)$ is a point stabiliser of $G/Z(M)=(M/Z(M)):(K/Z(M))$ of odd prime order. Additionally, $G$ contains a maximal subgroup $L$ such that $Z(M) < L \cap M$.
\end{enumerate}
\end{prop}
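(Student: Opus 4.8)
The plan is to leverage Proposition~\ref{prop:xzdist}, which already controls the distance between $x$ and $z$, by fixing an element $z \in Z(M) \setminus Z(G)$ (which exists since $Z(G) < Z(M)$). By Lemma~\ref{lem:zneighbs}, the neighbourhood of $z$ in $\nc(G)$ is exactly $G \setminus M$; in particular $z \sim y$, so $y$ and $z$ always lie in the same connected component, and $d(x,y) \le d(x,z) + 1$. Statement~\ref{xydist1} is then immediate: $x$ and $y$ lie in distinct components if and only if $x$ and $z$ do, which by Proposition~\ref{prop:xzdist}\ref{xzdist1} happens precisely when $K \cap M = Z(M)$ for every maximal subgroup $K \ne M$.

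For~\ref{xydist2}, the key observation is that $d(x,w) \le 2$ for every $w \in M \setminus Z(M)$, by Proposition~\ref{prop:ncbasics}\ref{propernc}; hence if $y$ has any neighbour in $M \setminus Z(M)$, then $d(x,y) \le 3$. I would therefore assume $d(x,y) \ge 4$ and first show that every maximal subgroup $R$ containing $y$ satisfies $R \cap M = Z(M)$. If not, then $R \cap M \not\le Z(M)$ by applying Lemma~\ref{lem:threemaxint} to $(Z(M),M,R)$; if also $y \notin Z(R)$, then $C_{R \cap M}(y) < R \cap M$ and Lemma~\ref{lem:rmaxdist} gives $d(x,y) \le 3$, a contradiction. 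The remaining possibility, $y \in Z(R)$ with $R = C_G(y)$, is handled using that $y$ is non-isolated (indeed $y \sim z$), so by Proposition~\ref{prop:ncbasics}\ref{isolvertnc} it lies in some maximal subgroup $L_0$ with $y \in L_0 \setminus Z(L_0)$: when $R \trianglelefteq G$ one applies Lemma~\ref{lem:zneighbs} to $R$ to produce a neighbour of $y$ in $M \setminus Z(M)$, and otherwise one reduces to the non-central case via $L_0$. Once every $R \ni y$ meets $M$ in $Z(M)$, the same-component hypothesis and~\ref{xydist1} yield a maximal $L$ with $L \cap M \ne Z(M)$; since $\Phi(G) \le K \cap M = Z(M)$ for $K \ni y$ while $L \cap M \not\le Z(M)$, we have $L \cap M \ne \Phi(G)$, so Proposition~\ref{prop:xzdist}\ref{xzdist2}--\ref{xzdist3} rules out $d(x,z) = 4$. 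Thus $d(x,z) \le 3$ and $d(x,y) \le 4$, proving~\ref{xydist2} and showing that $d(x,y) = 4$ forces every maximal $R \ni y$ to satisfy $R \cap M = Z(M)$.

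For~\ref{xydist3}, assume $d(x,y) = 4$. Fixing a maximal $K \ni y$ with $K \cap M = Z(M)$ and applying Proposition~\ref{prop:gkzmprim}\ref{gkzmprim1} gives that $G/Z(M)$ is primitive with unique minimal normal subgroup $M/Z(M)$ and point stabiliser $K/Z(M)$ of prime order, so $G/Z(M) = (M/Z(M))\sd(K/Z(M))$; primitivity also forces $\Phi(G) \le Z(M)$. The same-component hypothesis supplies a maximal $L$ with $L \cap M \ne Z(M)$, and I would show such $L$ may be taken with $Z(M) < L \cap M$ by excluding maximal subgroups $T$ with $Z(M) \not\le T$ (which also gives the reverse inclusion $Z(M) \le \Phi(G)$, hence $\Phi(G) = Z(M)$). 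Such a $T$ has $T \cap M \not\le Z(M)$ by Lemma~\ref{lem:threemaxint}; if $T \not\trianglelefteq G$, then Lemma~\ref{lem:primgpint}\ref{primgpint2} (with $K$ and $T$ in the roles of $K$ and $L$) places $y \in K \setminus Z(M)$ in a conjugate $T^g$ with $T^g \cap M \not\le Z(M)$, contradicting the conclusion of~\ref{xydist2}; the normal case is excluded by a direct analysis of the quotient $G/(T \cap M)$. Finally, Proposition~\ref{prop:gkzmprim}\ref{gkzmprim4} applied to $K$ and this $L$ forces $G/Z(M)$ to be infinite, whence Proposition~\ref{prop:gkzmprim}\ref{gkzmprim3} gives that $M/Z(M)$ is infinite simple and $K/Z(M)$ has odd prime order, completing~\ref{xydist3}.

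The step I expect to be the main obstacle is the case $y \in Z(R)$ where $R = C_G(y)$ is \emph{not} normal in $G$: here neither Lemma~\ref{lem:rmaxdist} nor Lemma~\ref{lem:ncmaxnormcombined} applies directly, since both require a non-central element, and one must instead exploit the primitive quotient $G/Z(M)$ together with the non-isolation of $y$ (via Proposition~\ref{prop:ncbasics}\ref{isolvertnc} and Lemma~\ref{lem:primgpint}) to manufacture a suitable short path. Closely related is the exclusion of normal maximal subgroups $T$ with $Z(M) \not\le T$ in part~\ref{xydist3}, which I expect to require showing that $G/(T \cap M)$ is abelian and then either locating a neighbour of $y$ in $M \setminus Z(M)$ or contradicting the structure of $G/Z(M)$.
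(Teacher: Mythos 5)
Your overall skeleton matches the paper's: fix $z \in Z(M)\setminus Z(G)$, use Lemma~\ref{lem:zneighbs} to get $y \sim z$ (so part~(i) reduces to Proposition~\ref{prop:xzdist}\ref{xzdist1}), use Lemma~\ref{lem:rmaxdist} and Lemma~\ref{lem:centremaxmax} to force $R \cap M = Z(M)$ for maximal subgroups through $y$, use Lemma~\ref{lem:primgpint}\ref{primgpint2} for non-normal maximal subgroups avoiding $Z(M)$, and use Proposition~\ref{prop:gkzmprim} for the structural conclusions. Your route to part~(ii) via the contrapositive of Proposition~\ref{prop:xzdist}\ref{xzdist3} is sound. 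However, there is a genuine gap in the step you relegate to ``a direct analysis of the quotient $G/(T\cap M)$'': the exclusion of a \emph{normal} maximal subgroup $T$ with $Z(M)\not\le T$ (equivalently, the proof that $d(x,y)\le 3$ when such a $T$ exists). The analysis you sketch does not produce a contradiction: $G/(T\cap M)$ is indeed abelian (it embeds in $G/T \times G/M$, a product of two cyclic groups of prime order), but this is perfectly consistent with the structure of $G/Z(M)$, and $y$ need not have any neighbour in $M\setminus Z(M)$ in this configuration. The paper's Case~(b)($\gamma$) instead builds an explicit path $x\sim s\sim f\sim y$ of length $3$: it applies Lemma~\ref{lem:threemaxint} twice (to $(K,M,T)$ and then to $(M,T,K)$) to find $t \in (K\cap T)\setminus M$, feeds $t$ into Lemma~\ref{lem:centremaxmax}\ref{centremaxmax2} to get $s \in T\cap M$ with $x\sim s\sim t$, and then replaces $t$ by an element $f \in K\cap T$ centralising neither $s$ nor $y$. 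Crucially the middle vertex $f$ lies in $K\cap T$ and generally \emph{not} in $M$, so neither of your two proposed endgames can reach it. Without this case, you cannot conclude $Z(M)\le\Phi(G)$, nor that the maximal subgroup $L$ in part~(iii) satisfies $Z(M) < L\cap M$, nor (via Proposition~\ref{prop:gkzmprim}\ref{gkzmprim4}) that $G/Z(M)$ is infinite.

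By contrast, the obstacle you flag as the main one --- a maximal $R \ni y$ with $y \in Z(R)$ and $R$ non-normal --- is avoidable and is not where the real difficulty lies. For part~(ii) you only ever need the single maximal subgroup $L_0$ with $y \in L_0\setminus Z(L_0)$ supplied by Proposition~\ref{prop:ncbasics}\ref{isolvertnc}, since one subgroup meeting $M$ in $Z(M)$ already gives $\Phi(G)\le Z(M)$. For the claim in part~(iii) that \emph{every} maximal $K\ni y$ satisfies $K\cap M = Z(M)$, once $\Phi(G)=Z(M)$ is known one has $Z(M)=\Phi(G)\le K$, so Lemma~\ref{lem:nonabmax} applied to $(M,K)$ forces $Z(K)\le K\cap M\le M$ and hence $y\notin Z(K)$; the paper's alternative device is simply to choose $K$ containing both $y$ and $z$, so that $z\in (K\cap M)\setminus C_G(y)$ witnesses $C_{K\cap M}(y)<K\cap M$ directly. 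You should also note that applying Lemma~\ref{lem:centremaxmax} inside a conjugate $T^g$ requires first showing $Z(T^g)\le Z(G)$, which the paper derives from the primitivity of $G/Z(M)\cong T/(T\cap Z(M))$.
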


\begin{proof}
First, Proposition~\ref{prop:nomaxab} shows that $G$ contains no abelian maximal subgroups. Let $z \in Z(M) \setminus Z(G)$. Then $y \sim z$ by Lemma~\ref{lem:zneighbs}, and so $x$ and $y$ lie in the same connected component of $\nc(G)$ if and only if $x$ and $z$ lie in the same component. Thus (i) follows from Proposition~\ref{prop:xzdist}\ref{xzdist1}.

Assume now that $x$ and $y$ lie in the same connected component of $\nc(G)$.

\medskip

\noindent \textbf{Case (a)}: $\Phi(G) = Z(M)$. Let $K$ be a maximal subgroup of $G$ containing $y$, so that $K \ne M$, and suppose that $d(x,y) > 3$. Since $K \cap M$ contains $\Phi(G) = Z(M) > Z(G)$, we deduce from Lemma~\ref{lem:nonabmax}, applied to $(M,K)$, that $Z(K) \le K \cap M$. As $y \notin M$, it follows that $y \notin Z(K)$. Thus applying Lemma~\ref{lem:centremaxmax}\ref{centremaxmax1} to $(y,K,M)$ yields $C_{K \cap M}(y) < K \cap M$. Since $d(x,y) > 3$, Lemma~\ref{lem:rmaxdist} shows that $K \cap M = Z(M)$. On the other hand, as $x$ and $y$ lie in the same component of $\nc(G)$, we see from (i) that there exists a maximal subgroup $L$ of $G$ with $L \ne M$ and $L \cap M \ne Z(M)$. This means that $\Phi(G) = Z(M) < L \cap M$, and so Proposition~\ref{prop:xzdist} gives $d(x,z) \le 3$. As $y \sim z$, it follows that $d(x,y) = 4$. Furthermore, as $K \cap M = Z(M) < L \cap M$, Proposition~\ref{prop:gkzmprim}\ref{gkzmprim4} shows that $G/Z(M)$ is infinite, and hence the claims about $G/Z(M)$, $M/Z(M)$ and $K/Z(M)$ in (iii) follow from Proposition~\ref{prop:gkzmprim}\ref{gkzmprim1}--\ref{gkzmprim3}.

\medskip

\noindent \textbf{Case (b)}: $\Phi(G) \ne Z(M)$. To complete the proof of (ii) and (iii), it suffices to show that $d(x,y) \le 3$. Since $y \sim z$, some maximal subgroup $K$ of $G$ contains $y$ and $z$. Note that $z \in (K \cap M) \setminus C_G(y)$, and so $C_{K \cap M}(y) < K \cap M$. By Lemma~\ref{lem:rmaxdist}, we may assume that $K \cap M = Z(M)$, and so $\Phi(G) < Z(M)$. Hence some maximal subgroup $L$ satisfies $Z(M) \not\le L$.

We will show that $Z(L) \le Z(G)$. Observe that $G/Z(M) = Z(M)L/Z(M) \cong {L/(L \cap Z(M))}$. Since $G/Z(M)$ is primitive by Proposition~\ref{prop:gkzmprim} (and $|G/Z(M)|$ is not prime), $L/(L \cap Z(M))$ has trivial centre. Thus $Z(L) \le L \cap Z(M) \le M$. As $Z(M) \not\le L \cap M$, the contrapositive of Lemma~\ref{lem:nonabmax}, applied to $(L,M)$, shows that $Z(L) = Z(L) \cap M \le Z(G)$, as claimed.

We divide the remainder of Case (b) into three (not all mutually exclusive) sub-cases.

\medskip

\noindent \textbf{Case (b)($\boldsymbol{\alpha}$)}: $y \in L^g$ for some $g \in G$. Since $Z(L^g) \le Z(G)$, applying Lemma~\ref{lem:centremaxmax}\ref{centremaxmax1} to $(y,L^g,M)$ yields $C_{L^g \cap M}(y) < L^g \cap M \ne Z(M)$. Thus $d(x,y) \le 3$ by Lemma~\ref{lem:rmaxdist}.

\medskip

\noindent \textbf{Case (b)($\boldsymbol{\beta}$)}: $L \not\trianglelefteq G$. Since $y \in K \setminus Z(M)$, it follows from Lemma~\ref{lem:primgpint}\ref{primgpint2} that $y \in L^g$ for some $g \in G$. Thus by the previous sub-case, $d(x,y) \le 3$.

\medskip

\noindent \textbf{Case (b)($\boldsymbol{\gamma}$)}: $L \trianglelefteq G$ and $y \notin L$. Applying Lemma~\ref{lem:centremaxmax}\ref{centremaxmax1} to $(y,K,L)$ shows that $r \sim y$ for some $r \in K \cap L$, and that $C_{K \cap L}(y) < K \cap L$. If $x \in L$, then (since $Z(L) \le Z(G)$) Proposition~\ref{prop:ncbasics}\ref{propernc} yields $d(x,r) \le 2$, and so $d(x,y) \le 3$.

If instead $x \notin L$, then since $K \cap M = Z(M) \not\le L$, applying Lemma~\ref{lem:threemaxint} to $(K,M,L)$ shows that $L \cap M \not\le K$. Therefore, applying the same proposition to $(M,L,K)$ yields $K \cap L \not\le M$. Thus there exists $t \in (K \cap L) \setminus M$. In particular, $t \notin Z(G)$, and hence $t \notin Z(L)$. It follows from Lemma~\ref{lem:centremaxmax}\ref{centremaxmax2}, applied to the triple $(x,M,L)$ and the element $t$, that $x \sim s \sim t$ for some $s \in L \cap M$. As $s \sim t \in K \cap L$, we see that $C_{K \cap L}(s) < K \cap L$. Additionally, $C_{K \cap L}(y) < K \cap L$ by the previous paragraph. Hence there exists an element $f \in K \cap L$ that centralises neither $s$ nor $y$. Since $y \in K$, we see that $x \sim s \sim f \sim y$ and $d(x,y) \le 3$.
\end{proof}

\begin{ques}
\label{ques:xy}
Does there exist an infinite $2$-generated group $G$ that contains a normal, non-abelian maximal subgroup $M$ with $Z(G) < Z(M)$, that satisfies all necessary conditions given in Proposition~\ref{prop:xydist}\ref{xydist3}? If yes, is $d(x,y) = 4$ possible for $x \in M \setminus Z(M)$ and $y \in G \setminus M$?
\end{ques}

\begin{rem}
\label{rem:dist4}
Let $x \in M \setminus Z(M)$, $z \in Z(M) \setminus Z(G)$ and $y \in G \setminus M$. Propositions~\ref{prop:xzdist} and \ref{prop:xydist} show that if $d(x,z) = 4$, then $K \cap M = \Phi(G) \ne Z(M)$ for each maximal subgroup $K$ of $G$ distinct from $M$, while if $d(x,y) = 4$, then there exists a maximal subgroup $L \ne M$ such that $Z(M) = \Phi(G) < L \cap M$. Hence $d(x,z)$ and $d(x,y)$ cannot both be equal to $4$.
\end{rem}

Our next result specifies exactly when $\nc(G)$ is connected, assuming that $G$ contains a maximal subgroup $M$ as above. In the next section, we will consider in more detail the diameters of the connected components of this graph, and discuss several concrete examples.

\begin{lem}
\label{lem:maxnormconnected}
Suppose that $G$ is $2$-generated and contains a normal, non-abelian maximal subgroup $M$, with $Z(G) < Z(M)$. Then $\nc(G)$ is not connected if and only if $K \cap M = Z(M)$ for every maximal subgroup $K$ of $G$ distinct from $M$, in which case Proposition~\ref{prop:gkzmprim} applies to $G$ for any choice of $K$. In particular, if $G$ is finite, then $\nc(G)$ is not connected if and only if $G$ satisfies Assumption~\ref{assump:22g}.
\end{lem}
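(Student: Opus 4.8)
The plan is to first reduce the connectedness question to the distance estimates already established, and then to settle the finite case using the classification of groups with two conjugacy classes of maximal subgroups.

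Since $Z(G) < Z(M)$ forces $Z(G) \le Z(M) \le M$, the vertex set splits as $A := M \setminus Z(M)$ and $B := (Z(M) \setminus Z(G)) \cup (G \setminus M)$, both nonempty. Proposition~\ref{prop:ncbasics}\ref{propernc} makes the induced subgraph on $A$ connected, while Lemma~\ref{lem:zneighbs} shows every $z \in Z(M) \setminus Z(G)$ is adjacent to all of $G \setminus M$, so $B$ is connected as well (its vertices all link through a fixed $y \in G \setminus M$). Hence $\nc(G)$ has at most two components, and it fails to be connected exactly when no edge joins $A$ to $B$, i.e.\ when some (equivalently any) $x \in M \setminus Z(M)$ and $z \in Z(M) \setminus Z(G)$ lie in distinct components. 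Proposition~\ref{prop:xzdist}\ref{xzdist1} rewrites this as the condition $K \cap M = Z(M)$ for every maximal $K \ne M$. Because $G$ is $2$-generated and non-abelian, hence non-cyclic, Proposition~\ref{prop:fingenmaxsub} guarantees that $M$ is not the only maximal subgroup; so at least one $K \ne M$ exists, and in the disconnected case every such $K$ meets $M$ in $Z(M)$, which is precisely the hypothesis of Proposition~\ref{prop:gkzmprim}.

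For the finite case it remains to show that ``$K \cap M = Z(M)$ for all maximal $K \ne M$'' is equivalent to Assumption~\ref{assump:22g}. Going forward, the equal intersections let me apply Lemma~\ref{lem:twoconjclass} to obtain $Z(M) = \Phi(G)$ and solubility; as $Z(M)$ is not maximal in the non-abelian $M$, no $K \ne M$ can be normal (cf.\ Proposition~\ref{prop:gkzmprim}), so $M$ is the unique normal maximal subgroup and Lemma~\ref{lem:twoconjclass} yields two conjugacy classes of maximal subgroups. Theorem~\ref{thm:solmaxclass} then supplies $G = P \sd Q$ with $Q$ cyclic acting irreducibly on $P/\Phi(P)$ and identifies $M = PR$; feeding $\Phi(G) = Z(M) = M \cap \Phi(P)Q$ into Theorem~\ref{thm:solmaxclass}\ref{solmaxclass2b} gives $R \trianglelefteq G$, so by \ref{solmaxclass2c} we have $M = P \times R$ and $\Phi(G) = \Phi(P) \times R$. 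Then $R \le Z(G)$, and comparing $Z(M) = Z(P) \times R$ with $\Phi(G) = \Phi(P) \times R$ forces $\Phi(P) = Z(P)$; finally $Z(M) \not\le Z(G)$ together with $R \le Z(G)$ forces $\Phi(P) = Z(P) \not\le Z(G)$, completing Assumption~\ref{assump:22g}. Conversely, under Assumption~\ref{assump:22g} the maximal subgroups are $PR$ and the conjugates of $\Phi(P)Q$ by Theorem~\ref{thm:solmaxclass}\ref{solmaxclass2a}, and I would rule out $\Phi(P)Q \trianglelefteq G$: normality would give (via a Frattini argument) $Q \trianglelefteq G$, so $G = P \times Q$ with $Q$ acting trivially on $P/\Phi(P)$; irreducibility then forces $P$ cyclic, contradicting $\Phi(P) = Z(P) < P$. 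Hence $PR$ is the unique normal maximal subgroup, so $M = PR$; Theorem~\ref{thm:solmaxclass}\ref{solmaxclass2c} makes $\Phi(G) = \Phi(P) \times R$ the common intersection of distinct maximal subgroups, and Theorem~\ref{thm:solmaxclass}\ref{solmaxclass2d} (its hypothesis $C_M(\Phi(G)) = M \not\le \Phi(G)$ holding since $\Phi(P) = Z(P)$) gives $\Phi(G) = Z(M)$, so $K \cap M = Z(M)$ for every maximal $K \ne M$.

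The connectedness dichotomy is essentially immediate once the two-block structure of the graph is in place; I expect the real work to lie in the finite bookkeeping, in particular excluding the normality of $\Phi(P)Q$ so as to identify the given $M$ with $PR$, and then chasing the product decompositions of $Z(M)$ and $\Phi(G)$ carefully enough to extract both $\Phi(P) = Z(P)$ and $\Phi(P) \not\le Z(G)$.
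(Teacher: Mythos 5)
Your proof is correct and follows essentially the same route as the paper: the connectedness dichotomy rests on Proposition~\ref{prop:xzdist} (the paper also cites Proposition~\ref{prop:xydist}, but your observation that Lemma~\ref{lem:zneighbs} makes $(Z(M)\setminus Z(G))\cup(G\setminus M)$ connected subsumes that), and the finite case rests on Lemma~\ref{lem:twoconjclass} and Theorem~\ref{thm:solmaxclass}. Your finite-case equivalence is argued in the direct rather than the paper's contrapositive direction, and it supplies one detail the paper leaves implicit, namely the Frattini argument ruling out $\Phi(P)Q \trianglelefteq G$ so that the given $M$ must equal $PR$.
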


\begin{proof}
Suppose first that $K \cap M = Z(M)$ for every maximal subgroup $K$ of $G$ distinct from $M$. Then Propositions~\ref{prop:xzdist} and~\ref{prop:xydist} show that there is no path in $\nc(G)$ between any element of $M \setminus Z(M)$ and any element of $(G \setminus M) \cup (Z(M) \setminus Z(G))$. Hence $\nc(G)$ is not connected. In addition, Proposition~\ref{prop:gkzmprim} applies to $G$ for any choice of $K$, and so $\mathrm{Core}_G(K) = Z(M)$. Thus $M$ is the unique normal maximal subgroup of $G$. It follows from Lemma~\ref{lem:twoconjclass} that if $G$ is finite, then it is soluble and contains exactly two conjugacy classes of maximal subgroups, and $Z(M) = \Phi(G)$ is the intersection of each pair of distinct maximal subgroups. Hence in this case $G$ satisfies all conditions of Theorem~\ref{thm:solmaxclass}\ref{solmaxclass1}. In particular, $G$ has exactly two conjugacy classes of maximal subgroups, and a unique non-cyclic Sylow subgroup $P$. Furthermore, Theorem~\ref{thm:solmaxclass}\ref{solmaxclass2} shows that $G$ has a nontrivial cyclic Sylow subgroup whose maximal subgroup is normal in $G$, and that $\Phi(P) = Z(P)$. As $Z(G) < Z(M) = \Phi(G)$, we also observe from this theorem that $\Phi(P) \not\le Z(G)$. Thus $G$ satisfies Assumption~\ref{assump:22g}.

If instead $G$ has a maximal subgroup $L \ne M$ with $L \cap M \ne Z(M)$, then Propositions~\ref{prop:xzdist} and~\ref{prop:xydist} imply that $\nc(G)$ is connected. Suppose that $G$ is finite in this case. To show that Assumption~\ref{assump:22g} does not hold for $G$, we may assume that $G$ has exactly two conjugacy classes of maximal subgroups, and a nontrivial cyclic Sylow subgroup whose maximal subgroup is normal in $G$. Then Theorem~\ref{thm:solmaxclass}\ref{solmaxclass2}\ref{solmaxclass2c}--\ref{solmaxclass2d} implies that $\Phi(G) = L \cap M \ne Z(M)$, and hence that the unique non-cyclic Sylow subgroup $P$ of $G$ does not satisfy $\Phi(P) = Z(P)$. As $\Phi(S) < S = Z(S)$ for each nontrivial Sylow subgroup $S \ne P$, Assumption~\ref{assump:22g} is not satisfied by $G$.
\end{proof}

We note that Proposition~\ref{prop:gkzmprim} and Lemma~\ref{lem:maxnormconnected} show that if $G$ is infinite and $\nc(G)$ is not connected, then $G/Z(M)$ is primitive with a unique minimal normal subgroup, which is infinite and simple, and each point stabiliser of $G/Z(M)$ has odd prime order.

\section{Non-central by non-cyclic groups}
\label{sec:normalcent}

In this section, we will determine upper bounds (or exact values in some cases) for the diameters of the connected components of $\nc(G)$ whenever $G$ satisfies the following assumption.

\begin{assump}
\label{assump:nc}
Assume that $G$ contains a normal subgroup $N$, such that $G/N$ is not cyclic and $N \not\le Z(G)$. Additionally, let $C:=C_G(N)$.
\end{assump}

Note that this assumption holds whenever $G/Z(G)$ has a proper non-cyclic quotient. Additionally, $C$ and $Z(C)$ are normal subgroups of $G$, and $Z(G) \le C < G$.

Throughout this section, we will implicitly use Proposition~\ref{prop:ncbasics}\ref{nodiamonenc}, which states that each nontrivial connected component of $\nc(G)$ has diameter at least $2$.

\begin{lem}
\label{lem:gnbasefacts}
Let $G$, $N$ and $C$ be as in Assumption~\ref{assump:nc}.
\begin{enumerate}[label={(\roman*)},font=\upshape]
\item \label{gnbasefacts1} Let $h,h' \in G \setminus C$. Then there exists $n \in N \setminus Z(G)$ such that $h \sim n \sim h'$.
\item \label{gnbasefacts2} Let $c \in C \setminus Z(G)$ and $g \in G \setminus Z(G)$. If $d(c,g) > 2$, then either $G/\langle c \rangle^G$ and $G/\langle g \rangle^G$ are both cyclic, or one of these quotients is cyclic and $[c,g] = 1$.
\end{enumerate}
\end{lem}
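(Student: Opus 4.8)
For part (i), the plan is to exploit the fact that, because $G/N$ is not cyclic, the non-generation condition is automatic for every element of $N$. Explicitly, Lemma~\ref{lem:noncycedge}, applied to the normal subgroup $N$, tells us that for $n \in N$ and any $w \in G$ the pair $\{n,w\}$ is an edge of $\nc(G)$ precisely when $[n,w] \ne 1$. So it suffices to produce $n \in N \setminus Z(G)$ with $[n,h] \ne 1$ and $[n,h'] \ne 1$. Since $h,h' \notin C = C_G(N)$, the centralisers $C_N(h) = N \cap C_G(h)$ and $C_N(h')$ are \emph{proper} subgroups of $N$, and a group is never the union of two proper subgroups, so some $n \in N$ lies outside both. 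Finally, since every central element commutes with $h$, we have $Z(G) \cap N \le C_N(h)$, so avoiding $C_N(h)$ automatically forces $n \notin Z(G)$. Thus $n$ is a vertex with $h \sim n \sim h'$, as required.

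For part (ii), I would argue by contradiction, assuming $d(c,g) > 2$ and deducing the stated dichotomy. The central tool is again Lemma~\ref{lem:noncycedge}, now applied to a normal closure: if $G/\langle x \rangle^G$ is not cyclic, then taking the normal subgroup to be $\langle x \rangle^G$ (which contains $x$) shows that, for every $w \in G$, the pair $\{x,w\}$ is an edge of $\nc(G)$ exactly when $[x,w] \ne 1$. I would then split into cases according to which of $G/\langle c \rangle^G$ and $G/\langle g \rangle^G$ is cyclic. Note that $G$ is non-abelian, since $c \in C \setminus Z(G)$ is nonempty, so Proposition~\ref{prop:noncomdiam} applies.

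First suppose neither quotient is cyclic. If $[c,g] \ne 1$ then $\{c,g\}$ is an edge of $\nc(G)$ and $d(c,g) = 1$; if $[c,g] = 1$ then Proposition~\ref{prop:noncomdiam} supplies a vertex $w$ with $[c,w] \ne 1$ and $[w,g] \ne 1$, whence $c \sim w \sim g$ and $d(c,g) \le 2$. Either way $d(c,g) \le 2$, contradicting the hypothesis; so this configuration cannot occur, and at least one quotient is cyclic. If both are cyclic, we are in the first alternative of the conclusion. Otherwise exactly one quotient, say $G/\langle g \rangle^G$, is non-cyclic; then $[c,g] \ne 1$ would make $\{c,g\}$ an edge (giving $d(c,g) = 1$), so in fact $[c,g] = 1$, placing us in the second alternative. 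The symmetric argument, with the roles of $c$ and $g$ exchanged, handles the case where $G/\langle c \rangle^G$ is the non-cyclic quotient.

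The arguments are short, and the only genuine subtlety — the main point to watch — is the bookkeeping rather than any deep obstacle. In part (i) one must notice the inclusion $Z(G) \cap N \le C_N(h)$ to guarantee that the chosen $n$ is a genuine vertex rather than a central element, and in part (ii) one must correctly match each case to the right alternative in the disjunctive conclusion, in particular using the hypothesis $d(c,g) > 2$ precisely to rule out the doubly non-cyclic configuration. I expect no further difficulty; indeed, the hypothesis $c \in C$ is never actually used in the proof of (ii), since the normal-closure argument works for an arbitrary non-central element $c$.
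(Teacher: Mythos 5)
Your proposal is correct and follows essentially the same route as the paper: part (i) uses Lemma~\ref{lem:noncycedge} together with the fact that $N$ is not the union of the two proper subgroups $C_N(h)$ and $C_N(h')$, and part (ii) combines Lemma~\ref{lem:noncycedge} (applied to normal closures) with Proposition~\ref{prop:noncomdiam}; the paper phrases (ii) as a contrapositive where you phrase it as a case analysis by contradiction, but the content is identical. Your two side remarks — that $n \notin Z(G)$ is forced because $n$ fails to centralise $h$, and that $c \in C$ is not needed in (ii) — are both accurate.
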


\begin{proof} To prove (i), note that since $h,h' \notin C$, each of $C_N(h)$ and $C_N(h')$ is a proper subgroup of $N$. Thus there exists $n \in N \setminus (C_N(h) \cup C_N(h'))$, and Lemma~\ref{lem:noncycedge} yields $h \sim n \sim h'$.

Next, we prove the contrapositive of (ii). If $[c,g] \ne 1$ and either $G/\langle c \rangle^G$ or $G/\langle g \rangle^G$ is not cyclic, then Lemma~\ref{lem:noncycedge} yields $d(c,g) = 1$. Suppose therefore that $[c,g] = 1$, with $G/\langle c \rangle^G$ and $G/\langle g \rangle^G$ both non-cyclic. Since the non-commuting graph of $G$ has diameter $2$ by Proposition~\ref{prop:noncomdiam}, there exists $k \in G \setminus Z(G)$ such that $(c,k,g)$ is a path in that graph. By Lemma~\ref{lem:noncycedge}, this is also a path in $\nc(G)$, and hence $d(c,g) \le 2$.
\end{proof}

We now split the investigation of the structure of $\nc(G)$ into three cases: $G/C$ non-cyclic; $G/C$ cyclic and $C$ abelian; and $G/C$ cyclic and $C$ non-abelian. In the second and third cases, we will see that more can be said if we know whether or not $C$ is a maximal subgroup of $G$.

\begin{lem}
\label{lem:gcnoncyc}
Let $G$, $N$ and $C$ be as in Assumption~\ref{assump:nc}, and suppose that $G/C$ is not cyclic. Then $\nc(G)$ is connected with diameter $2$ or $3$. Moreover, if $d(x,y)=3$ for $x,y \in G \setminus Z(G)$, then one of these elements lies in $C$, the other lies in $G \setminus (N \cup C)$, and $[x,y] = 1$. Hence $\diam(\nc(G)) = 2$ if $C_G(x) \subseteq N \cup C$ for all $x \in C \setminus Z(G)$, and in particular if $C = Z(G)$.
\end{lem}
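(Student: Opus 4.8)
The plan is to exploit the fact that, since $G/C$ is not cyclic, the normal closure $\langle c \rangle^G \le C$ of any $c \in C$ has non-cyclic quotient $G/\langle c \rangle^G$. This makes Lemma~\ref{lem:gnbasefacts}\ref{gnbasefacts2} collapse to a clean statement: for $c \in C \setminus Z(G)$ and any vertex $g$ with $d(c,g) > 2$, the quotient $G/\langle g \rangle^G$ must be cyclic and $[c,g] = 1$. A cyclic quotient $G/\langle g \rangle^G$ forces $g \notin N$ and $g \notin C$ (otherwise it would surject onto the non-cyclic $G/N$ or $G/C$), so in fact $g \in G \setminus (N \cup C)$. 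This already pins down the shape of every far-apart pair.

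First I would record the two auxiliary facts I need. For any $x \in C \setminus Z(G)$, I claim there is a neighbour $u \in G \setminus C$: otherwise every element not centralising $x$ would lie in $C$, giving $G = C_G(x) \cup C$, an impossible expression of $G$ as the union of two proper subgroups (both $C_G(x)$ and $C$ are proper, as $x \notin Z(G)$ and $C < G$). Since $x \in C$, any such $u$ with $[x,u] \ne 1$ satisfies $\langle x, u\rangle \le \langle C, u\rangle < G$, so indeed $x \sim u$. Second, for any two vertices lying in $G \setminus C$, Lemma~\ref{lem:gnbasefacts}\ref{gnbasefacts1} gives a common neighbour in $N \setminus Z(G)$, hence distance at most $2$; this also shows $\nc(G)$ has an edge.

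Next I would bound all distances by $3$ and characterise the extremal pairs. Given distinct non-adjacent vertices $x, y$: if both lie in $G \setminus C$, their distance is at most $2$. Otherwise, taking $x \in C \setminus Z(G)$ without loss of generality, Lemma~\ref{lem:gnbasefacts}\ref{gnbasefacts2} (in the reduced form above) gives either $d(x,y) \le 2$, or $[x,y]=1$ with $y \in G \setminus (N \cup C)$. In the latter situation I would take the neighbour $u \in G \setminus C$ of $x$ from the previous paragraph and combine $x \sim u$ with $d(u,y) \le 2$ to get $d(x,y) \le 3$. This simultaneously proves that all distances are finite (hence $\nc(G)$ is connected), that $\diam(\nc(G)) \le 3$, and that any pair at distance exactly $3$ consists of one element of $C \setminus Z(G)$ and one commuting element of $G \setminus (N \cup C)$. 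Since $\nc(G)$ has an edge, Proposition~\ref{prop:ncbasics}\ref{nodiamonenc} rules out diameter $1$, leaving diameter $2$ or $3$.

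Finally, the ``hence'' clause is immediate from the distance-$3$ characterisation: if $d(x,y) = 3$ with $x \in C \setminus Z(G)$, then $y \in C_G(x) \cap (G \setminus (N \cup C))$, so the hypothesis $C_G(x) \subseteq N \cup C$ is contradicted; thus no such pair exists and the diameter is $2$. The case $C = Z(G)$ is the special instance where $C \setminus Z(G)$ is empty, so the hypothesis holds vacuously. The main subtlety to watch is the pair of two elements of $C \setminus Z(G)$: a naive concatenation of the two ``out to $G \setminus C$ and back'' paths would only give distance $4$, so it is essential to invoke Lemma~\ref{lem:gnbasefacts}\ref{gnbasefacts2} directly (which, via the non-commuting graph of Proposition~\ref{prop:noncomdiam}, forces such a pair to distance at most $2$, since one of them lying in $G \setminus (N \cup C)$ is excluded) rather than routing through $G \setminus C$ twice.
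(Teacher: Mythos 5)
Your proposal is correct and follows essentially the same route as the paper: both arguments use Lemma~\ref{lem:gnbasefacts}\ref{gnbasefacts1} to handle pairs in $G \setminus C$, then note that $G/\langle x \rangle^G$ is non-cyclic for $x \in N \cup C$ so that Lemma~\ref{lem:gnbasefacts}\ref{gnbasefacts2} forces any pair at distance greater than $2$ to consist of a commuting pair with one element in $C$ and the other in $G \setminus (N \cup C)$, and finally route a path of length $3$ through $G \setminus C$. The only cosmetic difference is that the paper builds the path as $x$--$\,?\,$--$n$--$y$ with $n \in N$ adjacent to $y$, whereas you build it as $x$--$u$--$\,?\,$--$y$ with $u \in G \setminus C$ adjacent to $x$ (justified by a union-of-two-proper-subgroups argument rather than another appeal to \ref{gnbasefacts1}); both are valid.
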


\begin{proof}
By Lemma~\ref{lem:gnbasefacts}\ref{gnbasefacts1}, any two elements of $G \setminus C$ are joined in $\nc(G)$ by a path of length at most two. Thus it suffices to consider distances in $\nc(G)$ involving elements of $C \setminus Z(G)$.

Suppose that $x \in C \setminus Z(G)$ and $y \in G \setminus Z(G)$ satisfy $d(x,y) > 2$. As $G/N$ and $G/C$ are not cyclic, neither is $G/\langle r \rangle^G$ for any $r \in N \cup C$. In particular, $G/\langle x \rangle^G$ is not cyclic. Therefore, Lemma~\ref{lem:gnbasefacts}\ref{gnbasefacts2} implies that $y \in G \setminus (N \cup C)$ and $[x,y] = 1$.

Now, Lemma~\ref{lem:gnbasefacts}\ref{gnbasefacts1} shows that $n \sim y$ for some $n \in N \setminus Z(G)$. By the previous paragraph, $d(x,n) \le 2$, and so $d(x,y) \le d(x,n) + d(n,y) \le 3$.
\end{proof}

Using Magma, we see that the groups $S_4$ and $C_2^2 \times S_3$ satisfy the hypotheses of Lemma~\ref{lem:gcnoncyc}, and have non-commuting, non-generating graphs of diameter $3$ and $2$, respectively. In fact, in the latter case, $C = Z(G)$. On the other hand, if $G = S_3 \times S_3$, then $G$ satisfies the hypotheses of Lemma~\ref{lem:gcnoncyc} and $\diam(\nc(G)) = 2$, even though $C_G(x) \not\subseteq N \cup C$ for some $x \in C \setminus Z(G)$.

\begin{eg}
\label{eg:thompf}
Consider the infinite, $2$-generated Thompson's group $F$. The derived subgroup $F'$ of $F$ is infinite and simple, $F/F' \cong \mathbb{Z}^2$, and every proper quotient of $F$ is abelian~\cite[\S1.4]{belkphd}. Hence $F'$ is the unique minimal normal subgroup of $F$, and it follows that $C_F(F') = 1$. As $F/F'$ is not cyclic, we can apply Lemma~\ref{lem:gcnoncyc} with $G = F$ and $N = F'$ to deduce that $\diam(\nc(F)) = 2$.
\end{eg}

Next, we prove useful properties of subgroups of $G$ containing $C$, when $G/C$ is cyclic.

\begin{lem}
\label{lem:ninzc}
Let $G$, $N$ and $C$ be as in Assumption~\ref{assump:nc}, and suppose that $G/C$ is cyclic. Additionally, let $H$ be a subgroup of $G$ properly containing $C$. Then $H$ is non-abelian, $Z(H) < Z(C)$, and $H \trianglelefteq G$. In particular, $Z(G) < Z(C)$.

\end{lem}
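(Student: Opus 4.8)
Let me prove Lemma~\ref{lem:ninzc}. We are given $G$, $N$, $C=C_G(N)$ as in Assumption~\ref{assump:nc}, with $G/C$ cyclic, and $H$ a subgroup with $C<H\le G$. I need three conclusions: $H$ is non-abelian, $Z(H)<Z(C)$, and $H\trianglelefteq G$, with the final remark that $Z(G)<Z(C)$.

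**Plan and main steps.** The plan is to exploit the two structural facts available: that $G/C$ is cyclic (so $C$ is central in the quotient, forcing normality of everything between $C$ and $G$) and that $N\not\le Z(G)$ together with $C=C_G(N)$ forces non-commutativity.

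First I would handle normality. Since $G/C$ is cyclic and $C\le H\le G$, the image $H/C$ is a subgroup of a cyclic group, hence normal in $G/C$; pulling back, $H\trianglelefteq G$. This also applies to $C$ itself, confirming $C\trianglelefteq G$ (already noted after Assumption~\ref{assump:nc}).

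Second, I would prove $H$ is non-abelian. The key observation is that $N\le H$: indeed $N$ centralises itself only on $Z(N)$, but more directly $N\le C_G(N)$ would be false in general, so I must be careful here. The cleaner route is: if $H$ were abelian, then $H\le C_G(H)\le C_G(C)$; but I want to derive a contradiction with $N\not\le Z(G)$. The decisive point is that $H$ properly contains $C=C_G(N)$, so pick $h\in H\setminus C$; then $h$ does not centralise $N$, meaning there is $n\in N$ with $[h,n]\ne 1$. If $n\in H$ this immediately shows $H$ non-abelian. To guarantee $n\in H$, I would argue that $N\le C\cdot\langle h\rangle$-type reasoning fails in general, so instead I should show $N\le H$ directly: since $G/C$ is cyclic and $N$ maps into $G/C$, and... the safe argument is that $[h,n]\in$ the relevant subgroup. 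I expect the intended argument is simply that $H\not\le C=C_G(N)$ gives $h\in H$ not centralising some $n\in N$, and since $NC/C$ embeds in the cyclic $G/C$ one shows $N\le HC=H$ because $NC$ and $H$ both contain $C$ and $NC/C\le H/C$ or vice versa as subgroups of a cyclic group are totally ordered.

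**Finishing and the main obstacle.** Once $H$ is non-abelian I obtain $Z(H)<H$. To get $Z(H)<Z(C)$ rather than merely $Z(H)\le Z(C)$: any $w\in Z(H)$ centralises all of $H\supseteq C$, in particular centralises $N$ (since $N\le H$ as established), so $w\in C_G(N)=C$, and $w$ centralises $C$, giving $w\in Z(C)$; thus $Z(H)\le Z(C)$. Strictness follows because $C$ itself is non-central in $H$: some $h\in H\setminus C$ fails to centralise $C$ (as $h\notin C_G(N)$ and $N\le C$? — I must check whether $N\le C$, i.e.\ $N$ abelian). The genuinely delicate point, and the step I expect to be the main obstacle, is establishing the \emph{strict} inequality $Z(H)<Z(C)$ cleanly; I would deduce it by noting $C$ is non-abelian-relative-to-$H$ is not needed, rather that $N\le Z(C)$ would force $N\le Z(G)$ via normality, contradicting Assumption~\ref{assump:nc}, which simultaneously gives the concluding remark $Z(G)<Z(C)$ by applying the whole argument with $H=G$.
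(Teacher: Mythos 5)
Your overall architecture matches the paper's (normality from $H/C$ being a subgroup of the cyclic group $G/C$; non-abelianness from $H \not\le C_G(N)$; $Z(H) \le Z(C)$ from $N \le H$), but there is a genuine gap at exactly the point you yourself flag as uncertain: you never establish that $N \le C$, equivalently that $N$ is abelian, and every subsequent step leans on this. Your fallback argument --- that $NC/C$ and $H/C$ are comparable because ``subgroups of a cyclic group are totally ordered'' --- is false (in $C_6$ the subgroups of orders $2$ and $3$ are incomparable, and in $\mathbb{Z}$ so are $2\mathbb{Z}$ and $3\mathbb{Z}$); and even granting it, the case $H/C \le NC/C$ would not yield $N \le H$. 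The missing observation, which is the linchpin of the paper's proof, is that $NC/C \cong N/(N \cap C) = N/C_N(N) = N/Z(N)$ is a subgroup of the cyclic group $G/C$, hence cyclic; a group whose central quotient is cyclic is abelian, so $N$ is abelian, whence $N \le C_G(N) = C \le H$, and since $C$ centralises $N$ by definition, in fact $N \le Z(C)$.

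Once $N \le Z(C)$ is in hand, everything falls out as in the paper: any $h \in H \setminus C$ fails to centralise $N \le H$, so $H$ is non-abelian and $N \not\le Z(H)$; combined with $Z(H) \le Z(C)$ (your argument for which is fine once $N \le H$ is known) and $N \le Z(C)$, this forces the strict inequality $Z(H) < Z(C)$. Note that your proposed strictness argument is also off: you suggest that ``$N\le Z(C)$ would force $N\le Z(G)$ via normality, contradicting Assumption~\ref{assump:nc}'', but $N \le Z(C)$ is actually true and is used positively; the implication that does the work is that $N \le Z(H)$ would force $H \le C_G(N) = C$, contradicting $C < H$. The final assertion $Z(G) < Z(C)$ does follow, as you say, by taking $H = G$, which is a proper overgroup of $C$ precisely because $N \not\le Z(G)$.
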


\begin{proof}\leavevmode
Since $G/C$ is cyclic, so is its subgroup $NC/C \cong N/(N \cap C) = N/Z(N)$. Thus $N$ is abelian, and it follows that $N \le Z(C)$. In particular, $H$ contains $N$. Hence each of $N$ and $C$ is centralised by $Z(H)$, and so $Z(H) \le Z(C)$. However, $H$ does not centralise $N$. Thus $N \not\le Z(H)$, and it follows that $H$ is non-abelian and $Z(H) < Z(C)$. Additionally, $H/C$ is a normal subgroup of the cyclic group $G/C$, and thus $H \trianglelefteq G$.
\end{proof}

Our next proposition explores the case where $G/C$ is cyclic and $C$ is abelian. Recall that $\nd(G)$ is the subgraph of $\nc(G)$ induced by its non-isolated vertices. As in much of the previous section, we assume that $G$ is $2$-generated; otherwise, $\diam(\nc(G)) = 2$ by Proposition~\ref{prop:ncbasics}\ref{highgengp}.

\begin{lem}
\label{lem:gccycab}
Let $G$, $N$ and $C$ be as in Assumption~\ref{assump:nc}. Suppose also that $G$ is $2$-generated, $G/C$ is cyclic, and $C$ is abelian, so that $G$ is soluble. Then the following statements hold.
\begin{enumerate}[label={(\roman*)},font=\upshape]
\item Each isolated vertex of $\nc(G)$ lies in $C \setminus N$.
\item \label{gccycab3} Suppose that $C$ is maximal in $G$. Then $\nd(G)$ is connected with diameter $2$.
\item \label{gccycab4} Suppose that $C$ is not maximal in $G$, and let $M$ be a maximal subgroup of $G$ containing $C$. Then Table~\ref{table:gcabnonmax} lists upper bounds for distances between vertices of $\nc(G)$, depending on the subsets of $G \setminus Z(G)$ that contain them. In particular, $\diam(\nc(G)) \le 3$.
\end{enumerate}
\end{lem}

\begin{table}[ht]
\centering
\renewcommand{\arraystretch}{1.1}
\caption{Upper bounds for distances between vertices $x \in A$ and $y \in B$ of $\nc(G)$, with $A,B \subseteq G \setminus Z(G)$, and $C$ and $M$ as in Lemma~\ref{lem:gccycab}\ref{gccycab4}.}
\label{table:gcabnonmax}
\begin{tabular}{|c|*{4}{c|}}
\hline
\diagbox[width=7em]{{\parbox{2.2cm}{\centering \vspace{0.0cm} $A$}}}{{\parbox{0.3cm}{\centering \vspace{0.2cm} \hspace{-1.2cm} $B$}}}
& $Z(M) \setminus Z(G)$ & $C \setminus Z(M)$ & $M \setminus C$ & $G \setminus M$ \\
\hline
\multirow{2}{*}{$G \setminus M$} & \multirow{2}{*}{$1$} & $3$ & \multirow{2}{*}{$2$} & \multirow{2}{*}{$2$} \\
 &  &  $2$, if $[x,y] \ne 1$ & & \\
\hline
$M \setminus C$ & $3$ & $2$ & $2$ \\
\cline{1-4}
$C \setminus Z(M)$ & $3$ & $2$  \\
\cline{1-3}
$Z(M) \setminus Z(G)$ & $2$ \\
\cline{1-2}
\end{tabular}
\end{table}

\begin{proof}\leavevmode

\noindent (i) By Lemma~\ref{lem:gnbasefacts}\ref{gnbasefacts1}, any two elements of $G \setminus C$ have distance at most two in $\nc(G)$. Additionally, as $G/N$ is not cyclic, and as the non-commuting graph of $G$ is connected by Proposition~\ref{prop:noncomdiam}, it follows from Lemma~\ref{lem:noncycedge} that each isolated vertex lies in $C \setminus N$.

\medskip

\noindent (ii) By Lemma~\ref{lem:gnbasefacts}\ref{gnbasefacts1}, it suffices to show that $d(x,y) \le 2$ whenever $x \in C \setminus Z(G)$ and $y \in G \setminus Z(G)$ are distinct non-isolated vertices. If $G/\langle x \rangle^G$ is not cyclic, then Lemma~\ref{lem:noncycedge} shows that $G \setminus C_G(x) = G \setminus C$ is the neighbourhood of $x$ in $\nc(G)$. In particular, if $y \in G \setminus C$, then $d(x,y) = 1$. If instead the non-isolated vertex $y$ lies in the abelian group $C$, then $k \sim y$ for some $k \in G \setminus C$. Hence $x \sim k \sim y$ and $d(x,y) = 2$.

Suppose now that $G/\langle x \rangle^G$ is cyclic. By Proposition~\ref{prop:ncbasics}\ref{isolvertnc}, there exist maximal subgroups $L$ and $K$ of $G$ with $x \in L \setminus Z(L)$ and $y \in K \setminus Z(K)$. Then $x \in C \cap L$, and applying Lemma~\ref{lem:nonabmax} to $(C,L)$ gives $Z(L) \le Z(G)$. Note also that $G = CL = C_G(x)L$, and so $\langle x \rangle^G = \langle x \rangle^L \le L$. Thus $L/\langle x \rangle^G \trianglelefteq G/\langle x \rangle^G$, and hence $L \trianglelefteq G$. This implies that $C_L(x) = C \cap L \trianglelefteq G$. Additionally, $y \notin Z(L) \le Z(G)$, and $x$ is centralised by $C$, and hence not by $K$. We therefore obtain $d(x,y) \le 2$ by applying Lemma~\ref{lem:ncmaxnormcombined} to $(x,L,y,K)$.

\medskip

\noindent (iii) Since $Z(G) \le C < M$, it follows that $Z(G) \le Z(M)$. Additionally, Lemma~\ref{lem:ninzc} shows that $M$ is non-abelian and normal in $G$, with $Z(M) < Z(C) = C$. We observe from Proposition~\ref{prop:ncbasics}\ref{propernc} and Lemma~\ref{lem:gnbasefacts}\ref{gnbasefacts1} that any two vertices of $\nc(G)$ in $M \setminus Z(M) = (M \setminus C) \cup (C \setminus Z(M))$ have distance at most two, as do any two vertices in $G \setminus C = (G \setminus M) \cup (M \setminus C)$. This yields the $(1,3)$, $(1,4)$, $(2,2)$, $(2,3)$ and $(3,2)$ entries of Table~\ref{table:gcabnonmax}.

Now, suppose that $Z(G) < Z(M)$, and let $z \in Z(M) \setminus Z(G)$. Since $M$ is a non-abelian, normal subgroup of $G$ and $\langle z \rangle^G \le Z(M)$, the quotient $G/\langle z \rangle^G$ is not cyclic. Additionally, $C_G(z) = M$, and so Lemma~\ref{lem:noncycedge} gives $z \sim r$ for each $r \in G \setminus M$, hence the $(1,1)$ entry of Table~\ref{table:gcabnonmax}. Thus if $z' \in Z(M) \setminus Z(G)$ is not equal to $z$, then $z \sim r \sim z'$ and $d(z,z') = 2$, yielding the $(4,1)$ entry of the table. Moreover, if $m \in M \setminus C$, then $d(r,m) \le 2$ by the (1,3) entry of the table, and so $d(z,m) \le d(z,r)+d(r,m) \le 3$. This gives the $(2,1)$ entry of the table.

It remains to determine upper bounds for $d(c,g)$, where $c \in {C \setminus Z(M)}$ and $g \in G \setminus M$, and for $d(c,z)$ when the element $z$ exists. As $g$ does not lie in $C$, it is a non-isolated vertex by (i). It follows from Proposition~\ref{prop:ncbasics}\ref{isolvertnc} that $g \in K \setminus Z(K)$ for some maximal subgroup $K$ of $G$. Additionally, the abelian group $C$ lies in $C_G(c)$, and the cyclic group $G/C$ normalises $C_G(c)/C$. Thus $C_G(c) \trianglelefteq G$. Since $M \trianglelefteq G$, it follows that $C_M(c) = C_G(c) \cap M \trianglelefteq G$. Therefore, applying Lemma~\ref{lem:ncmaxnormcombined} to $(c,M,g,K)$ gives $d(c,g) \le 3$. Moreover, since $g \notin Z(M)$, that lemma shows that if $d(c,g) = 3$, then $c \in Z(K)$, and in particular, $[c,g] = 1$. Thus we obtain the $(1,2)$ entry of Table~\ref{table:gcabnonmax}.

Finally, since $C_G(c) < G$ and $M < G$, there exists $h \in {G \setminus (M \cup C_G(c))}$. The $(1,1)$ and $(1,2)$ entries of Table~\ref{table:gcabnonmax} yield $h \sim z$ and $d(c,h) \le 2$. Hence $d(c,z) \le d(c,h)+d(h,z) \le 3$. This gives the $(3,1)$ entry of the table, completing the proof.
\end{proof}

Using Magma, we observe that if $G$ is equal to $C_3 \sd S_3$ or the dihedral group $D_{12}$ of order $12$, then Lemma~\ref{lem:gccycab}\ref{gccycab3} applies, and $\nc(G)$ is connected only in the former case (in both cases, $\nd(G)$ is connected with diameter $2$). If instead $G$ is equal to $C_2 \times \mathrm{AGL}(1,5)$ or $C_3 \sd \mathrm{AGL}(1,5)$, then Lemma~\ref{lem:gccycab}\ref{gccycab4} applies, and $\nc(G)$ has diameter $2$ or $3$, respectively.

The following result is a more detailed version of Lemma~\ref{lem:maxnormconnected}, with a weaker hypothesis.

\begin{lem}
\label{lem:gccycnonab}
Let $G$ and $C$ be as in Assumption~\ref{assump:nc}. Suppose also that $G$ is $2$-generated, $G/C$ is cyclic, and $C$ is non-abelian. Then the following statements hold.
\begin{enumerate}[label={(\roman*)},font=\upshape]

\item \label{gccycnonab1} $\nc(G)$ is not connected if and only if $C$ is maximal in $G$ and $K \cap C = Z(C)$ for every maximal subgroup $K$ of $G$ distinct from $C$. In this case, $\nc(G)$ is the union of two connected components of diameter $2$, and one component consists of the elements of ${C \setminus Z(C)}$. In particular, if $G$ is finite, then $\nc(G)$ is not connected if and only if $G$ satisfies Assumption~\ref{assump:22g}.

\item \label{gccycnonab2} Suppose that $C$ is not maximal in $G$, and let $M$ be a maximal subgroup of $G$ containing $C$. Then Table~\ref{table:gcnonabnonmax} lists upper bounds for distances between vertices of $\nc(G)$, depending on the subsets of $G \setminus Z(G)$ that contain them. In particular, $\diam(\nc(G)) \le 4$, and $\diam(\nc(G)) \le 3$ if $G$ is finite or if $Z(M) = Z(G)$.

\item \label{gccycnonab3} Suppose that $C$ is maximal in $G$, and that $\nc(G)$ is connected. Then Table~\ref{table:gcnonabmax} lists upper bounds for distances between vertices of $\nc(G)$, depending on the subsets of $G \setminus Z(G)$ that contain them. In particular, $\diam(\nc(G)) \le 4$, and $\diam(\nc(G)) \le 3$ if $G$ is finite.
\end{enumerate}
\end{lem}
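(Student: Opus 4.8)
My plan is to exploit that, under these hypotheses, $C = C_G(N)$ is itself a normal, non-abelian subgroup of $G$, and that every maximal subgroup $M$ of $G$ containing $C$ is normal and non-abelian. First I would record the structural consequences of Lemma~\ref{lem:ninzc}: taking $H = G$ there gives $Z(G) < Z(C)$, while its proof shows $N$ is abelian and $N \le Z(C)$. For any maximal $M \supseteq C$, Lemma~\ref{lem:ninzc} yields $M \trianglelefteq G$ with $M$ non-abelian; and since $Z(M)$ centralises $M \supseteq C \supseteq N$, I get the chain $Z(G) \le Z(M) \le Z(C) \le C \le M \le G$, so the four sets $Z(M) \setminus Z(G)$, $C \setminus Z(M)$, $M \setminus C$ and $G \setminus M$ partition $G \setminus Z(G)$. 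I would also record the recurring fact that $G/Z(M)$ is non-cyclic, since otherwise its subgroup $M/Z(M)$ would be cyclic, forcing $M$ abelian.

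The crucial dividing line is whether $C$ is maximal. If $C$ is maximal, then $C$ is a normal, non-abelian maximal subgroup with $Z(G) < Z(C)$, so Lemma~\ref{lem:maxnormconnected} (with $M = C$) immediately supplies the connectedness criterion of~\ref{gccycnonab1} — disconnected exactly when $K \cap C = Z(C)$ for all maximal $K \ne C$ — together with the finite-case equivalence with Assumption~\ref{assump:22g}. In the disconnected case, Propositions~\ref{prop:xzdist}\ref{xzdist1} and~\ref{prop:xydist}\ref{xydist1} show that $C \setminus Z(C)$ is cut off from $(G \setminus C) \cup (Z(C) \setminus Z(G))$; that $C \setminus Z(C)$ induces a component of diameter $2$ is Proposition~\ref{prop:ncbasics}\ref{propernc} applied to the proper non-abelian subgroup $C$, and that the complementary set induces a second diameter-$2$ component follows from Lemma~\ref{lem:gnbasefacts}\ref{gnbasefacts1} together with Lemma~\ref{lem:zneighbs} (the neighbours of any $z \in Z(C) \setminus Z(G)$ are exactly $G \setminus C$). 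To complete the ``only if'' direction of~\ref{gccycnonab1}, I would show that $C$ not maximal forces $\nc(G)$ connected, which drops out of the diameter bounds in~\ref{gccycnonab2}.

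For~\ref{gccycnonab2} and~\ref{gccycnonab3} I would fill in Tables~\ref{table:gcnonabnonmax} and~\ref{table:gcnonabmax} entry by entry. Pairs inside $G \setminus C$ have distance at most $2$ by Lemma~\ref{lem:gnbasefacts}\ref{gnbasefacts1}; pairs inside $M \setminus Z(M)$ have distance at most $2$ by Proposition~\ref{prop:ncbasics}\ref{propernc}; and when $Z(G) < Z(M)$, each $z \in Z(M) \setminus Z(G)$ satisfies $C_G(z) = M$ with $G/\langle z \rangle^G$ non-cyclic, so Lemma~\ref{lem:noncycedge} gives $z \sim r$ for every $r \in G \setminus M$. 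The entry from $C \setminus Z(M)$ to $G \setminus M$ I would bound by $3$ directly: for $c \in C \setminus Z(M)$ the subgroup $C_M(c)$ is proper in $M$, and as $C \ne M$ the group $M$ is not the union of $C_M(c)$ and $C$, so there is $m \in (M \setminus C) \setminus C_M(c)$ with $c \sim m$, after which $d(m,y) \le 2$. The remaining and largest entries — the distances from $M \setminus Z(M)$ (that is, from $M \setminus C$ or $C \setminus Z(M)$) to $Z(M) \setminus Z(G)$ and to $G \setminus M$ — are precisely what Propositions~\ref{prop:xzdist} and~\ref{prop:xydist} deliver, giving the bound $4$ in general and $3$ in the finite case. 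When $Z(M) = Z(G)$ the column $Z(M) \setminus Z(G)$ is empty and the only surviving nontrivial entry is the bound of $3$ just described, so $\diam(\nc(G)) \le 3$. Part~\ref{gccycnonab3} is the same computation with $M = C$, where $Z(C) > Z(G)$ holds automatically.

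I expect the main obstacle to be twofold. First, one must verify that Propositions~\ref{prop:xzdist} and~\ref{prop:xydist} genuinely apply — that the relevant $M$ is a normal, non-abelian maximal subgroup with $Z(G) < Z(M)$ — and must faithfully transcribe their case analyses and finite-versus-infinite dichotomies into the tables. Second, to separate the $C$-maximal and $C$-non-maximal cases and to guarantee connectedness in the latter, I would argue through Proposition~\ref{prop:gkzmprim}: if $C < M$ but every maximal $K \ne M$ met $M$ in $Z(M)$, then $G/Z(M)$ would be primitive with minimal normal subgroup $M/Z(M)$, and the nontrivial normal subgroup $C/Z(M)$ contained in it would have to equal $M/Z(M)$, forcing $C = M$, a contradiction. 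Reconciling this primitivity argument with the degenerate branch $Z(M) = Z(G)$, where Propositions~\ref{prop:xzdist}--\ref{prop:xydist} are unavailable and the bounds must instead come from the direct intermediate-vertex argument, is the most delicate bookkeeping in the proof.
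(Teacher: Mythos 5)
Your overall strategy is the paper's own: the same split on whether $C$ is maximal, the same reduction of the connectivity criterion to Lemma~\ref{lem:maxnormconnected}, the same use of Proposition~\ref{prop:ncbasics}\ref{propernc} and Lemma~\ref{lem:gnbasefacts}\ref{gnbasefacts1} for the ``easy'' table entries, and the same appeal to Propositions~\ref{prop:xzdist} and~\ref{prop:xydist} for the entries that can reach $4$. Your direct three-step bound from $C \setminus Z(M)$ to $G \setminus M$ is essentially the paper's argument for the $(1,2)$ and $(1,3)$ entries of Table~\ref{table:gcnonabnonmax}, and your primitivity argument via Proposition~\ref{prop:gkzmprim} for forcing connectedness when $C$ is not maximal is a valid (and slightly slicker) alternative to the paper's route, which instead reads connectedness off the already-established finite distance bounds.

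There is, however, one genuine gap: the final clause of \ref{gccycnonab1}, namely that for finite $G$ disconnectedness is \emph{equivalent} to Assumption~\ref{assump:22g}. You obtain this equivalence from Lemma~\ref{lem:maxnormconnected} with $M = C$, but that lemma only applies when $C$ is maximal. In the case where $C$ is not maximal you prove that $\nc(G)$ is connected, and the equivalence then still requires showing that such a finite $G$ \emph{cannot} satisfy Assumption~\ref{assump:22g}; nothing in your outline delivers this, and it is not automatic (one cannot invoke Proposition~\ref{prop:22grouplink}, whose proof rests on this very lemma). The paper closes this by assuming Assumption~\ref{assump:22g} holds, identifying the maximal subgroup $M \supseteq C$ with the subgroup $PR$ of Theorem~\ref{thm:solmaxclass}\ref{solmaxclass2}, deducing $Z(G) < Z(M)$ from parts (a) and (c) of that theorem, and then applying Lemma~\ref{lem:maxnormconnected} to $M$ (not to $C$) to reach a contradiction with connectedness. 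A secondary, more cosmetic issue is that lumping $Z(C)\setminus Z(M)$ and $C \setminus Z(C)$ into a single block $C \setminus Z(M)$ and feeding the whole of $M \setminus Z(M)$ into Proposition~\ref{prop:xzdist} yields only the bound $4$ where Table~\ref{table:gcnonabnonmax} asserts $2$ for the $(4,1)$ entry and $3$ for the $(2,1)$ entry; both follow quickly from facts you already state (Lemma~\ref{lem:gnbasefacts}\ref{gnbasefacts2} for the former, and $z \sim r$ for all $r \in G\setminus M$ composed with $d(r,m)\le 2$ for the latter), but as written your plan does not establish the table as stated.
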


\begin{table}[ht]
\centering
\renewcommand{\arraystretch}{1.1}
\caption{Upper bounds for distances between vertices $x \in A$ and $y \in B$ of $\nc(G)$, with $A,B \subseteq G \setminus Z(G)$, and $C$ and $M$ as in Lemma~\ref{lem:gccycnonab}\ref{gccycnonab2}. Additionally, $\mathcal{M}$ denotes the family of groups for which any two distinct maximal subgroups intersect in $\Phi(G)$.}
\label{table:gcnonabnonmax}
\begin{tabular}{|c||*{5}{c|}}
\hline
\diagbox[width=7em]{{\parbox{2.2cm}{\centering \vspace{0.0cm} $A$}}}{{\parbox{0.3cm}{\centering \vspace{0.2cm} \hspace{-1.2cm} $B$}}}
& $Z(M) \setminus Z(G)$ & $Z(C) \setminus Z(M)$ & $C \setminus Z(C)$ & $M \setminus C$ & $G \setminus M$ \\
\hline
\hline
$G \setminus M$ & $1$ & $3$ & $3$ & $2$ & $2$  \\
\hline
$M \setminus C$ & $3$ & $2$ & $2$  & $2$  \\
\cline{1-5}
\multirow{3}{*}{$C \setminus Z(C)$} & $4$ & \multirow{3}{*}{$2$}  & \multirow{3}{*}{$2$} \\
& $3$, if $|G| < \infty$ & & \\
& $3$, if $G \notin \mathcal{M}$ & & \\
\cline{1-4}
$Z(C) \setminus Z(M)$ & $2$ & $2$ \\
\cline{1-3}
$Z(M) \setminus Z(G)$ & $2$ \\
\cline{1-2}
\end{tabular}
\end{table}

\begin{table}[ht]
\centering
\renewcommand{\arraystretch}{1.1}
\caption{Upper bounds for distances between vertices $x \in A$ and $y \in B$ of $\nc(G)$, with $A,B \subseteq G \setminus Z(G)$, and $C$ as in Lemma~\ref{lem:gccycnonab}\ref{gccycnonab3}. Additionally, $\mathcal{M}$ denotes the family of groups for which any two distinct maximal subgroups intersect in $\Phi(G)$.}
\label{table:gcnonabmax}
\begin{tabular}{|c||*{3}{c|}}
\hline
\diagbox[width=7em]{{\parbox{2.2cm}{\centering \vspace{0.0cm} $A$}}}{{\parbox{0.3cm}{\centering \vspace{0.2cm} \hspace{-1.2cm} $B$}}}
& $Z(C) \setminus Z(G)$ & $C \setminus Z(C)$ & $G \setminus C$ \\
\hline
\hline
\multirow{3}{*}{$G \setminus C$} & \multirow{3}{*}{$1$} & $4$ & \multirow{3}{*}{$2$} \\
 & & $3$, if $|G/Z(C)| < \infty$ & \\
 & & $3$, if $G \in \mathcal{M}$ & \\
\hline
\multirow{3}{*}{$C \setminus Z(C)$} & $4$ & \multirow{3}{*}{$2$} \\
& $3$, if $|G| < \infty$ & \\
& $3$, if $G \notin \mathcal{M}$ & \\
\cline{1-3}
$Z(C) \setminus Z(G)$ & $2$ \\
\cline{1-2}
\end{tabular}
\end{table}

\begin{proof}
Proposition~\ref{prop:ncbasics}\ref{propernc} and Lemma~\ref{lem:gnbasefacts}\ref{gnbasefacts1} show that any two vertices in $C \setminus Z(C)$ are joined by a path of length at most two, as are any two vertices in $G \setminus C$, and any two vertices in $M \setminus Z(M)$ when $M$ is as in (ii). We therefore obtain the $(1,4)$, $(1,5)$, $(2,2)$, $(2,3)$, $(2,4)$, $(3,2)$, $(3,3)$ and $(4,2)$ entries of Table~\ref{table:gcnonabnonmax}, and the $(1,3)$ and $(2,2)$ entries of Table~\ref{table:gcnonabmax}. Additionally, Lemma~\ref{lem:ninzc} implies that $Z(G) < Z(C)$. Let $z \in Z(C) \setminus Z(G)$. Since $C$ is non-abelian and normal in $G$ and $\langle z \rangle^G \le Z(C)$, the quotient $G/\langle z \rangle^G$ is not cyclic. We split the remainder of the proof into two cases, depending on whether $C$ is maximal in $G$.

\medskip

\noindent \textbf{Case (a)}: $C$ is maximal in $G$. Here, $C = C_G(z)$. Since $G/\langle z \rangle^G$ is not cyclic, it follows from Lemma~\ref{lem:noncycedge} that $z \sim k$ for all $k \in G \setminus C$. Thus we obtain the $(1,1)$ entry of Table~\ref{table:gcnonabmax}. If $z'$ is another element of ${Z(C) \setminus Z(G)}$, then $z \sim k \sim z'$, yielding the $(3,1)$ entry of Table~\ref{table:gcnonabmax}. Furthermore, 
since ${Z(G) < Z(C)}$, Lemma~\ref{lem:maxnormconnected} shows that if $G$ is finite, then $\nc(G)$ is not connected if and only $G$ satisfies Assumption~\ref{assump:22g}, and in general, $\nc(G)$ is not connected if and only if $K \cap C = Z(C)$ for every maximal subgroup $K \ne C$ of $G$.

Suppose first that $\nc(G)$ is connected, and let $x \in C \setminus Z(C)$ and $y \in G \setminus C$. Then Propositions~\ref{prop:xzdist} shows that $d(x,z) \le 4$, and that if $d(x,z) = 4$, then $|G| = \infty$ and $K \cap C = \Phi(G)$ for each maximal subgroup $K$ of $G$ distinct from $C$. It follows from Lemma~\ref{lem:twoconjclass} that if $d(x,z) = 4$, then any two distinct maximal subgroups of $G$ intersect in $\Phi(G)$, and we obtain the $(2,1)$ entry of Table~\ref{table:gcnonabmax}. Additionally, Proposition~\ref{prop:xydist} shows that $d(x,y) \le 4$, and that if $d(x,y) = 4$, then $|G/Z(C)| = \infty$ and $G$ contains maximal subgroups $K$ and $L \ne C$ such that $K \cap C = Z(C) \ne L \cap C$. This yields the $(1,2)$ entry of Table~\ref{table:gcnonabmax}. We have therefore proved (iii).

Next, suppose that $\nc(G)$ is not connected. We have shown that if $g,h \in C \setminus Z(C)$ or $g,h \in (G \setminus C) \cup (Z(C) \setminus Z(G))$, then $d(g,h) \le 2$. Hence the components of $\nc(G)$ and their diameters are as in (i). To complete the proof of (i), it remains to show that if $C$ is not maximal in $G$, then $\nc(G)$ is connected, and if $G$ is also finite, then it does not satisfy Assumption~\ref{assump:22g}.

\medskip

\noindent \textbf{Case (b)}: $C$ is not maximal in $G$. Let $M$ be a maximal subgroup of $G$ containing $C$. Then $M$ is non-abelian, and Lemma~\ref{lem:ninzc} gives $M \trianglelefteq G$ and $Z(M) < Z(C)$. As $Z(G) < C$, it also follows that $Z(G) \le Z(M)$. 
Let $z' \in Z(C) \setminus (Z(G) \cup \{z\})$. Since $G/\langle z \rangle^G$ and $G/\langle z' \rangle^G$ are not cyclic, we can apply Lemma~\ref{lem:gnbasefacts}\ref{gnbasefacts2} (and the fact that $[z,z'] = 1$) to obtain $d(z,z') = 2$. This gives the $(4,1)$ and $(5,1)$ entries of Table~\ref{table:gcnonabnonmax}. Note also that as $z \in Z(C) \setminus Z(G)$, there exists $h \in G \setminus C$ with $[z,h] \ne 1$, and Lemma~\ref{lem:noncycedge} gives $z \sim h$. Letting $g \in G \setminus C$, the known entries of Table~\ref{table:gcnonabnonmax} show that $d(h,g) \le 2$, and so $d(z,g) \le 3$. This yields the $(1,2)$ and $(2,1)$ entries of the table.

Now, let $x \in C \setminus Z(C)$. Then $x \notin Z(M)$, and so $C_M(x) < M$. Thus there exists $k \in {M \setminus (C \cup C_M(x))}$, and we observe that $x \sim k$. The $(1,4)$ entry of Table~\ref{table:gcnonabnonmax} gives $d(k,g) \le 2$ for each $g \in G \setminus M$, and hence $d(x,g) \le 3$, yielding the $(1,3)$ entry of the table.

Next, we will consider the remaining entries in the first column of Table~\ref{table:gcnonabnonmax}, which apply only when $Z(G) < Z(M)$. Assume that $z \in Z(M) \setminus Z(G)$. Then the $(2,1)$ entry of Table~\ref{table:gcnonabnonmax} shows that $d(z,m) < \infty$ for each $m \in M \setminus C$. The $(3,1)$ entry of the table therefore follows from Proposition~\ref{prop:xzdist} and Lemma~\ref{lem:twoconjclass}. In addition, $M = C_G(z)$, and so Lemma~\ref{lem:noncycedge} gives the $(1,1)$ entry of the table. This completes the proof of (ii).

We have shown that $\nc(G)$ is connected, which partially proves (i). To complete the proof, suppose for a contradiction that $G$ is finite and satisfies Assumption~\ref{assump:22g}. Then the unique non-cyclic Sylow subgroup $G$ of $P$ satisfies $Z(P) \not\le Z(G)$; the unique maximal subgroup of each nontrivial cyclic Sylow subgroup of $G$ is normal in $G$; and $M$ is precisely the maximal subgroup $M$ specified in Theorem~\ref{thm:solmaxclass}\ref{solmaxclass2} (by part (a) of that theorem). Part (c) of that theorem therefore implies that $Z(M)$ contains $Z(P) \not\le Z(G)$, and so $Z(G) < Z(M)$. Hence Lemma~\ref{lem:maxnormconnected} applies, and shows that $G$ does not in fact satisfy Assumption~\ref{assump:22g}. Thus (i) holds.
\end{proof}

Notice from Propositions~\ref{prop:xzdist} and~\ref{prop:xydist} that the conditions $G \notin \mathcal{M}$ and $G \in \mathcal{M}$ in the $(2,1)$ and $(1,2)$ entries of Table~\ref{table:gcnonabmax}, respectively, are stronger than those necessary to ensure that the specified distances cannot be equal to $4$ (and similarly for the $(3,1)$ entry of Table~\ref{table:gcnonabnonmax}). However, the chosen conditions highlight the fact that there is no group for which these two entries of Table~\ref{table:gcnonabmax} are simultaneously equal to $4$, as discussed in Remark~\ref{rem:dist4}.

We compute via Magma that $S_3 \times \mathrm{AGL}(1,5)$ and the group numbered $(192,30)$ in the Small Groups Library \cite{smallgroups} satisfy the hypotheses of Lemma~\ref{lem:gccycnonab}\ref{gccycnonab2}, and have non-commuting, non-generating graphs of diameter $2$ and $3$, respectively. In addition, $S_3 \times S_3$ and $\mathrm{SmallGroup}(48,15)$ satisfy the hypotheses of Lemma~\ref{lem:gccycnonab}\ref{gccycnonab3}, and their graphs have diameter $2$ and $3$, respectively.

Before presenting examples of groups that satisfy Lemma~\ref{lem:gccycnonab}\ref{gccycnonab1}, we further clarify how Assumption~\ref{assump:22g} relates to this lemma (and to Assumption~\ref{assump:nc}).

\begin{prop}
\label{prop:22grouplink}
Suppose that $G$ satisfies Assumption~\ref{assump:22g}. Then $G$ is $2$-generated and has a normal subgroup $N \not\le Z(G)$, with $G/N$ non-cyclic, $G/C_G(N)$ cyclic, and $C_G(N)$ non-abelian. Thus $G$ satisfies the hypotheses of Lemma~\ref{lem:gccycnonab}, and so $\nc(G)$ is the union of two connected components of diameter $2$.
\end{prop}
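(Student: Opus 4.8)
The plan is to exhibit the required normal subgroup explicitly by taking $N := \Phi(P) = Z(P)$, and to identify its centraliser with the normal non-abelian maximal subgroup furnished by Theorem~\ref{thm:solmaxclass}. First I would record two consequences of Assumption~\ref{assump:22g}. Since $\Phi(P) = Z(P)$ is a proper subgroup of $P$, the Sylow subgroup $P$ is non-abelian, so $V := P/\Phi(P) \cong C_p^d$ with $d \ge 2$. Letting $R$ denote the unique maximal subgroup of $Q$, which is normal in $G$ by hypothesis, Theorem~\ref{thm:solmaxclass}\ref{solmaxclass2} shows that $M := PR = P \times R$ is a normal maximal subgroup of $G$; it is non-abelian because $P$ is, and $G/M \cong Q/R$ is cyclic of prime order.

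Next I would verify Assumption~\ref{assump:nc} for $N$ and locate $C := C_G(N)$. The subgroup $N = \Phi(P)$ is characteristic in $P \trianglelefteq G$ and so is normal in $G$, while $N \not\le Z(G)$ is part of Assumption~\ref{assump:22g}. For the centraliser, $P$ centralises its own centre $Z(P) = N$, and $R$ centralises $P$ (hence $N$) since $M = P \times R$; thus $M \le C_G(N)$. Because $N \not\le Z(G)$ forces $C_G(N) \ne G$, maximality of $M$ yields $C = C_G(N) = M$, which is non-abelian, and $G/C = G/M$ is cyclic. Finally, $V = P/\Phi(P)$ is a non-cyclic subgroup of $G/\Phi(P) = G/N$, and since every subgroup of a cyclic group is cyclic, $G/N$ is non-cyclic; this completes the verification of Assumption~\ref{assump:nc}.

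It remains to show that $G$ is $2$-generated, which I expect to be the one step needing genuine care, after which Lemma~\ref{lem:gccycnonab} applies directly. Passing to $G/\Phi(G)$, Theorem~\ref{thm:solmaxclass}\ref{solmaxclass2}\ref{solmaxclass2c} gives $\Phi(G) = \Phi(P) \times R$, so $G/\Phi(G) \cong V \rtimes \langle c \rangle$ with $\langle c \rangle \cong Q/R \cong C_q$; as $R$ centralises $P$, the irreducible action of $Q$ on $V$ factors through $C_q$, which therefore also acts irreducibly. Choosing any nontrivial $v \in V$, irreducibility makes the $C_q$-submodule generated by $v$ equal to $V$, so $\langle v, c \rangle = G/\Phi(G)$; hence $G/\Phi(G)$, and therefore $G$, is $2$-generated. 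Now $G$ satisfies all hypotheses of Lemma~\ref{lem:gccycnonab}, and since $G$ is finite and satisfies Assumption~\ref{assump:22g}, part~\ref{gccycnonab1} of that lemma shows that $\nc(G)$ is disconnected, and hence is the union of two connected components of diameter $2$.
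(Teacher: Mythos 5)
Your proof is correct and follows essentially the same route as the paper: identify the normal non-abelian maximal subgroup $M = P \times R$ via Theorem~\ref{thm:solmaxclass}, produce a normal subgroup $N$ with $C_G(N) = M$ so that Assumption~\ref{assump:nc} holds with $G/C_G(N)$ cyclic and $C_G(N)$ non-abelian, and then invoke Lemma~\ref{lem:gccycnonab}\ref{gccycnonab1}. The only differences are cosmetic: you take $N = \Phi(P) = Z(P)$ where the paper takes $N = Z(M) = Z(P) \times R$ (both have centraliser $M$), and you spell out the $2$-generation argument via the irreducible action on $G/\Phi(G)$ where the paper reads it off from Theorem~\ref{thm:solmaxclass} directly.
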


\begin{proof}
Let $P$ and $Q$ be as in Assumption~\ref{assump:22g}, and let $R$ be the unique maximal subgroup of $Q$. Theorem~\ref{thm:solmaxclass} shows that $G$ contains a normal maximal subgroup $M := P \times R$, and that $G = \langle x,y \rangle$ for each $x \in P \setminus \Phi(P)$ and generator $y$ for $Q$. Additionally, $N := Z(M) = Z(P) \times R$ is not a subgroup of $Z(G)$ by assumption. Since $Z(P) = \Phi(P) < P$, both $P$ and $M$ are non-abelian. Thus $G/N = G/Z(M)$ is not cyclic, and so $G$ and $N$ are as in Assumption~\ref{assump:nc}. Moreover, $C_G(N) = M$, and hence $G/C_G(N)$ is cyclic. Therefore, $G$ satisfies the hypotheses of Lemma~\ref{lem:gccycnonab}, and the final part of the result follows from Lemma~\ref{lem:gccycnonab}\ref{gccycnonab1}.
\end{proof}

We will call a group $G$ a \emph{$[2,2]$-group} if $\nc(G)$ is the union of two connected components of diameter $2$. The following example describes an infinite family of such groups.

\begin{eg}
\label{eg:suz22}
Let $G$ be the finite simple Suzuki group $\mathrm{Sz}(q)$, where $q:=2^i$ with $i$ an odd integer at least $3$. Additionally, let $P$ be a Sylow $2$-subgroup of $G$, and $N:=N_G(P)$. Then $|P| = q^2$, and $N$ is a maximal subgroup of $G$ isomorphic to the Frobenius group $P \sd C_{q-1}$ \cite[\S4, p.~133, \& Theorem 9]{suzuki}. Given a primitive prime divisor $r$ of $2^i-1$, let $N_r:=P \sd C_r \le N$. Then $N_r$ is also Frobenius, hence $Z(N_r) = 1$. Moreover, each cyclic subgroup of $N_r$ of order $r$ acts irreducibly on $P/\Phi(P)$ \cite[Theorem 3.5]{hering}. Thus $N_r$ satisfies all conditions of Theorem~\ref{thm:solmaxclass}\ref{solmaxclass1}.

We claim that $N_r$ is a $[2,2]$-group. By Proposition~\ref{prop:22grouplink}, it suffices to show that $G$ satisfies Assumption~\ref{assump:22g}. As the unique maximal subgroup of $C_r$ is the trivial subgroup, it remains only to prove that $\Phi(P) = Z(P) \not\le Z(N_r) = 1$. Let $\theta$ be the automorphism $\alpha \mapsto \alpha^{\sqrt{2q}}$ of $\mathbb{F}_{q}$. Then \cite[pp.~111-112 \& Theorem 7]{suzuki} shows that $P$ is isomorphic to the group $\{(\alpha,\beta) \mid \alpha,\beta \in \mathbb{F}_{q}\}$, where $(\alpha,\beta)(\gamma,\delta):=(\alpha+\gamma,\alpha \gamma^\theta+\beta+\delta)$ for all $(\alpha,\beta),(\gamma,\delta) \in P$. 

Now, $Z(P) = \{(0,\beta) \mid \beta \in \mathbb{F}_q\} \not\le 1 = Z(N_r)$. Using the fact that ${\alpha \mapsto \alpha\alpha^{\theta}}$ is an automorphism of $\mathbb{F}_{q}^\times$, we calculate that $Z(P)$ contains $P'$ and is equal to the subgroup $K$ generated by all squares in $P$. Since $\Phi(P) = KP'$, it follows that $\Phi(P) = Z(P)$. Hence $N_r$ is a $[2,2]$-group.
\end{eg}

We observe using Magma that $\mathrm{SmallGroup}(96,3)$ is the unique smallest finite $[2,2]$-group. Additionally, there exist $[2,2]$-groups with odd order, e.g.,  $\mathrm{SmallGroup}(9477,4035)$, and with no Sylow subgroup of prime order, e.g., $\mathrm{SmallGroup}(288,3)$.

The following theorem summarises this section's main results.

\begin{thm}
\label{thm:ncsummary}
Suppose that $G$ contains a normal subgroup $N$, such that $G/N$ is not cyclic and $N \not\le Z(G)$, and let $C:=C_G(N)$. Then one of the following holds.
\begin{enumerate}[label={(\roman*)},font=\upshape]
\item $\nc(G)$ has an isolated vertex, and $\nd(G)$ is connected with diameter $2$. Additionally, $G$ is soluble, $C$ is abelian and maximal in $G$, and each isolated vertex lies in $C \setminus N$.
\item $\nc(G)$ is connected with diameter $2$, $3$ or $4$. If $\diam(\nc(G)) = 4$, then $G$ is infinite, $G/C$ is cyclic, and $C$ is non-abelian.
\item $\nc(G)$ is the union of two connected components of diameter $2$, with one component consisting of the elements of $C \setminus Z(C)$. Moreover, $C$ is non-abelian and maximal in $G$.
\end{enumerate}
Furthermore, if $G$ is finite, then (iii) holds if and only if $G$ satisfies Assumption~\ref{assump:22g}.
\end{thm}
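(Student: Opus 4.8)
The plan is to prove this summary theorem by reducing it to the detailed results established earlier in the section, organised as a case analysis driven by the structure of $G/C$ and of $C$. First I would record two preliminaries. Since $N \not\le Z(G)$, the group $G$ is non-abelian; moreover, choosing $n \in N \setminus Z(G)$ and $h \in G$ with $[n,h] \ne 1$, the non-cyclicity of $G/N$ forces $\langle N, h \rangle < G$, so $\{n,h\}$ is an edge of $\nc(G)$ by Lemma~\ref{lem:noncycedge}; thus $\nc(G)$ is a nontrivial graph and $\nd(G)$ is nonempty. If $G$ is not $2$-generated, then Proposition~\ref{prop:ncbasics}\ref{highgengp} immediately places us in case (ii), so I would assume henceforth that $G$ is $2$-generated. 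Note that $G$ and $N$ satisfy Assumption~\ref{assump:nc}, so all of the section's lemmas apply.

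Next I would split into three cases according to $G/C$ and $C$. If $G/C$ is not cyclic, Lemma~\ref{lem:gcnoncyc} gives that $\nc(G)$ is connected with diameter $2$ or $3$, i.e.\ case (ii). If $G/C$ is cyclic and $C$ is abelian, then Proposition~\ref{prop:gccycab} applies; in particular $G$ is soluble and, by its part (i), every isolated vertex lies in $C \setminus N$. When $C$ is maximal, Proposition~\ref{prop:gccycab}\ref{gccycab3} shows $\nd(G)$ is connected with diameter $2$: if $\nc(G)$ has an isolated vertex we land in case (i) (with $C$ abelian and maximal, as required), and otherwise $\nc(G) = \nd(G)$ is connected of diameter $2$, giving case (ii). When $C$ is not maximal, Proposition~\ref{prop:gccycab}\ref{gccycab4} bounds every pairwise distance by $3$, so $\nc(G)$ is connected of diameter at most $3$, again case (ii).

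The remaining case is $G/C$ cyclic with $C$ non-abelian, handled by Lemma~\ref{lem:gccycnonab}. If $C$ is maximal and $\nc(G)$ is disconnected, then Lemma~\ref{lem:gccycnonab}\ref{gccycnonab1} gives exactly case (iii), including the description of one component as $C \setminus Z(C)$ and the non-abelianness and maximality of $C$. In every other sub-case (Lemma~\ref{lem:gccycnonab}\ref{gccycnonab2} and \ref{gccycnonab3}), the relevant table bounds every pairwise distance by $4$, so $\nc(G)$ is connected; and since those tables bound the diameter by $3$ whenever $G$ is finite, any diameter-$4$ occurrence forces $G$ infinite, with $G/C$ cyclic and $C$ non-abelian, which is precisely the side condition recorded in case (ii). This exhausts all possibilities, so (i), (ii) or (iii) always holds.

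For the final equivalence with $G$ finite, I would observe that case (iii) is characterised graph-theoretically as ``$\nc(G)$ is the union of two nontrivial components''. Tracing back through the case analysis, this outcome arises only in the sub-case $G/C$ cyclic, $C$ non-abelian and maximal, $\nc(G)$ disconnected; all other cases yield either a connected graph or isolated vertices attached to a single nontrivial component. Hence, for finite $G$, case (iii) holds precisely when this sub-case occurs, and Lemma~\ref{lem:gccycnonab}\ref{gccycnonab1} states that (for such $G$) $\nc(G)$ is disconnected if and only if $G$ satisfies Assumption~\ref{assump:22g}; the forward implication may alternatively be read off Proposition~\ref{prop:22grouplink}. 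The main obstacle here is not any single computation but verifying the exhaustiveness and mutual consistency of the trichotomy --- in particular, confirming that the graph-theoretic split (connected / one nontrivial component with isolated vertices / two nontrivial components) aligns correctly with the analytic split coming from $G/C$ and $C$, so that each conclusion in (i)--(iii) is attainable and each stated side condition (solubility, maximality and non-abelianness of $C$, and the infinitude forced by diameter $4$) is justified by the appropriate earlier result, independently of the particular choice of $N$.
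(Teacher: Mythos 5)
Your proposal is correct and follows essentially the same route as the paper: reduce to the $2$-generated case via Proposition~\ref{prop:ncbasics}\ref{highgengp}, then split according to whether $G/C$ is cyclic and whether $C$ is abelian, invoking Lemma~\ref{lem:gcnoncyc}, Proposition~\ref{prop:gccycab}, Lemma~\ref{lem:gccycnonab} and Proposition~\ref{prop:22grouplink} exactly as the paper does. Your version simply spells out more explicitly the bookkeeping (which sub-cases produce isolated vertices, and why diameter $4$ forces $G$ infinite with $G/C$ cyclic and $C$ non-abelian) that the paper leaves implicit.
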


\begin{proof}
We may assume that $G$ is $2$-generated; otherwise, $\diam(\nc(G)) = 2$ by Proposition~\ref{prop:ncbasics}\ref{highgengp}. If $G/C$ is not cyclic, then Lemma~\ref{lem:gcnoncyc} applies, and (ii) holds. Otherwise, either Lemma~\ref{lem:gccycab} or Lemma~\ref{lem:gccycnonab} applies, depending on whether $C$ is abelian. Specifically, if $C$ is abelian, then (i) or (ii) holds, and otherwise, (ii) or (iii) holds. Thus we observe from Lemma~\ref{lem:gccycnonab} and Proposition~\ref{prop:22grouplink} that if $G$ is finite, then (iii) holds if and only if $G$ satisfies Assumption~\ref{assump:22g}.
\end{proof}

The above theorem implies Theorems~\ref{thm:ncmainsummary} and \ref{thm:sumtwo} in the case where $G/Z(G)$ has a proper non-cyclic quotient. Additionally, if $\nc(G)$ has two nontrivial components, then Lemma~\ref{lem:gccycnonab}\ref{gccycnonab1} shows that $K \cap C = Z(C)$ for each maximal subgroup $K$ of $G$ distinct from the normal, non-abelian maximal subgroup $C$. As $Z(G) < Z(C)$ by Lemma~\ref{lem:ninzc}, we can use Lemma~\ref{lem:maxnormconnected} and Proposition~\ref{prop:gkzmprim} to deduce further information about infinite groups in this case. It is also easy to show using Theorem~\ref{thm:ncsummary} that if a group $G$ contains non-central normal subgroups $N_1$ and $N_2$, with $G/N_1$ and $G/N_2$ non-cyclic and $C_G(N_1) \ne C_G(N_2)$, then $\nc(G)$ is connected.

Note that Theorem~\ref{thm:ncsummary} applies whenever $G$ is a free product $\langle a \rangle * \langle b \rangle$ of nontrivial cyclic groups, e.g., with $N = \langle (ab)^k \rangle$ for some $k \ge 2$ and $C = \langle ab \rangle$. By appealing to the nature of $G$ as a free product, we prove in \cite[\S5.10]{saulthesis} that $\diam(\nc(G)) = 2$ unless $|a| = |b| = 2$, in which case $G$ is the infinite dihedral group, $\nc(G) \setminus \nd(G) = \{ab,ba\}$, and $\diam(\nd(G)) = 2$.

\section{Groups with each proper quotient cyclic}
\label{sec:finitesol}

In order to prove Theorems~\ref{thm:ncmainsummary} and \ref{thm:sumtwo}, it remains to consider the case where every proper quotient of $G/Z(G)$ is cyclic. As in the previous section, we will implicitly use Proposition~\ref{prop:ncbasics}\ref{nodiamonenc}, which states that each nontrivial component of $\nc(G)$ has diameter at least $2$. 

The following lemma generalises the classification given in \cite[\S3]{lmrd} of \emph{finite} groups whose proper quotients are all cyclic. Here, by a \emph{central extension} of $G$, we mean a group $H$ such that $H/Z(H) \cong G$. As above, an abstract group is primitive if it has a core-free maximal subgroup, which is a point stabiliser for the corresponding primitive action.

\begin{lem}
\label{lem:allquocyc}
Suppose that each proper quotient of $G$ is cyclic. Then one of the following holds:
\begin{enumerate}[label={(\roman*)},font=\upshape]
\item for each central extension $H$ of $G$ (including $G$ itself), every maximal subgroup of $H$ is normal in $H$, and hence $G$ is not primitive;
\item $G$ is soluble and primitive with a (unique) minimal normal subgroup and a cyclic point stabiliser; or
\item $G$ is insoluble and primitive, and $C_G(N) = 1$ for each normal subgroup $N \ne 1$ of $G$.
\end{enumerate}
\end{lem}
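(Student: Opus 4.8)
The plan is to analyse the structure of $G$ through its maximal subgroups, splitting immediately according to whether or not $G$ is primitive. Suppose first that $G$ is \emph{not} primitive, so that every maximal subgroup of $G$ is core-free in no case, i.e. every maximal subgroup $M$ of $G$ has $\mathrm{Core}_G(M) \ne 1$. Since each proper quotient of $G$ is cyclic, for such an $M$ the quotient $G/\mathrm{Core}_G(M)$ is cyclic, whence $M/\mathrm{Core}_G(M)$ is a subgroup of a cyclic group and is therefore normal in $G/\mathrm{Core}_G(M)$; pulling back gives $M \trianglelefteq G$. Thus every maximal subgroup of $G$ is normal. The same argument applies verbatim to any central extension $H$ of $G$: a maximal subgroup of $H$ has cyclic quotient $H/\mathrm{Core}_H(M)$ (a quotient of $H/Z(H) \cong G$ is cyclic, and one checks the relevant proper quotients of $H$ are cyclic), so it too is normal. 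This yields case (i). The one subtlety I would verify carefully is that the proper quotients of a central extension $H$ are controlled by those of $G$; this is where I would need to confirm that $Z(H)$ is cyclic-quotient-transparent, i.e. that a normal subgroup of $H$ not contained in $Z(H)$ projects to a nontrivial normal subgroup of $G$.

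\textbf{The primitive case.}
Now assume $G$ \emph{is} primitive, with core-free maximal subgroup $P$ (a point stabiliser). I would split into soluble and insoluble subcases. If $G$ is soluble and primitive, a standard fact is that $G$ has a unique minimal normal subgroup $V$ (the socle), which is abelian and self-centralising, with $P$ a complement; here I must additionally show $P$ is cyclic. Since $G/V \cong P$ is a proper quotient of $G$ (as $V \ne 1$), the hypothesis forces $P$ to be cyclic, giving case (ii). The uniqueness of the minimal normal subgroup follows because any two distinct minimal normal subgroups would give rise to distinct core-free maximal subgroups with incompatible cyclic quotients.

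\textbf{The insoluble primitive case.}
If $G$ is insoluble and primitive, I need to show $C_G(N) = 1$ for every nontrivial normal subgroup $N$. The key observation is that $C_G(N)$ is itself normal in $G$, so if it were nontrivial, then $G/C_G(N)$ would be a proper quotient (it is proper precisely when $C_G(N) \ne 1$) and hence cyclic. A cyclic quotient forces $G' \le C_G(N)$, so $G/C_G(N)$ being cyclic means $G$ acts on $N$ with abelian (indeed cyclic) action; combined with $N \cap C_G(N)$ being central in $N$, this would push $G$ towards solubility, contradicting insolubility. The cleanest route is: if $C_G(N) \ne 1$ for some $1 \ne N \trianglelefteq G$, take $N$ minimal normal; then either $C_G(N) = N$ (abelian socle, forcing solubility via the cyclic quotient $G/N$) or $C_G(N) \supsetneq N$ leads to a nontrivial proper quotient that is cyclic, again forcing $G$ soluble. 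The main obstacle I anticipate is handling the insoluble case cleanly without appealing to finiteness, since $G$ may be infinite; here I expect to rely only on the abstract facts that $C_G(N) \trianglelefteq G$ and that proper quotients are cyclic (hence abelian), reducing everything to the assertion that a nontrivial cyclic-quotient action of an insoluble group on a normal subgroup cannot trivialise the whole group's noncommutativity.
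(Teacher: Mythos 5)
Your overall architecture --- split on whether $G$ is primitive, use cyclicity of $G/\mathrm{Core}_G(M)$ to force maximal subgroups to be normal, then separate the primitive case into soluble and insoluble --- matches the paper's proof in outline (the paper instead splits the primitive case by whether some nontrivial normal subgroup has nontrivial centraliser, and only afterwards identifies the two subcases with solubility and insolubility, but that is a cosmetic difference). However, two steps have genuine problems. First, in the insoluble primitive case your ``cleanest route'' begins ``take $N$ minimal normal'': an infinite group need not possess a minimal normal subgroup, and this lemma must cover infinite $G$, so that step fails. The argument you sketch just before it is the one that actually works and needs no minimality: if $1 \ne N \trianglelefteq G$ and $C_G(N) \ne 1$, then since $Z(G) = 1$ in a non-cyclic primitive group, both $N$ and $C_G(N)$ are nontrivial and proper, so $G/N$ and $G/C_G(N)$ are cyclic, whence $G' \le N \cap C_G(N) = Z(N)$ is abelian and $G$ is metabelian, contradicting insolubility. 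This is essentially the contrapositive of the paper's Case (c), which takes the penultimate term $R$ of the derived series of a soluble $G$ and notes $C_G(R) \supseteq R \ne 1$. The same derived-series device repairs your soluble case: the ``standard fact'' you invoke is usually stated for finite groups, but a nontrivial abelian normal subgroup (which solubility provides) is automatically the unique minimal normal subgroup of a primitive group, complemented by the core-free point stabiliser; by contrast, your stated justification for uniqueness via ``incompatible cyclic quotients'' is not an argument.

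Second, the central-extension step is left open, and the check you propose would not close it: knowing that a non-central normal subgroup $N$ of $H$ projects to a nontrivial normal subgroup of $G$ only shows that $H/N$ is cyclic-by-central, hence abelian, not cyclic. The correct observation is that a maximal subgroup $M$ of $H$ either fails to contain $Z(H)$ --- in which case $H = MZ(H)$ and $M \trianglelefteq H$ trivially, a case your write-up omits entirely --- or contains $Z(H)$, in which case $\mathrm{Core}_H(M) \supseteq Z(H)$ and $H/\mathrm{Core}_H(M) \cong G/\mathrm{Core}_G(M/Z(H))$ is literally a proper quotient of $G$ (proper because $G$ is not primitive, so the core of $M/Z(H)$ in $G$ is nontrivial), hence cyclic. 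This is exactly how the paper disposes of central extensions in its Case (a).
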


\begin{proof}
We split the proof into three cases, which together account for all possibilities.

\medskip

\noindent \textbf{Case (a)}: $G$ is not primitive. Suppose that $G$ contains a maximal subgroup $M$, and let $J:= \mathrm{Core}_G(M)$. Then $J \ne 1$, and so $G/J$ is cyclic. Thus $M/J \trianglelefteq G/J$, and so $M \trianglelefteq G$. As each maximal subgroup of $H$ not containing $Z(H)$ is normal, (i) follows.

\medskip

\noindent \textbf{Case (b)}: $G$ is primitive, and $C_G(J) \ne 1$ for some nontrivial normal subgroup $J$ of $G$. Then $N:=C_G(J)$ is a minimal normal subgroup of $G$. If $G$ contains a distinct minimal normal subgroup $K$, then $K$ is non-abelian and equal to $K/(K \cap N) \cong NK/N$. In particular, $NK/N$ is not cyclic, and so neither is $G/N$, a contradiction. Hence $N$ is the unique minimal normal subgroup of $G$. Moreover, since $C_G(N)$ contains the nontrivial subgroup $J$ (which is now clearly equal to $N$), it follows that $N$ is abelian. As $G/N$ is cyclic, $G$ is soluble, and (ii) holds.

\medskip

\noindent \textbf{Case (c)}: $G$ is primitive, and $C_G(N) = 1$ for each nontrivial normal subgroup $N$ of $G$. If $G$ is soluble, then the penultimate subgroup $R$ in the derived series of $G$ is nontrivial and abelian, and hence $C_G(R) \ne 1$. However, $R \trianglelefteq G$, a contradiction. Thus (iii) holds.
\end{proof}

If case (i) of Lemma~\ref{lem:allquocyc} holds, then Theorem~\ref{thm:nilpncgeneral} applies. Hence it remains to consider cases (ii) and (iii). We will write $\overline H:=H/Z(G)$ when $H$ is a subgroup of $G$ containing $Z(G)$.

\begin{prop}
\label{prop:primsolquomax}
Suppose that $\overline G$ is soluble and primitive with every proper quotient cyclic, and let $L$ be a non-abelian maximal subgroup of $G$. Then $L \trianglelefteq G$ and $Z(L) \le Z(G)$.
\end{prop}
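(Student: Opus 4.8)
The plan is to pass to $\overline G = G/Z(G)$ and exploit its structure as a soluble primitive group. Since $\overline G$ is primitive it is not covered by case (i) of Lemma~\ref{lem:allquocyc}, and since it is soluble it is not covered by case (iii); hence case (ii) applies, so $\overline G$ has a unique minimal normal subgroup $\overline N$ (necessarily abelian, as $\overline G$ is soluble), the quotient $\overline G/\overline N$ is cyclic, and $\overline G$ has a cyclic core-free point stabiliser $\overline H$. As $\overline N \not\le \overline H$, maximality gives $\overline G = \overline N\,\overline H$, and $\overline N \cap \overline H$ is normalised by $\overline N$ (abelian) and by $\overline H$, hence by $\overline G$, so it lies in the core of $\overline H$ and is trivial. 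I would record two consequences. First, every core-free maximal subgroup $\overline K$ satisfies $\overline N \cap \overline K = 1$ and $\overline N\,\overline K = \overline G$, so $\overline K \cong \overline G/\overline N$ is cyclic. Second, $\overline N$ is self-centralising: writing $C := C_{\overline G}(\overline N)$, Dedekind's law gives $C = \overline N(C \cap \overline H)$, and $C \cap \overline H \le \overline H$ is normalised by $\overline H$ and by $\overline N$ (since $C \cap \overline H$ centralises $\overline N$), so $C \cap \overline H = 1$ and $C = \overline N$.

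For the normality of $L$, I would split into two cases. If $Z(G) \not\le L$, then maximality forces $G = Z(G)L$, so $G/L$ is a quotient of $Z(G)$ and hence abelian, giving $L \trianglelefteq G$. If instead $Z(G) \le L$, then $\overline L := L/Z(G)$ is a maximal subgroup of $\overline G$, and it is non-cyclic, since $\overline L$ cyclic would make $L/Z(L)$ cyclic and $L$ abelian. By the first consequence above, $\overline L$ is not core-free, so $\overline N \le \overline L$; then $\overline L/\overline N$ is a subgroup of the cyclic group $\overline G/\overline N$, so $\overline L \trianglelefteq \overline G$ and $L \trianglelefteq G$.

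For the inclusion $Z(L) \le Z(G)$, the case $Z(G) \not\le L$ is immediate: with $G = Z(G)L$, any element of $Z(L)$ centralises both $L$ and $Z(G)$, hence all of $G$. In the case $Z(G) \le L$, let $N$ denote the preimage of $\overline N$ in $G$, and consider $\overline{Z(L)} := Z(L)/Z(G)$, which is normal in $\overline G$ (as $Z(L)$ is characteristic in $L \trianglelefteq G$) and centralises $\overline L \supseteq \overline N$. The self-centralising property then yields $\overline{Z(L)} \le C_{\overline G}(\overline N) = \overline N$, so by minimality of $\overline N$ either $\overline{Z(L)} = 1$, whence $Z(L) = Z(G)$ as required, or $\overline{Z(L)} = \overline N$. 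In the latter situation $Z(L) = N$, and moreover $\overline N \le Z(\overline L)$ forces $\overline L \le C_{\overline G}(\overline N) = \overline N$, so $\overline L = \overline N$ and $L = N = Z(L)$ is abelian, contradicting the hypothesis on $L$.

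The main obstacle is establishing the self-centralising property of $\overline N$ and correctly disposing of the degenerate possibility $\overline{Z(L)} = \overline N$ (equivalently $\overline L = \overline N$). A non-abelian $L$ can have an abelian quotient $L/Z(G)$, so it is not enough to observe that $\overline L$ is abelian; the contradiction must instead be drawn out at the level of $N$ itself, via $C_{\overline G}(\overline N) = \overline N$ forcing $L = N = Z(L)$.
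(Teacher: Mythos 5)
Your proof is correct, and its overall skeleton matches the paper's: invoke Lemma~\ref{lem:allquocyc}(ii) to get the unique minimal normal subgroup $\overline N$ with $\overline G/\overline N$ cyclic, split on whether $Z(G) \le L$, and in the main case deduce $\overline N \le \overline L$ from the fact that a core-free maximal subgroup of $\overline G$ is a (cyclic) complement to $\overline N$, whence $L \trianglelefteq G$. The one place you diverge is the final step $Z(L) \le Z(G)$: the paper argues directly that $G/Z(L)$ is non-cyclic (as $L$ is non-abelian), so $\overline N \not\le \overline{Z(L)}$, and since every nontrivial normal subgroup of $\overline G$ contains $\overline N$, the normal subgroup $\overline{Z(L)}$ must be trivial. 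You instead first establish $C_{\overline G}(\overline N) = \overline N$ via Dedekind's law, place $\overline{Z(L)}$ inside $\overline N$, and rule out the case $\overline{Z(L)} = \overline N$ by showing it would force $\overline L = \overline N$ and hence $L = Z(L)$ abelian. Your route is slightly longer but equally valid, and it correctly identifies and disposes of the degenerate case; the self-centralising property you prove is a genuine extra fact about soluble primitive groups that the paper's shorter argument does not need.
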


\begin{proof}
It follows from Lemma~\ref{lem:allquocyc} that $\overline G$ contains a unique minimal normal subgroup $\overline N$, and a cyclic point stabiliser $\overline M$ such that $\overline G = \overline N \sd \overline M$. Suppose first that $Z(G) \not\le L$. Then $L \trianglelefteq G$. Additionally, there is no $x \in G$ satisfying $L = C_G(x)$, and thus $Z(L) < Z(G)$.

Assume from now on that $Z(G) \le L$, and note that $\overline L$ is a non-cyclic maximal subgroup of $\overline G$. Thus $\overline L$ is not a complement of $\overline N$ in $\overline G$, and so $\overline L$ is not core-free in $\overline G$. Hence $\overline N \le \overline L$. Moreover, as $\overline G/\overline N$ is cyclic, we see that $\overline L/\overline N \trianglelefteq \overline G/\overline N$, and it follows that $L \trianglelefteq G$.

Next, $\overline G/\overline{Z(L)}$ is isomorphic to $G/Z(L)$, which is not cyclic, and so $\overline N \not\le \overline{Z(L)}$. As $\overline N$ lies in each nontrivial normal subgroup of $\overline G$, we conclude that $\overline{Z(L)} = 1$, and so $Z(L) = Z(G)$.
\end{proof}

\begin{lem}
\label{lem:ncprimsolquo}
Suppose that $\overline G$ is soluble and primitive with every proper quotient cyclic, and that $\nc(G)$ has an edge. Then $\nc(\overline G)$ has isolated vertices, and $\diam(\nd(G)) = 2$.
\end{lem}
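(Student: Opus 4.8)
The plan is to first pin down the structure of $\overline G$ via Lemma~\ref{lem:allquocyc}, then treat the two assertions separately; the second one hinges on the normality of the non-abelian maximal subgroups, which lets us collapse a generic distance bound of $3$ down to $2$.

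Since $\overline G$ is soluble and primitive with every proper quotient cyclic, we are in case (ii) of Lemma~\ref{lem:allquocyc}: $\overline G = \overline N \rtimes \overline M$, where $\overline N$ is the unique minimal normal subgroup (elementary abelian, and an irreducible $\overline M$-module) and $\overline M$ is a cyclic, core-free maximal subgroup, namely the point stabiliser. First I would record that $Z(\overline G) = 1$ (a nontrivial centre would be a normal subgroup containing the non-central $\overline N$) and that $\overline M \ne 1$ (otherwise $\overline G$ is cyclic, forcing $G$ abelian and contradicting the existence of an edge in $\nc(G)$). Because $\overline N$ is an irreducible $\overline M$-module, $\overline G = \langle \overline m, \overline n \rangle$ for a generator $\overline m$ of $\overline M$ and any $\overline n \in \overline N \setminus \{1\}$, so $\overline G$ is $2$-generated. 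The element $\overline m$ lies in the unique maximal subgroup $\overline M$ (any subgroup containing a generator of the cyclic maximal subgroup $\overline M$ contains $\overline M$) and in $Z(\overline M) = \overline M$, while $\overline m \notin Z(\overline G) = 1$; hence Proposition~\ref{prop:ncbasics}\ref{isolvertnc} shows that $\overline m$ is an isolated vertex, which proves the first assertion.

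For $\diam(\nd(G)) = 2$, I may assume $G$ is $2$-generated, since otherwise Proposition~\ref{prop:ncbasics}\ref{highgengp} already gives that $\nc(G)$ is connected with diameter $2$ and has no isolated vertices, so $\nd(G) = \nc(G)$ has diameter $2$. The key structural input is a clean description of the non-isolated vertices: by Proposition~\ref{prop:ncbasics}\ref{isolvertnc}, every non-isolated vertex $g$ lies in $L \setminus Z(L)$ for some maximal subgroup $L$, and such an $L$ is necessarily non-abelian (as $g \notin Z(L)$). By Proposition~\ref{prop:primsolquomax}, every non-abelian maximal subgroup $L$ of $G$ satisfies $L \trianglelefteq G$ and $Z(L) \le Z(G)$. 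This last inclusion is the crucial point: since the vertices of $\nc(G)$ are non-central, no vertex can lie in the centre of any non-abelian maximal subgroup.

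Given two non-isolated vertices $x$ and $y$, I would choose non-abelian maximal subgroups $L \ni x$ and $M \ni y$ with $x \in L \setminus Z(L)$ and $y \in M \setminus Z(M)$. If $L = M$, then $d(x,y) \le 2$ by Proposition~\ref{prop:ncbasics}\ref{propernc}. Otherwise both $L$ and $M$ are normal non-abelian maximal subgroups, so in particular $M \trianglelefteq G$, and Lemma~\ref{lem:ncmaxnormcombined} applies to $(x,L,y,M)$ and gives $d(x,y) \le 3$, with equality only if $x \in Z(M)$ or $y \in Z(L)$. But $Z(M), Z(L) \le Z(G)$ while $x, y \notin Z(G)$, so neither exceptional configuration can occur, and hence $d(x,y) \le 2$. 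As $\nc(G)$ has an edge, $\nd(G)$ is a nonempty graph in which every pair of vertices is at distance at most $2$, so it is connected, and Proposition~\ref{prop:ncbasics}\ref{nodiamonenc} forces its diameter to be exactly $2$. The main obstacle is precisely the case $L \ne M$: a priori, vertices lying in different non-abelian maximal subgroups could be far apart, and the generic bound from Lemma~\ref{lem:ncmaxnormcombined} is only $3$. The proof overcomes this by using Proposition~\ref{prop:primsolquomax} to trap the centres of these maximal subgroups inside $Z(G)$, which rules out the two distance-$3$ configurations and brings the bound down to $2$.
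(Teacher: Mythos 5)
Your proposal is correct and follows essentially the same route as the paper: identify the isolated vertices of $\nc(\overline G)$ as the generators of the cyclic point stabilisers, then combine Proposition~\ref{prop:ncbasics}\ref{isolvertnc}, Proposition~\ref{prop:primsolquomax} and Lemma~\ref{lem:ncmaxnormcombined} to force $d(x,y)\le 2$ for non-isolated $x,y$, the key point being that $Z(L),Z(M)\le Z(G)$ rules out the distance-$3$ configurations. The only cosmetic differences are that the paper verifies the isolated vertices directly rather than via Proposition~\ref{prop:ncbasics}\ref{isolvertnc}, and does not separate the case $L=M$ (which Lemma~\ref{lem:ncmaxnormcombined} does not require).
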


\begin{proof}
Since the primitive group $\overline G$ has a minimal normal subgroup by Lemma~\ref{lem:allquocyc}, it is clear that each point stabiliser $\overline K$ of $\overline G$ is cyclic. Additionally, $Z(\overline G) = 1$, and since $\overline G = \langle k, g \rangle$ whenever $k$ is a generator for $\overline K$ and $g \in \overline G \setminus \overline K$, every such $k$ is an isolated vertex of $\nc(\overline G)$.

Now, let $x,y \in \nd(G)$, and assume that $G$ is $2$-generated; else $\diam(\nc(G)) = 2$ by Proposition~\ref{prop:ncbasics}\ref{highgengp}. By Propositions~\ref{prop:ncbasics}\ref{isolvertnc} and~\ref{prop:primsolquomax}, there exist normal maximal subgroups $L$ and $M$ of $G$ with $x \in L \setminus Z(L)$, $y \in M \setminus Z(M)$ and $Z(L),Z(M) \le Z(G)$. By Lemma~\ref{lem:ncmaxnormcombined}, $d(x,y) \le 2$.
\end{proof}

As we mentioned in \S\ref{sec:intro}, the generating graph of a finite group is connected precisely when all proper quotients of that group are cyclic \cite[Theorem 1 \& Corollary 2]{burnessspread}. However, this is not the case for the non-commuting, non-generating graph. Indeed, $G:=\mathrm{AGL}(1,5) = \overline G$ satisfies the hypotheses of Lemma~\ref{lem:ncprimsolquo}, and so $\nd(G) \ne \nc(G)$. On the other hand, Magma computations show that $H:=D_{10} \sd C_8$ is a non-split extension of $Z(H)$ by $G$, and $\diam(\nc(H)) = 2$.

In the following theorem, we assume that $G$ itself is primitive, so that $Z(G) = 1$.

\begin{lem}
\label{lem:ncimprimcyc}
Suppose that $G$ is insoluble, non-simple and primitive with every proper quotient cyclic. Then $\nd(G)$ is connected with diameter $2$ or $3$. Moreover, if $\nc(G)$ has an isolated vertex $r$, then $G$ is infinite and 
each proper subgroup of $G$ containing $r$ is core-free. Finally, if $G$ is $2$-generated, then it contains a normal maximal subgroup with trivial centre.
\end{lem}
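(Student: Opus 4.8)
The plan is to record the structural consequences of the hypotheses, then prove the three assertions in the order: normal maximal subgroup, isolated vertices, diameter, since the latter two rely on the first. Since $G$ is insoluble and primitive with every proper quotient cyclic, case (iii) of Lemma~\ref{lem:allquocyc} applies, so $Z(G)=1$ and $C_G(N)=1$ for every nontrivial normal subgroup $N$ of $G$; in particular $\nc(G)$ has vertex set $G\setminus\{1\}$. For the diameter claims I may assume $G$ is $2$-generated, as otherwise Proposition~\ref{prop:ncbasics}\ref{highgengp} gives $\diam(\nc(G))=2$. To produce the normal maximal subgroup, I would first note that $G$ cannot be perfect: a perfect group all of whose proper quotients are cyclic (hence abelian, hence trivial) would be simple, contrary to hypothesis. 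Thus $G/[G,G]$ is a nontrivial $2$-generated abelian proper quotient, so it is cyclic and has a subgroup of prime index $p$; its preimage $M$ is a normal maximal subgroup with $G/M\cong C_p$. As $Z(M)$ is characteristic in $M\trianglelefteq G$, it is normal in $G$ and centralised by $M\neq1$; since $C_G(N)=1$ for nontrivial normal $N$, this forces $Z(M)=1$, so $M$ is non-abelian and the final assertion holds.

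For the isolated-vertex statement, suppose $\nc(G)$ has an isolated vertex $r$. Then $G$ is $2$-generated by Proposition~\ref{prop:ncbasics}\ref{highgengp}, and by Proposition~\ref{prop:ncbasics}\ref{isolvertnc} the element $r$ lies in a unique maximal subgroup $M_r$ and in $Z(M_r)$, so $M_r=C_G(r)$. Since $r$ centralises $\mathrm{Core}_G(M_r)\trianglelefteq G$ and $C_G(N)=1$ for nontrivial normal $N$, I obtain $\mathrm{Core}_G(M_r)=1$; and by Proposition~\ref{prop:fingenmaxsub} every proper subgroup containing $r$ lies inside the unique maximal subgroup $M_r$, so its core is contained in $\mathrm{Core}_G(M_r)=1$, i.e.\ it is core-free. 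Finally, if $G$ were finite then Proposition~\ref{prop:isolabmax} would make $M_r$ abelian, giving $G$ an abelian (core-free) maximal subgroup and hence solubility by \cite{herstein}, contradicting insolubility; so $G$ is infinite.

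For connectivity and the diameter bound, recall from Proposition~\ref{prop:ncbasics}\ref{propernc} that the subgraph induced by $M\setminus\{1\}=M\setminus Z(M)$ is connected of diameter $2$, and that every non-isolated $y\in G\setminus M$ has a neighbour in $M$, by applying Lemma~\ref{lem:centremaxmax}\ref{centremaxmax1} to $(y,L_y,M)$, where $L_y$ is a maximal subgroup with $y\in L_y\setminus Z(L_y)$ (which exists by Proposition~\ref{prop:ncbasics}\ref{isolvertnc}). This already gives connectivity and $\diam(\nd(G))\le4$, so by Proposition~\ref{prop:ncbasics}\ref{nodiamonenc} it remains to improve the bound to $3$. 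The decisive step is the following claim for non-isolated $x,y\in G\setminus M$: writing $D_x=L_x\cap M$ and $D_y=L_y\cap M$ (each nontrivial, as otherwise $L_x$ or $L_y$ would be cyclic of order $p$, contradicting non-centrality), \emph{either} $[D_x,D_y]\neq1$ \emph{or} $L_x=L_y$. Indeed, if $[D_x,D_y]=1$ then $D_y\le C_G(D_x)\le N_G(D_x)$, and since $N_G(D_x)\supseteq L_x$ is either $L_x$ or $G$, maximality together with $C_G(N)=1$ forces $N_G(D_x)=L_x$ (the case $N_G(D_x)=G$ would make $D_x$ a nontrivial normal subgroup centralised by $D_y\neq1$); symmetrically $D_x=D_y$ and $L_y\le N_G(D_x)=L_x$, whence $L_x=L_y$.

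If $L_x=L_y$, then $x,y$ lie non-centrally in a common maximal subgroup and $d(x,y)\le2$ by Proposition~\ref{prop:ncbasics}\ref{propernc}; while if $[D_x,D_y]\neq1$ I would choose $b\in D_y$ with $[b,y]\neq1$ and $b$ not centralising $D_x$ (possible, as $C_{D_y}(y)<D_y$ by Lemma~\ref{lem:centremaxmax}\ref{centremaxmax1} and $C_{D_y}(D_x)<D_y$ here), then $a\in D_x$ with $[a,x]\neq1$ and $[a,b]\neq1$, yielding the length-$3$ path $(x,a,b,y)$ since $\langle x,a\rangle\le L_x$, $\langle a,b\rangle\le M$ and $\langle b,y\rangle\le L_y$ are all proper. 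The remaining pairs (both in $M$, or one in $M$ and one external) are bounded by $2$ and $3$ respectively using the first two facts above, so $\diam(\nd(G))\le3$, giving diameter $2$ or $3$. The main obstacle is exactly the displayed claim: the point is that the normaliser dichotomy $N_G(D_x)\in\{L_x,G\}$, forced by maximality of $L_x$ and combined with $C_G(N)=1$, is precisely what excludes the otherwise-possible configuration of commuting intersections that would only give diameter $4$.
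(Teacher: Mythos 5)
Your proof is correct. The first two parts run essentially parallel to the paper's: the paper obtains $M$ as a maximal subgroup over an arbitrary nontrivial proper normal subgroup (rather than over $[G,G]$) and gets $Z(M)=1$ from Lemma~\ref{lem:allquocyc}, and it derives core-freeness of the subgroups containing an isolated vertex $r$ from the observation that any non-core-free subgroup $H$ is normal (since $G/\mathrm{Core}_G(H)$ is cyclic), whereas you argue directly that $r\in C_G(\mathrm{Core}_G(M_r))$ forces $\mathrm{Core}_G(M_r)=1$; both routes also use Proposition~\ref{prop:isolabmax} and \cite{herstein} identically to rule out finiteness. The genuine difference is in the bound $d(x,y)\le 3$ for non-isolated $x,y\notin M$. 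The paper splits into cases according to whether one of the maximal subgroups $K\ni x$, $L\ni y$ is normal in $G$, and, when both are core-free, whether $K\cap M\le L$; you instead establish the single dichotomy that either $[L_x\cap M,\,L_y\cap M]\ne 1$ or $L_x=L_y$, via the normaliser argument $N_G(L_x\cap M)\in\{L_x,G\}$ combined with $C_G(N)=1$ for $1\ne N\trianglelefteq G$. After that, both proofs build the same kind of path $x\sim a\sim b\sim y$ with $a\in L_x\cap M$ and $b\in L_y\cap M$, using Lemma~\ref{lem:centremaxmax}\ref{centremaxmax1} and the fact that a group is not the union of two proper subgroups. Your dichotomy is arguably tidier: it absorbs the paper's ``$K$ normal'' and ``$K\cap M\le L$'' subcases into the commutator branch (if $L_x\trianglelefteq G$ then $L_x\cap M\trianglelefteq G$, so the commuting alternative is immediately contradicted), at the cost of making the exclusion of the ``commuting intersections'' configuration the one load-bearing step, exactly as you note. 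Both arguments are equally elementary and rest on the same structural inputs, so this is a reorganisation rather than a new idea, but it is a clean and correct one.
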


\begin{proof}
We may again assume that $G$ is $2$-generated, else $\diam(\nc(G)) = 2$ by Proposition~\ref{prop:ncbasics}\ref{highgengp}. Let $N$ be a nontrivial proper normal subgroup of $G$, and $M$ a maximal subgroup containing $N$. Then for each overgroup $H$ of $N$ in $G$, the cyclic group $G/N$ normalises $H/N$, and hence $H \trianglelefteq G$. Thus each non-normal subgroup of $G$ is core-free. In particular, $M \trianglelefteq G$, and $Z(M) = 1$ by Lemma~\ref{lem:allquocyc}. Since no maximal subgroup of a finite insoluble group is abelian \cite{herstein}, the statement about an isolated vertex follows from Propositions~\ref{prop:ncbasics}\ref{isolvertnc} and~\ref{prop:isolabmax}. 

Now, let $x$ and $y$ be non-isolated vertices of $\nc(G)$. Then Proposition~\ref{prop:ncbasics}\ref{isolvertnc} shows that $x \in K \setminus Z(K)$ and $y \in L \setminus Z(L)$ for some maximal subgroups $K$ and $L$ of $G$. We may assume that $K \ne L$, as otherwise $d(x,y) \le 2$ by Proposition~\ref{prop:ncbasics}\ref{propernc}. Observe also that $K \cap M$ and $L \cap M$ are nontrivial maximal subgroups of $K$ and $L$, respectively.

Suppose first that $K \trianglelefteq G$, so that $Z(K) = 1$ by this proof's first paragraph. We may assume that $y \notin K$, as otherwise we could set $L = K$. Applying Lemma~\ref{lem:centremaxmax}\ref{centremaxmax1} to $(y,L,K)$ yields $y \sim h$ for some $h \in K \cap L \setminus C_{K \cap L}(y)$. As $d(x,h) \le 2$ by Proposition~\ref{prop:ncbasics}\ref{propernc}, we obtain $d(x,y) \le 3$.

By symmetry, we may assume from now on that neither $K$ nor $L$ is normal in $G$, i.e., that both are core-free in $G$, and that $x,y \notin M$. Then the nontrivial subgroups $K \cap M$ and $L \cap M$ are not normal in $G$. Since $K \cap M \trianglelefteq K$ and $\langle K, L \rangle = G$, it follows that $L$ does not centralise $K \cap M$. Additionally, applying Lemma~\ref{lem:centremaxmax}\ref{centremaxmax1} to $(x,K,M)$ gives $C_{K \cap M}(x) < K \cap M$. Thus if $K \cap M \le L$, then there exists an element $a \in K \cap M$ that centralises neither $x$ nor $L$. Hence $x \sim a$, and Proposition~\ref{prop:ncbasics}\ref{propernc} yields $d(a,y) \le 2$. Therefore, $d(x,y) \le 3$.

If instead $K \cap M \not\le L$, then $\langle K \cap M, L \rangle = G$. As $L \cap M$ is normalised by $L$ but not $G$, we deduce that $K \cap M \not \le C_G(L \cap M)$. Since $C_{K \cap M}(x) < K \cap M$, and similarly $C_{L \cap M}(y) < L \cap M$, it follows that there exists an element $b \in K \cap M$ that centralises neither $L \cap M$ nor $x$, and an element $c \in L \cap M$ that centralises neither $b$ nor $y$. Thus $x \sim b \sim c \sim y$ and $d(x,y) \le 3$.
\end{proof}

Using Magma, we see that $G_1:=A_5 \wr C_2$ and $G_2:=(A_5 \times A_5) \sd C_4$ (with a point stabiliser of index $25$) satisfy the hypotheses of Lemma~\ref{lem:ncimprimcyc}, with $\diam(\nc(G_1)) = 2$ and $\diam(\nc(G_2)) = 3$.

\begin{lem}
\label{lem:ncinsprimquo}
Suppose that $\overline G$ is insoluble, non-simple and primitive with every proper quotient cyclic.
\begin{enumerate}[label={(\roman*)},font=\upshape]
\item \label{ncinsprimquo1} The subgraph $X$ of $\nc(G)$ induced by the vertices in $\{g \in G \setminus Z(G) \mid Z(G)g \in \nd(\overline{G})\}$ has diameter at most $k:=\diam(\nd(\overline{G})) \in \{2,3\}$. Hence if $\nc(\overline G)$ has no isolated vertices, and in particular if $\overline G$ is finite, then $\diam(\nc(G)) \le k$.
\item If $X \ne \nd(G)$, then $\nd(G)$ is connected with diameter at most $4$.
\end{enumerate}
\end{lem}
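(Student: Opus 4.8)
The plan is to build a single connected ``hub'' inside $\nd(G)$, of diameter $2$, to which every non-isolated vertex of $\nc(G)$ is adjacent; the diameter bound $1+2+1 = 4$ and connectedness then follow immediately. First I would dispose of the non-$2$-generated case: if $G$ is not $2$-generated, then Proposition~\ref{prop:ncbasics}\ref{highgengp} gives that $\nc(G)$ is connected with diameter $2$, whence $\nd(G) = \nc(G)$ and there is nothing more to prove. So assume $G$ is $2$-generated; then $\overline G = G/Z(G)$ is $2$-generated as well. Since $\overline G$ is insoluble, non-simple and primitive with every proper quotient cyclic, the final assertion of Lemma~\ref{lem:ncimprimcyc}, applied to $\overline G$, supplies a normal maximal subgroup $\overline{M_0}$ of $\overline G$ with $Z(\overline{M_0}) = 1$. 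Let $M_0$ be its preimage in $G$. Then $M_0$ is a normal maximal subgroup of $G$ (by the correspondence of subgroups containing $Z(G)$); it is non-abelian, since otherwise $\overline{M_0}$ would be abelian and the proper quotient $\overline G/\overline{M_0}$ cyclic, forcing $\overline G$ to be soluble; and $Z(M_0) = Z(G)$, because the image of $Z(M_0)$ in $\overline{M_0}$ lies in $Z(\overline{M_0}) = 1$.

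The hub is the set $M_0 \setminus Z(M_0) = M_0 \setminus Z(G)$. As $M_0$ is a proper non-abelian subgroup, Proposition~\ref{prop:ncbasics}\ref{propernc} shows that the subgraph of $\nc(G)$ induced by this set is connected with diameter $2$; in particular, every element of $M_0 \setminus Z(G)$ is a non-isolated vertex of $\nc(G)$, so the hub lies in $\nd(G)$. I would then prove the central claim: every non-isolated vertex $g$ of $\nc(G)$ either lies in $M_0 \setminus Z(G)$ or is adjacent to some vertex of $M_0 \setminus Z(G)$. This is immediate if $g \in M_0 \setminus Z(G)$. If instead $g \notin M_0$, then since $g$ is non-isolated, Proposition~\ref{prop:ncbasics}\ref{isolvertnc} provides a maximal subgroup $P$ of $G$ with $g \in P \setminus Z(P)$. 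Applying Lemma~\ref{lem:centremaxmax}\ref{centremaxmax1} to the triple $(g,P,M_0)$ -- valid because $M_0$ is a normal maximal subgroup of $G$ while $g \in P \setminus Z(P)$ and $g \notin M_0$ -- yields an element $h \in P \cap M_0$ with $\{g,h\}$ an edge of $\nc(G)$. Since $[g,h] \ne 1$, we have $h \notin Z(G) = Z(M_0)$, so $h \in M_0 \setminus Z(G)$, proving the claim.

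Granting the claim, the conclusion follows at once. For non-isolated vertices $g,g'$, choose $m,m' \in M_0 \setminus Z(G)$ with $d(g,m) \le 1$ and $d(g',m') \le 1$ (taking $m = g$ or $m' = g'$ if these already lie in the hub). As $d(m,m') \le 2$ within the hub, we obtain $d(g,g') \le 1 + 2 + 1 = 4$. Hence $\nd(G)$ is connected with $\diam(\nd(G)) \le 4$, as required. I would note in passing that the hypothesis $X \ne \nd(G)$ is not actually used in this argument: it merely identifies the case left open by part~(i), which already yields the sharper bound $k \le 3$ when $X = \nd(G)$.

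I expect the main obstacle to be locating the correct hub and verifying its properties, rather than the final distance count. The crucial inputs are the existence of a normal maximal subgroup of $\overline G$ with trivial centre -- which comes from Lemma~\ref{lem:ncimprimcyc} and ultimately rests on the insolubility and primitivity of $\overline G$ -- together with the equality $Z(M_0) = Z(G)$. This equality is precisely what guarantees that the connector $h$ produced by Lemma~\ref{lem:centremaxmax}\ref{centremaxmax1} is genuinely non-central in $M_0$, and hence a legitimate hub vertex; without it, one could only conclude $h \notin Z(G)$, which would not place $h$ in $M_0 \setminus Z(M_0)$.
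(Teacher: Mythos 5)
Your argument for part~(ii) is correct and follows essentially the paper's route: both proofs extract, via the final assertion of Lemma~\ref{lem:ncimprimcyc} applied to $\overline G$, a normal maximal subgroup $M_0$ of $G$ with $Z(M_0)=Z(G)$, use Proposition~\ref{prop:ncbasics}\ref{propernc} to make $M_0\setminus Z(G)$ a diameter-$2$ hub, and use Lemma~\ref{lem:centremaxmax}\ref{centremaxmax1} to attach every non-isolated vertex outside $M_0$ to that hub. Your version is in fact slightly cleaner than the paper's: the paper splits the endpoints into $X$ and $\nd(G)\setminus X$ and bounds $d(y,x)\le 1+\diam(X)\le 4$ using part~(i), whereas you route everything through the hub and get $1+2+1=4$ uniformly, which also substantiates your observation that the hypothesis $X\ne\nd(G)$ is not needed for the bound.

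The genuine gap is that you never prove part~(i), and it is not a consequence of your hub argument. Part~(i) asserts that the subgraph $X$ has diameter at most $k=\diam(\nd(\overline G))\in\{2,3\}$, and hence that $\diam(\nc(G))\le k$ whenever $\nc(\overline G)$ has no isolated vertices (in particular when $\overline G$ is finite). Your construction only yields the weaker bound $\diam(\nd(G))\le 4$, so the sharper conclusion $\le 3$ in the finite case is not recovered. The paper obtains (i) in one line by combining two ingredients you do not invoke: Lemma~\ref{lem:ncimprimcyc} applied to $\overline G$ itself (giving $\nd(\overline G)$ connected of diameter $2$ or $3$, and no isolated vertices when $\overline G$ is finite) together with the quotient-lifting Lemma~\ref{lem:quotientnc}\ref{quotientnc2} with $N=Z(G)$, which transfers a path $(\overline x,\overline{g_1},\dots,\overline{g_n})$ in $\nd(\overline G)$ to a path of the same length in $\nc(G)$. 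Since your closing remark leans on part~(i) ("which already yields the sharper bound $k\le 3$ when $X=\nd(G)$"), this missing piece should be supplied rather than assumed.
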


\begin{proof}
As $Z(\overline G) = 1$, (i) follows from Lemma~\ref{lem:ncimprimcyc}, and Lemma~\ref{lem:quotientnc}\ref{quotientnc2} with $N = Z(G)$.

To prove (ii), we may assume that $X \ne \nd(G)$ and (by Proposition~\ref{prop:ncbasics}\ref{highgengp}) that $G$ is $2$-generated. Then $\overline G$ is also $2$-generated, and Lemma~\ref{lem:ncimprimcyc} implies that $G$ contains a normal maximal subgroup $M$ with $Z(M) = Z(G)$, and that each element of $M \setminus Z(G)$ lies in $X$.

Let $y,y' \in \nd(G) \setminus X$, so that $y,y' \notin M$. Then Proposition~\ref{prop:ncbasics}\ref{isolvertnc} shows that $y \in K \setminus Z(K)$ for some maximal subgroup $K$ of $G$, and applying Lemma~\ref{lem:centremaxmax}\ref{centremaxmax1} to $(y,K,M)$ yields $y \sim m$ for some $m \in K \cap M$. Similarly, $y' \sim m'$ for some $m' \in M$. Proposition~\ref{prop:ncbasics}\ref{propernc} gives $d(m,m') \le 2$, and so $d(y,y') \le d(y,m)+d(m,m')+d(m',y) \le 4$.

By (i), it remains to consider $d(y,x)$, with $x \in X$. We also observe from (i) that $d(m,x) \le 3$. Hence $d(y,x) \le d(y,m) + d(m,x) \le 4$, and we conclude that $\diam(\nd(G)) \le 4$.
\end{proof}

Recall from above that if $G = \overline G = (A_5 \times A_5) \sd C_4$, then $\diam(\nc(G)) = 3$. We can use Magma to show that the non-commuting, non-generating graphs of the central extensions $G \times C_2$ and $G \times C_3$ of $G$ are connected with diameter $2$ and $3$, respectively (cf. \cite[Proposition 16]{nilppaper}).

\begin{ques}
\label{ques:imprimnc}
Does there exist an infinite group $G$ such that $\overline G$ satisfies the hypotheses of Lemma~\ref{lem:ncinsprimquo} and either $\diam(\nd(G)) = 4$, or $\diam(\nd(G)) = 3$ and $\nd(G) \ne \nc(G)$?
\end{ques}

In the following example, we prove that $\nc(G_k)$ is connected (and hence has no isolated vertices) for a certain infinite family of infinite groups $G_k$ that satisfy the hypotheses of Lemma~\ref{lem:ncimprimcyc}. 

\begin{eg}
\label{eg:houghtonsubgroups}
For each positive integer $k$, let $G_k$ be the group of permutations of $\mathbb{Z}$ generated by the simple alternating group $\mathrm{Alt}(\mathbb{Z})$ and the translation $t_k$ that maps $x$ to $x+k$ for all $x \in \mathbb{Z}$. Then $\mathrm{Alt}(\mathbb{Z})$ is the unique minimal normal subgroup of the insoluble group $G_k$ \cite[Proposition 2.5]{coxrinf}. Moreover, $G_k$ is $2$-generated, and every proper quotient of $G_k$ is cyclic \cite[Theorem 4.1]{cox}. Observe that any maximal subgroup of $G_k$ containing $t_k$ is core-free, and so $G_k$ is primitive.

Assume now that $k \ge 3$, and let $g \in G_k \setminus \{1\}$. Since $\mathrm{Alt}(\mathbb{Z})$ is generated by its $3$-cycles and $C_{G_k}(\mathrm{Alt}(\mathbb{Z})) = 1$, it follows that there exists a $3$-cycle $\alpha \in \mathrm{Alt}(\mathbb{Z})$ with $[g,\alpha] \ne 1$. Furthermore, the proofs of \cite[Lemmas 4.2--4.3]{cox} show that no $3$-cycle in $\mathrm{Alt}(\mathbb{Z})$ lies in a generating pair for $G_k$. Hence $g \sim \alpha$. In particular, $\nc(G_k) = \nd(G_k)$. Using Lemmas~\ref{lem:ncimprimcyc} and~\ref{lem:ncinsprimquo}\ref{ncinsprimquo1}, we conclude that $\nc(G_k)$ is connected with diameter $2$ or $3$, as is $\nc(H)$ for each central extension $H$ of $G_k$.
\end{eg}

It would be interesting to determine $\diam(\nc(G_k))$ precisely for each $k \ge 3$, and to investigate $\nc(G_k)$ when $k \in \{1,2\}$, where each $g \in G_k \setminus \{1\}$ lies in a generating pair \cite[Theorem 6.1]{cox}.

We now prove this paper's main theorems.

\begin{proof}[Proof of Theorems~\ref{thm:ncmainsummary} and \ref{thm:sumtwo}]
If $\overline G = G/Z(G)$ has a proper non-cyclic quotient, then $G$ contains a normal subgroup $N$ such that $Z(G) < N$ and $G/N$ is not cyclic. Thus in this case Theorem~\ref{thm:ncsummary} applies, and case (i), (iii), (iv) or (v) of Theorem~\ref{thm:ncmainsummary} holds. Otherwise, one of the three cases in Lemma~\ref{lem:allquocyc} applies, with $\overline G$ in place of $G$. If every maximal subgroup of $G$ is normal, or if $\overline G$ is soluble and primitive, then we can use Theorem~\ref{thm:nilpncgeneral} or Lemma~\ref{lem:ncprimsolquo}, respectively, to show that case (i) or (iii) of Theorem~\ref{thm:ncmainsummary} holds. If instead $\overline G$ is insoluble and primitive, then Lemma~\ref{lem:ncinsprimquo} shows that case (ii) or (iii) holds. This completes the proof of Theorem~\ref{thm:ncmainsummary}.

Assume now that $G$ is finite. By the previous paragraph, if $\nc(G)$ is the union of two connected components of diameter $2$, then Theorem~\ref{thm:ncsummary} applies. That theorem and Proposition~\ref{prop:22grouplink} imply that $\nc(G)$ is such a union if and only if $G$ satisfies Assumption~\ref{assump:22g}, and Theorem~\ref{thm:sumtwo} follows.
\end{proof}

We see from the above proof that if $\nc(G)$ has two nontrivial connected components, then Theorem~\ref{thm:ncsummary} applies. Hence, as discussed below the proof of that theorem, Lemma~\ref{lem:maxnormconnected} and Proposition~\ref{prop:gkzmprim} yield further information about the structures of infinite groups in this case.

\subsection*{Acknowledgments}
This work was supported by the University of St Andrews (St Leonard's International Doctoral Fees Scholarship \& School of Mathematics and Statistics PhD Funding Scholarship), and by EPSRC grant number EP/W522422/1. The author is grateful to Jendrik Brachter for the proof of Proposition~\ref{prop:isolabmax} in the case $M \not\trianglelefteq G$; to Colva Roney-Dougal, Peter Cameron, Martyn Quick and Donna Testerman for helpful discussions regarding the original thesis on which this work is based; and to Colva and an anonymous referee for useful comments on this paper.

\bibliographystyle{abbrv}
\bibliography{Nonsimplerefs}

\begin{thebibliography}{10}

\bibitem{aalipour}
G.~Aalipour, S.~Akbari, P.~J. Cameron, R.~Nikandish, and F.~Shaveisi.
\newblock On the structure of the power graph and the enhanced power graph of a
  group.
\newblock {\em Electron. J. Combin.}, 24(3):Paper No. 3.16, 18, 2017.

\bibitem{abdollahi}
A.~Abdollahi, S.~Akbari, and H.~R. Maimani.
\newblock Non-commuting graph of a group.
\newblock {\em J. Algebra}, 298(2):468--492, 2006.

\bibitem{adnan}
S.~Adnan.
\newblock On groups having exactly {$2$} conjugacy classes of maximal
  subgroups. {II}.
\newblock {\em Atti Accad. Naz. Lincei Rend. Cl. Sci. Fis. Mat. Nat. (8)},
  68(3):179, 1980.

\bibitem{belkphd}
J.~M. Belk.
\newblock {\em {T}hompson's group $F$}.
\newblock PhD thesis, Cornell University, August 2004.

\bibitem{smallgroups}
H.~U. Besche, B.~Eick, and E.~A. O'Brien.
\newblock The groups of order at most 2000.
\newblock {\em Electron. Res. Announc. Amer. Math. Soc.}, 7:1--4, 2001.

\bibitem{powerdiff}
S.~Biswas, P.~J. Cameron, A.~Das, and H.~K. Dey.
\newblock On the difference of the enhanced power graph and the power graph of
  a finite group.
\newblock \textup{Preprint, 2022, arXiv:2206.12422}.

\bibitem{magma}
W.~Bosma, J.~Cannon, and C.~Playoust.
\newblock The {M}agma algebra system. {I}. {T}he user language.
\newblock {\em J. Symbolic Comput.}, 24(3-4):235--265, 1997.

\bibitem{bgk}
T.~Breuer, R.~M. Guralnick, and W.~M. Kantor.
\newblock Probabilistic generation of finite simple groups. {II}.
\newblock {\em J. Algebra}, 320(2):443--494, 2008.

\bibitem{burnessspread}
T.~Burness, R.~Guralnick, and S.~Harper.
\newblock The spread of a finite group.
\newblock {\em Ann. of Math. (2)}, 193(2):619--687, 2021.

\bibitem{camerongraphs}
P.~J. Cameron.
\newblock Graphs defined on groups.
\newblock {\em Int. J. Group Theory}, 11(2):53--107, 2022.

\bibitem{nilppaper}
P.~J. Cameron, S.~D. Freedman, and C.~M. Roney-Dougal.
\newblock The non-commuting, non-generating graph of a nilpotent group.
\newblock {\em Electron. J. Combin.}, 28(1):Paper No. 1.16, 15, 2021.

\bibitem{cohn}
P.~M. Cohn.
\newblock {\em Basic algebra}.
\newblock Springer-Verlag London, Ltd., London, 2003.
\newblock Groups, rings and fields.

\bibitem{coxrinf}
C.~G. Cox.
\newblock A note on the {$R_\infty$} property for groups {$\mathrm
  {FAlt}(X)\leqslant G\leqslant\mathrm{Sym}(X)$}.
\newblock {\em Comm. Algebra}, 47(3):978--989, 2019.

\bibitem{cox}
C.~G. Cox.
\newblock On the spread of infinite groups.
\newblock {\em Proc. Edinb. Math. Soc. (2)}, 65(1):214--228, 2022.

\bibitem{crestani}
E.~Crestani and A.~Lucchini.
\newblock The non-isolated vertices in the generating graph of a direct powers
  of simple groups.
\newblock {\em J. Algebraic Combin.}, 37(2):249--263, 2013.

\bibitem{simplepaper}
S.~D. Freedman.
\newblock The non-commuting, non-generating graph of a finite simple group
  \textup{(}submitted\textup{)}.
\newblock arXiv:2212.01616.

\bibitem{saulthesis}
S.~D. Freedman.
\newblock {\em Diameters of graphs related to groups and base sizes of
  primitive groups}.
\newblock PhD thesis, University of St Andrews, May 2022.

\bibitem{hering}
C.~Hering.
\newblock Transitive linear groups and linear groups which contain irreducible
  subgroups of prime order.
\newblock {\em Geometriae Dedicata}, 2:425--460, 1974.

\bibitem{herstein}
I.~N. Herstein.
\newblock A remark on finite groups.
\newblock {\em Proc. Amer. Math. Soc.}, 9:255--257, 1958.

\bibitem{herzog}
M.~Herzog, P.~Longobardi, and M.~Maj.
\newblock On a graph related to the maximal subgroups of a group.
\newblock {\em Bull. Aust. Math. Soc.}, 81(2):317--328, 2010.

\bibitem{liebeckshalev2gen}
M.~W. Liebeck and A.~Shalev.
\newblock Simple groups, probabilistic methods, and a conjecture of {K}antor
  and {L}ubotzky.
\newblock {\em J. Algebra}, 184(1):31--57, 1996.

\bibitem{lucchinisoluble}
A.~Lucchini.
\newblock The diameter of the generating graph of a finite soluble group.
\newblock {\em J. Algebra}, 492:28--43, 2017.

\bibitem{lmrd}
A.~Lucchini, A.~Mar\'{o}ti, and C.~M. Roney-Dougal.
\newblock On the generating graph of a simple group.
\newblock {\em J. Aust. Math. Soc.}, 103(1):91--103, 2017.

\bibitem{ore}
O.~Ore.
\newblock Contributions to the theory of groups of finite order.
\newblock {\em Duke Math. J.}, 5(2):431--460, 1939.

\bibitem{shi}
W.~J. Shi.
\newblock Finite groups having at most two classes of maximal subgroups of the
  same order.
\newblock {\em Chinese Ann. Math. Ser. A}, 10(5):532--537, 1989.

\bibitem{smith}
S.~M. Smith.
\newblock A classification of primitive permutation groups with finite
  stabilizers.
\newblock {\em J. Algebra}, 432:12--21, 2015.

\bibitem{suzuki}
M.~Suzuki.
\newblock On a class of doubly transitive groups.
\newblock {\em Ann. of Math. (2)}, 75:105--145, 1962.

\end{thebibliography}

\end{document}